\documentclass[12pt]{article}
\usepackage{amssymb}
\usepackage{mathrsfs}
\usepackage{bm}
\usepackage{amsfonts}
\usepackage{graphicx}
\usepackage{setspace}
\usepackage{subfig}
\usepackage{indentfirst}
\usepackage{colortbl,dcolumn}
\usepackage{color}
\usepackage{comment}
\usepackage{xcolor}
\usepackage{amsmath}
\usepackage{bbm} 
\usepackage{dsfont} 
\usepackage{psfrag}
\usepackage{booktabs}
\usepackage{array}
\usepackage{cite}
\usepackage{amsthm}
\numberwithin{equation}{section}
\usepackage[top=1in, bottom=1in, left=0.8in, right=0.8in]{geometry}
\bibliographystyle{plain}

\newcommand{\R}{\mathbb{R}}
\newcommand{\N}{\mathbb{N}}
\newcommand{\E}{\mathbb{E}}
\renewcommand{\P}{\mathbb{P}}

\newtheorem{thm}{Theorem}[section]

\newtheorem{lem}[thm]{Lemma}
\newtheorem{prop}[thm]{Proposition}
\newtheorem{cor}[thm]{Corollary}
\newtheorem{rem}[thm]{Remark}
\newtheorem{example}[thm]{Example}
\newtheorem{assumption}[thm]{Assumption}
\allowdisplaybreaks[4] 

\begin{document}
\title{
Uniform-in-time weak error estimates of explicit full-discretization schemes for SPDEs with non-globally Lipschitz coefficients
}
                
\author{Yingsong Jiang,  Xiaojie Wang 
\thanks{
This work was supported by Natural Science Foundation of China (12471394, 12071488, 12371417).
E-mail addresses:
	x.j.wang7@csu.edu.cn (Corresponding author), yingsong@csu.edu.cn.  
} 
\\
\footnotesize School of Mathematics and Statistics, Hunan Research Center of the Basic Discipline for Analytical  
\\
\footnotesize Mathematics, HNP-LAMA, Central South University, Changsha 410083, Hunan, China
}
\maketitle

\begin{abstract}

This article is devoted to long-time weak approximations of stochastic partial differential equations (SPDEs) evolving in a bounded domain $\mathcal{D} \subset \mathbb{R}^d$, $d \leq 3$, with non-globally Lipschitz and possibly non-contractive coefficients.
Both the space-time white noise in one dimension $d=1$ and the trace-class noise in multiple dimensions $d=2,3$ are examined for the considered SPDEs. 
Based on a spectral Galerkin spatial semi-discretization, we propose a class of novel full-discretization schemes, which are explicit, easily implementable and preserve the ergodicity of the original dissipative SPDEs with possibly non-contractive coefficients. The uniform-in-time weak approximation errors are carefully analyzed in a low regularity and possibly non-contractive setting, with uniform-in-time weak convergence rates obtained. A key ingredient is to establish the uniform-in-time moment bounds (in $L^{4q-2}$-norm, $q \geq 1$) for the proposed fully discrete schemes in a super-linear setting. This is highly non-trivial due to the explicit full-discretization, the super-linearly growing coefficients and the low regularity of SPDEs. New arguments are elaborated to overcome the challenging problem, by fully exploiting a new contractive property of the semi-group in $L^{4q-2}$, the dissipativity of the nonlinearity and a particular benefit of the taming strategy. Numerical experiments are finally reported to verify the theoretical findings.

	\par 
	{\bf AMS subject classification:} {\rm\small 60H35, 60H15, 65C30.}	\\
	
	{\bf Keywords:} 
SPDEs with non-globally Lipschitz coefficients,
 explicit full discretization schemes,
 uniform-in-time moment bounds,
 uniform-in-time weak convergence rates,
 approximations of invariant measure

\end{abstract}

\section{Introduction}
Stochastic partial differential equations (SPDEs) have emerged as a class of mathematical models in various scientific areas, ranging from phase field dynamics to fluid mechanics. In the last decades, numerous works have been dedicated to strong and weak approximations of SPDEs over finite-time horizons (see, e.g., \cite{antonopoulou2021numerical,Yan2024IMA_posteriori,kruse2023bdf2,wang2014DCDSweak,becker2023strong,anton2020fully,andersson2016weak,CUI2021weak,Chen_Dang_Hong2024adaptive_IMA,wang2020efficient,Cai2021weak4ACE,Arnulf_strong4non_global_lips,Brehier_Allen_C,Feng_AC,MajeeProhl_AC,QiWang2019optimal,brehier2024SiamTV,zhang2025weak,HuangC_ShenJ2023Mathcomp}, to just mention a few).
Nevertheless, long-time approximations of SPDEs are of significant interest in many scenarios such as sampling from the invariant measure, and in this situation, the long-time convergence turns out to be indispensable.
As opposed to the finite-time approximation,
just a few works (e.g., \cite{brehier2014PA,brehier2022ESAIM,brehier2024FCM,wangyibo2024ACE,CUI2021weak}) analyzed long-time approximations of SPDEs, which is far from being well understood, particularly for SPDEs with superlinearly growing and non-contractive coefficients.

In this paper, we delve into long-time explicit approximations of parabolic SPDEs in the Hilbert space $H := L^2(\mathcal{D}; \R)$ of the form:
\begin{equation}
\label{eq:SEE(in_Introduction)[weak4AC-25]}
\left\{\begin{array}{l}
\mathrm{d} X(t)  = - A X(t) \, \mathrm{d} t + F(X(t)) \, \mathrm{d} t + \, \mathrm{d} W(t), 
\quad t > 0, \\
X(0) = X_0.
\end{array}\right.
\end{equation}
Here, $\mathcal{D} \subset \R^d$, $d \leq 3$ is a bounded spatial domain with smooth boundary, $-A$ is the Laplacian operator with homogeneous Dirichlet boundary conditions, $F$ is a nonlinear Nemytskii operator associated with a real-valued function $f \colon \R \rightarrow \R$, i.e., 
$
    F(u)(x) := f( u(x) ), 
    x \in \mathcal{D},
$
and $\{W(t)\}_{t \geq 0}$ is an $H$-valued (possibly cylindrical) $Q$-Wiener  process (see Assumptions \ref{assump:A(linear_operator)[weak4AC-25]}-\ref{assump:F(Nonlinearity)[weak4AC-25]} below for details).
%

When the nonlinear mapping $F$ is globally Lipschitz, Br{\'e}hier \cite{brehier2014PA} used 
the classic linear implicit Euler scheme to approximate the invariant measure of \eqref{eq:SEE(in_Introduction)[weak4AC-25]} with space-time white noise. The uniform-in-time weak convergence rates were also revealed there.
Very recently, a modified regularity-preserving Euler scheme was proposed and analyzed in \cite{brehier2024FCM} for similar problems in a globally Lipschitz setting, with total variation error bounds obtained.  For Allen-Cahn type SPDEs with polynomially growing $F$, the authors of \cite{CUI2021weak} used the (nonlinearity implicit) backward Euler method for long-time approximations of \eqref{eq:SEE(in_Introduction)[weak4AC-25]} and derived uniform-in-time weak convergence rates of the fully discrete schemes. Later on, an explicit tamed exponential Euler scheme was employed in \cite{brehier2022ESAIM} for approximation of the invariant distribution of SPDEs \eqref{eq:SEE(in_Introduction)[weak4AC-25]}, where error bounds have a polynomial dependence on the time length $T$.
This happened because uniform-in-time moment bounds cannot be derived for the tamed scheme proposed in \cite{brehier2022ESAIM}. In a more recent preprint \cite{wangyibo2024ACE}, the authors did not discretize the stochastic convolution in the temporal direction and proposed a kind of tamed accelerated exponential Euler method for weak approximations of SPDEs \eqref{eq:SEE(in_Introduction)[weak4AC-25]} in the one-dimensional case $d=1$. Uniform-in-time moment bounds  were derived there for the scheme, and uniform-in-time weak convergence rates were revealed in a contractive setting.

In this work, we propose a class of novel, explicit full-discretization schemes of exponential type for SPDEs \eqref{eq:SEE(in_Introduction)[weak4AC-25]} in dimensions $d \leq 3$.
Based on a spectral Galerkin spatial semi-discretization, the proposed exponential integrators (see Section \ref{sec:scheme[weak4AC-25]} for details) read as
\begin{equation}\label{eq:intro_full-discre[weak4AC-25]}
    X^{N,\tau}_{t_{m+1}}
    = 
    E_N(\tau) X^{N,\tau}_{t_m} 
    + 
    \tau E_N(\tau)
    P_N F_{\tau, N}
    \big( X^{N,\tau}_{t_m} \big)
    +
    E_N(\tau) P_N \Delta W_m,
    \quad
    X^{N,\tau}_{0} = P_N X_0,
\end{equation}
where
\begin{equation}
    F_{\tau,N}(u)(x)
        : =
        f_{\tau,N}( u(x) ), 
        \quad
        x \in \mathcal{D},
\end{equation}
with
$f_{\tau,N}\colon \R \rightarrow \R$ being
a modification of $f \colon \R \rightarrow \R$, suggested as follows:
\begin{equation}
       f_{\tau,N}( v )
       :=
       \frac{ f( v ) }
       { \Big(  1 
            + 
            (\beta_1 \tau^{\theta} +  \beta_2 \lambda_N^{-\rho} )
          |v|^{ \frac{2q-2}{\alpha} } \Big)^\alpha
        },
        \quad
         \alpha \in (0, 1], 
         \
         \theta,
       \rho,
      \beta_1, \beta_2 >0.
    \end{equation}
%
%
Here, $\tau$ is the uniform time step-size, $\lambda_N , N \in \N$ is the $N$-th eigenvalue of the linear operator $A$, and one can think of $f$ as a polynomial of odd degree $2q-1$, $q \in \N$ with a negative leading coefficient. 
Clearly, the proposed schemes are explicit and easily implementable.
Different from the existing taming schemes \cite{wang2020efficient,brehier2022ESAIM,wang2024IMA,liu-shen_2025geometric,HuangC_ShenJ2023Mathcomp}, the newly proposed scheme incorporates a new term $\beta_2 \lambda_N^{-\rho}$ in the taming factor of a flexible degree $\alpha$ (instead of a fixed degree $\tfrac12$ or $1$ in the literature). In this setting, $f_{\tau,N}(u)$ has a linear growth \eqref{eq:assume_|F_tau(u)|_lambda[weak4AC-25]}, i.e., 
$
| f_{\tau,N}(u) |
\leq
    \widetilde{c}_3
    ( 1 + |u|
      +
      \lambda_N^{ \alpha \rho }|u| ).
$
Then one can choose small $\alpha$ such that
$\alpha \rho < 1- \tfrac{d}{4} $, to ensure that the linear growth constant of $f_{\tau,N}(u)$, i.e., $\lambda_N^{ \alpha \rho }$, can be properly controlled even in multiple dimensions $d =2, 3$ (see uniform bounds of $\mathcal{R}^{N,\tau}$ in the proof of Theorem \ref{thm:X^N,tau(bound)[weak4AC-25]}). 
%
%
%
%
To carry out the long-time weak error analysis, one needs to establish the uniform-in-time moment bounds (in $L^{4q-2}$-norm, $q \geq 1$) for the proposed fully discrete schemes in the super-linear setting. This turns out to be highly non-trivial and challenging due to the explicit full-discretization and the low regularity of SPDEs \eqref{eq:SEE(in_Introduction)[weak4AC-25]} in multiple dimensions $d\leq 3$. By fully exploiting a contractive property of the semi-group in $L^{4q-2}$ (see Proposition \ref{prop:E_N(t)_contractivity[weak4AC-25]}), the dissipativity of the nonlinearity, and the particular benefit of the new taming strategy (e.g., \eqref{eq:assume_|F_tau(u)|_lambda[weak4AC-25]}), as mentioned before, new arguments are elaborated to obtain the desired uniform-in-time moment bounds of the proposed fully discrete schemes (see Theorem \ref{thm:X^N,tau(bound)[weak4AC-25]}), without any constraint on the time-space stepsize ratio.

Equipped with the uniform moment bounds, we are able to do the long-time weak error analysis without relying on the Malliavin Calculus and achieve the desired uniform-in-time weak convergence rates in both the spatial and temporal directions (Theorem \ref{thm:uniform_weakerror[weak4AC-25]}), for
SPDEs with both contractive and non-contractive coefficients (Assumption \ref{assume:contractive_or_non-degeneracy[weak4AC-25]}).
Both the space-time white noise in one dimension $d=1$ and the trace-class noise in multiple dimensions $d\leq 3$ are examined.
%
%
%
Finally, we show the geometric ergodicity of the proposed full-discretization schemes (Proposition \ref{prop:invariant_measure_approximation[weak4AC-25]}).

%
It is also worthwhile to mention that, just when the present manuscript was almost finished, we were aware of a preprint \cite{liu-shen_2025geometric} that proposed ergodic tamed time-stepping schemes for SPDEs with multiplicative and trace-class noises, where only finite-time moment bounds and finite-time strong convergence rates are proved for the schemes.

The remainder of this article is organized as follows. In the next section we introduce the considered SPDEs. 
Section \ref{sec:scheme[weak4AC-25]} establishes a framework for the explicit full-discretization schemes, with a specific scheme given as an example.
In Section \ref{sec:moment-boundedness[weak4AC-25]} we prove the uniform moment bounds for the proposed full-discretizations. In Section \ref{sec:convergence[weak4AC-25]}, the uniform-in-time weak convergence is established, with explicit convergence rates revealed.
Numerical results are reported in Section \ref{sec:numerical[weak4AC-25]} to validate the theoretical findings.
The last section gives some concluding remarks.

\section{The considered SPDEs}\label{sec:SPDE_considered[weak4AC-25]}
 
Let $\N$ be a set of positive integers and let $\N_0 := \N \cup \{ 0\}$.
By $C$ we denote a positive constant that might change at different occurrences.
Sometimes, we write $C(a_1,a_2,...,a_n)$ to show the dependence of parameters $a_1,a_2,..., a_n$.
For $a,b \in \R$, we denote $a \wedge b := \min\{ a,b\}$ and $a \vee b := \max\{ a,b\}$.
For a bounded domain $\mathcal{D} \subset \R^d$, $d \leq 3$,
we let $ L^r ( \mathcal{D}; \R), r \geq 1$ ($L^r (\mathcal{D})$ or $L^r$ for short) denote the Banach space consisting of $r$-times integrable functions,
endowed with the usual norms $\| \cdot \|_{ L^r }$.
By $H := L^2(\mathcal{D}; \R)$ we denote a real separable Hilbert space, endowed with the usual product $\langle \cdot, \cdot \rangle$ and the norm $\| \cdot \| := \langle \cdot, \cdot \rangle^\frac{1}{2}$. 
Let $\mathcal{L}(H)$ denote
the space of bounded linear operators from $H$ to $H$, endowed with the usual operator norm $\| \cdot \|_{\mathcal{L}(H)}$. 
 By $\mathcal{L}_2(H) \subset \mathcal{L}(H)$ ($\mathcal{L}_2$ for short), we denote the subspace consisting of all Hilbert-Schmidt operators from $H$ to $H$, which is also a separable Hilbert space, endowed with the scalar product $\langle \Gamma_1, \Gamma_2 \rangle_{\mathcal{L}_2(H)} := \sum_{n \in \N} \langle \Gamma_1 \eta_n, \Gamma_2 \eta_n \rangle$ and the norm $\| \Gamma \|_{\mathcal{L}_2(H)} 
 :=
 \left( \sum_{n \in \N}
 \| \Gamma  \eta_n \|^2 \right)^\frac{1}{2}$, independent of the choice of the orthogonal basis $\{ \eta_n\}_{n \in \N}$ of $H$. 
The Banach space consisting of continuous functions will be denoted by $V := C(\mathcal{D}, \R)$,
endowed with the usual norm $\| \cdot \|_V$.
Let $\mathcal{C}_b^k(H), k \in \N$ denote
the space consisting of mappings from $H$ to $\R$, having bounded and uniformly continuous derivatives, up to the $k$-th order.

In this article we focus on the parabolic SPDE in the Hilbert space $H$:
\begin{equation}\label{eq:considered_SEE[weak4AC-25]}
\left\{\begin{array}{l}
\,\mathrm{d} X(t)  = - A X(t) \,\mathrm{d} t + F(X(t)) \,\mathrm{d} t + \,\mathrm{d} W(t), 
\quad t > 0, \\
X(0) = X_0.
\end{array}\right.
\end{equation}
Here the operators $A,F$, the noise process $W$ and the initial value $X_0$ satisfy the following assumptions.


\begin{assumption}[Linear Operator $A$]\label{assump:A(linear_operator)[weak4AC-25]}
Let $\mathcal{D} \subset \R^d$, $d \leq 3$, be a bounded spatial domain with smooth boundary.
Let $-A \colon Dom(A) \subset H \rightarrow H$ be the Laplacian on $\mathcal{D}$ with homogeneous Dirichlet boundary conditions, i.e., $-Au = \Delta u$, 
$u \in Dom(A) := H^2(\mathcal{D}) \cap H^1_0(\mathcal{D})  $.
\end{assumption}

Assumption \ref{assump:A(linear_operator)[weak4AC-25]} implies the existence of
an eigensystem
$\left\{\lambda_j, e_j\right\}_{j \in \mathbb{N}}$ in $H$ satisfying
$ A e_j = \lambda_j e_j$,
with
$\left\{ \lambda_j \right\}_{j \in \mathbb{N}}$ being an increasing sequence such that $\lim_{j \rightarrow \infty} \lambda_j = \infty$.
Moreover, $-A$ generates an analytic and contractive semi-group, denoted by 
$ E(t):=e^{-At},t \geq 0$.
By means of the spectral decomposition, we define the fractional powers of $A$, i.e., $A^\vartheta$ for $\vartheta \in \mathbb{R}$ \cite[Appendix B.2]{Kruse2014}. 
Denote the interpolation spaces by $\dot{H}^\vartheta := Dom( A^{\frac{\vartheta}{2}} ), \vartheta \in \mathbb{R}$, which are separable Hilbert spaces equipped with the inner product 
$\langle \cdot, \cdot \rangle_\vartheta := \langle A^{\frac{\vartheta}{2}} \cdot, A^{\frac{\vartheta}{2}} \cdot \rangle$ 
and the norm
$\| \cdot \|_\vartheta := \| A^{\frac{\vartheta}{2}} \cdot \| = \langle \cdot, \cdot \rangle_\vartheta^{1/2}$. 
The following regularity properties are well-known (see e.g. \cite{Pazy1983}):
for any $t>0, \vartheta \geq 0, \varsigma \in[0,1]$,
\begin{equation}\label{eq:E(t)_semigroup_property[weak4AC-25]}
    \left\|E(t) \right\|_{\mathscr{L}(H)} 
    \leq 
    e^{-\lambda_1 t}
,
\quad
    \left\| A^{\vartheta} E(t) \right\|_{\mathscr{L}(H)} 
    \leq 
    C t^{-\vartheta}
, 
\quad
    \left\| A^{-\varsigma} (I - E(t)) \right\|_{\mathscr{L}(H)} 
    \leq 
    C t^{\varsigma}
.
\end{equation}


\begin{assumption}[Noise Process]\label{assump:W(noise)[weak4AC-25]}
Let $ \{ W(t) \}_{ t \geq 0 } $ be an $H$-valued (possibly cylindrical) $Q$-Wiener process on the
stochastic basis $\left( \Omega, \mathcal{F}, \P, \left\{ \mathcal{F}_t \right\}_{ t \geq 0 } \right)$. Let the covariance operator $Q \in \mathcal{L}(H)$ be a bounded, self-adjoint,  positive semi-definite operator, satisfying that
\begin{equation}
 \big\| A^{ \frac{\gamma-1}{2} } 
         Q^{\frac{1}{2}}
\big\|_{ \mathscr{L}_2(H) }
< 
  \infty,
\quad
\text{ for some } \gamma \in (0 , 2 ].
\end{equation}
Moreover, assume that $A$ commutes with $Q$ in the case that $\gamma \leq \frac{d}{2}$.
\end{assumption}
%


\begin{assumption}[Nonlinearity]\label{assump:F(Nonlinearity)[weak4AC-25]}
Let $q \in [1, \tfrac{4 + 3d}{2d})$ be an integer for $d=1,2,3$
and let $F\colon L^{4 q-2}(\mathcal{D}) \rightarrow H$ be a nonlinear Nemytskii operator given by 
\begin{equation}
    F(u)(x) := f( u(x) ), 
    \quad
    x \in \mathcal{D},
\end{equation}
where $f (v) = -c_f v^{2q-1} + f_0(v), v \in \R$ with $c_f >0$ and $f_0 \colon \R \rightarrow \R$ being twice differentiable such that $|f_0(v)| \leq C( 1 + |v|^{2q-2}), v \in \R$. Moreover, there exist constants $L_f \in \R$ and $R_f, c_0, c_1, c_2, c_3, c_4 >0$ such that,
for all $u, v \in \R$,
\begin{align}
 f'(u) 
  & \leq
  L_f ,
\\
  |f'(u)|
  \vee
  |f''(u)|
  & \leq
  R_f ( 1 + |u|^{2q-2} ),
\\
\label{eq:asuume_(u+v)f(u)[weak4AC-25]}
   (  u + v ) 
    f( u )
   & \leq
    - c_0
     |u|^{2q}
    + c_1
     |v|^{2q}
    + c_2,
\\
\label{eq:asuume_|f(u)-f(v)|[weak4AC-25]}
  | f( u) - f( v ) |
   & \leq
   c_3
   ( |u|^{2q-2} + |v|^{2q-2} ) | u - v |
   + 
   c_4 
   | u - v |.
\end{align}
\end{assumption}
Obviously, setting $v=0$ in \eqref{eq:asuume_|f(u)-f(v)|[weak4AC-25]} leads to 
that for all $u \in \R$, 
\begin{equation}\label{eq:asuume_|f(u)|[weak4AC-25]}
    |f(u)| 
    \leq
      c_3 |u|^{2q-1} 
      +
      c_4 |u|
      +
      c_5 ,
\end{equation}
for some constant $c_5 := |f(0)| \geq 0$.
A typical example of the nonlinearity $f$ satisfying Assumption \ref{assump:F(Nonlinearity)[weak4AC-25]} is 
$f(u) := a_0 + a_1 u + a_2 u^2 + a_3 u^3$
    with $a_3 < 0, a_0,a_1,a_2 \in \R$.
Such SPDEs are called stochastic Allen–Cahn equations in the literature \cite{wang2020efficient,Cai2021weak4ACE,CUI2021weak,Brehier_Allen_C,QiWang2019optimal,HuangC_ShenJ2023Mathcomp}.
Moreover, for any 
$\upsilon, \psi, \psi_1, \psi_2 \in L^{4q-2}(\mathcal{D})$,
we define
\begin{equation}
    \big(
    F'(\upsilon)(\psi)
    \big)
    (x)
    :=
    f'( \upsilon(x)  )
    \psi(x),
    \quad
    \big(
    F''(\upsilon)(\psi_1, \psi_2)
    \big)
    (x)
    :=
    f''( \upsilon(x)  )
    \psi_1(x)
    \psi_2(x)
    ,
    \quad
    x \in \mathcal{D}.
\end{equation}

\begin{assumption}[Initial value]\label{assump:X_0(Initial Value)[weak4AC-25]}
    Let the initial value $ X_0\colon \Omega \rightarrow H $ be an $ \mathcal{F}_0 / \mathcal{B}(H) $-measurable random variable and let $ \gamma $ be determined by Assumption \ref{assump:W(noise)[weak4AC-25]}.
    For any $ p \geq 1 $ and for some
    $\varrho > \frac{ d }{2} $, there exists constant $ C(\gamma,\varrho,p)>0 $ depending on $ \gamma, \varrho, p $ such that
\begin{equation}
    \| X_0 \|_{ L^p( \Omega, \dot{H}^{\gamma \vee \varrho ) } } 
    \leq
     C(\gamma,\varrho, p) 
    < 
    \infty.   
\end{equation}
    
\end{assumption}

Under all the assumptions stated above, the well-posedness of the SPDE \eqref{eq:considered_SEE[weak4AC-25]} is established as follows (see, e.g., \cite[Chapter 6]{cerrai2001second} or \cite[Theorem 3.5]{wang2024IMA}).

\begin{thm}
    Suppose Assumptions \ref{assump:A(linear_operator)[weak4AC-25]}-\ref{assump:X_0(Initial Value)[weak4AC-25]} are satisfied. Then, the SPDE \eqref{eq:considered_SEE[weak4AC-25]} admits a unique mild solution $\{ X(t) \}_{t \geq 0}$ with continuous sample paths defined by
\begin{equation}\label{eq:spde[weak4AC-25]}
        X(t) = E(t) X_0
               +
               \int_0^t E(t-s) F( X(s) ) \,\mathrm{d}s 
               +
               \int_0^t E(t-s) \,\mathrm{d}W(s),
               \ 
               t \geq 0,
        \quad
        \mathbb{P} \text{-a.s.}.
    \end{equation}
\end{thm}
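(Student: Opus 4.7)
The plan is to reduce \eqref{eq:considered_SEE[weak4AC-25]} to a random PDE by peeling off the stochastic convolution and then to treat the resulting pathwise equation via a truncation plus dissipativity argument. Set $Z(t) := \int_0^t E(t-s)\,\mathrm{d}W(s)$ and seek $X$ in the form $X(t) = Y(t) + Z(t)$, where $Y$ formally obeys
\begin{equation*}
    Y'(t) + A Y(t) = F(Y(t) + Z(t)),
    \quad
    Y(0) = X_0,
\end{equation*}
pathwise in $\omega$. Since $Y$ is expected to be strictly more regular than $X$, all nonlinear estimates can be carried out in a strong norm, while the rough contribution $Z$ enters only through the argument of $F$.

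First I would verify that $Z$ admits a modification with continuous paths taking values in $V$ (or at worst in $L^{4q-2}(\mathcal{D})$) and with finite $L^p(\Omega)$-moments on every bounded time interval. The factorisation method applied to Assumption \ref{assump:W(noise)[weak4AC-25]} yields a modification of $Z$ with paths in $\dot{H}^{\gamma-\epsilon}$ for small $\epsilon>0$; Sobolev embedding when $\gamma > d/2$, or the Kolmogorov continuity theorem applied directly to the Gaussian field $Z$ when $\gamma \le d/2$ (using the commutativity of $A$ and $Q$ to diagonalise the covariance), delivers the required $V$-regularity. Combined with Assumption \ref{assump:X_0(Initial Value)[weak4AC-25]}, this places $X_0$ and $Z(t)$ in $V$ almost surely, so that $F(Y+Z)$ is well defined along any candidate path $Y \in C([0,T];V)$.

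Next I would build a mild solution of the random PDE by truncating $F$. For each $R > 0$, let $F_R \colon V \to H$ be a globally Lipschitz truncation coinciding with $F$ on the $V$-ball of radius $R$; the local Lipschitz bound \eqref{eq:asuume_|f(u)-f(v)|[weak4AC-25]} guarantees such a truncation exists. The standard Banach contraction argument on $C([0,T];V)$, exploiting $\|E(t)\|_{\mathcal{L}(V)} \le 1$ by the maximum principle, then produces a unique mild solution $Y_R$ on any finite interval. The heart of the argument, and the step I expect to be the main obstacle, is to derive a priori bounds on $Y_R$ that are independent of $R$, so that the truncation is never active. I would first project onto a spectral Galerkin subspace to legitimize the chain rule, test the finite-dimensional equation against $|Y_{R,N}|^{2q-2} Y_{R,N}$, and invoke the dissipativity \eqref{eq:asuume_(u+v)f(u)[weak4AC-25]} with $u = Y_{R,N}$ and $v = Z_N$ to dominate the nonlinear term by $-c_0 \|Y_{R,N}\|_{L^{2q}}^{2q}$ plus a quantity depending only on $Z_N$. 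Young's inequality and Gronwall then yield a pathwise bound in $L^\infty(0,T; L^{2q}(\mathcal{D}))$; parabolic bootstrapping via the smoothing of $E(t)$ in the mild formula, combined with the continuity of $Z$ in $V$, upgrades this to a uniform $V$-bound. Passing $N \to \infty$ by compactness and taking $R$ large enough removes both approximations.

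Finally, uniqueness and continuity of sample paths follow from the one-sided Lipschitz bound $f'(u) \le L_f$: the difference of two mild solutions, after an $H$-energy estimate, satisfies a linear Gronwall inequality and therefore vanishes identically. Setting $X := Y + Z$ then yields the unique $H$-continuous mild solution of \eqref{eq:considered_SEE[weak4AC-25]}. The delicate point throughout is the rigorous justification of the energy identities in the low-regularity setting $d = 2, 3$; this is handled by proving everything first for the spectral Galerkin approximations, where finite-dimensional Itô calculus is available, and then passing to the limit using the uniform estimates, in the spirit of \cite{cerrai2001second}.
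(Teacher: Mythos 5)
The paper does not actually prove this theorem: it only cites \cite[Chapter 6]{cerrai2001second} and \cite[Theorem 3.5]{wang2024IMA}, and your sketch is essentially the standard argument carried out in those references (peeling off the stochastic convolution, truncation plus dissipativity, spectral Galerkin approximation, one-sided Lipschitz uniqueness), so it is the same approach in substance. One imprecision worth fixing: when you test the Galerkin equation against $|Y_{R,N}|^{2q-2}Y_{R,N}$, the inequality \eqref{eq:asuume_(u+v)f(u)[weak4AC-25]} with $u=Y_{R,N}$, $v=Z_N$ controls $(Y+Z)f(Y)$ rather than the term $|Y|^{2q-2}Y\,f(Y+Z)$ that actually appears; the choice $u=Y+Z$, $v=-Z$ only covers the $L^2$-energy test, and for the $L^{2q}$-estimate you should instead invoke the explicit polynomial structure $f(v)=-c_f v^{2q-1}+f_0(v)$ from Assumption \ref{assump:F(Nonlinearity)[weak4AC-25]} to absorb the cross terms --- this is routine but the index bookkeeping as written does not match.
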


\section{The proposed explicit full-discretization schemes}\label{sec:scheme[weak4AC-25]}

To numerically solve the SPDE \eqref{eq:spde[weak4AC-25]}, we rely on the spectral Galerkin method for the spatial semi-discretization, based on which a class of explicit time-stepping schemes are introduced to get the space-time full discretization. 

\subsection{A general framework for full-discretization schemes}

We first approximate the underlying problem \eqref{eq:spde[weak4AC-25]} spatially, using
the spectral Galerkin method. For $N \in \mathbb N$, by spanning the $N$ first eigenvectors of the dominant linear operator $A$, we define a finite-dimensional linear subspace $H^N \subset H$ by
$ H^N := span \left\{ e_1, e_2, ..., e_N \right\}$, and the projection operator
$P_N \colon \dot H^\vartheta \rightarrow H^N$
by
$
    P_N \xi : = \sum_{i = 1}^N \langle \xi, e_i \rangle e_i, 
    \
    \xi \in \dot H^\vartheta , \vartheta \in \mathbb{R}.
$
It is easy to show
\begin{equation}\label{eq:P_N(regularity)[weak4AC-25]}
    \| (P_N - I) \xi \| 
    \leq 
    \lambda_{N+1}^{-\frac{\vartheta}{2}} 
    \| \xi \|_\vartheta , 
    \quad  
    \forall \ \xi \in \dot H^\vartheta , 
    \vartheta \geq 0.
\end{equation}
Then the spectral Galerkin approximation of (\ref{eq:considered_SEE[weak4AC-25]}) leads to the following finite-dimensional stochastic differential equations (SDEs) in $H^N$:
\begin{equation}\label{eq:spectral-25AC}
\left\{\begin{array}{l}
\,\mathrm{d} X^N(t) 
= 
 - A_N X^N(t) \,\mathrm{d} t 
 + P_N F(X^N(t)) \,\mathrm{d} t 
 + P_N \,\mathrm{d} W(t), 
 \quad t > 0, 
 \\
 X^N(0) = P_N X_0,
\end{array}\right.
\end{equation}
where we define
$A_N :=  A P_N $, generating an analytic semi-group in $H^N$, denoted by $E_N(t) := e^{-t A_N}, t \in [0, \infty)$. 
Then the unique solution of \eqref{eq:spectral-25AC} is given by
\begin{equation}\label{eq:spatial_disc[weak4AC-25]}
    X^N(t)
    =
    E_N(t) P_N X_0
    +
    \int_0^t E_N(t-r) P_N F( X^N(r) ) \,\mathrm{d}r
    +
    \int_0^t E_N(t-r) P_N \,\mathrm{d}W(r),
    \quad
    t \geq 0.
\end{equation}

Based on the spatial semi-discretization \eqref{eq:spatial_disc[weak4AC-25]}, 
we now propose an explicit full-discretization schemes as follows:
\begin{equation}\label{eq:full_discretization[weak4AC-25]}
    X^{N,\tau}_{t_{m+1}}
    = 
    E_N(\tau) X^{N,\tau}_{t_m} 
    + 
    \tau E_N(\tau)
    P_N F_{\tau,N}
    \big( X^{N,\tau}_{t_m} \big)
    +
    E_N(\tau) P_N \Delta W_m,
    \quad
    X_0^{N,\tau} = P_N X_0 =: X_0^N,
\end{equation}
where
we denote $t_m := m \tau$ for the uniform stepsize $\tau > 0 $, $\Delta W_m := W(t_{m+1}) - W(t_m)$, $ m \in \N_0$,
and
$F_{\tau,N}\colon L^{4q-2}(\mathcal{D}) \rightarrow H$
is given by 
\begin{equation}
    F_{\tau,N}(u)(x)
        : =
        f_{\tau,N}( u(x) ), 
        \quad
        x \in \mathcal{D}.
\end{equation}
Here
$f_{\tau,N}\colon \R \rightarrow \R$ is
a modification of the mapping $f$ satisfying Assumption \ref{assump:F_N,tau[weak4AC-25]} below.
By iteration, the full-discretization schemes \eqref{eq:full_discretization[weak4AC-25]} can be also rewritten as
\begin{align}\label{eq:full-dis_sum[weak4AC-25]}
     X^{N,\tau}_{t_{m}}
     =
      E_N( t_{m} ) X^{N,\tau}_{0}
      +
      \tau
      \sum_{k=0}^{m-1}
      E_N(t_{m} - t_k)
      P_N
      F_{\tau,N}
     \big( X^{N,\tau}_{t_k} \big)
     +
     \mathcal{O}^{N,\tau}_{t_{m}},
     \quad
     X^{N,\tau}_{0} = P_N X_0,
\end{align}
for $m \in \N_0$, where we denote 
the discretized version of stochastic convolution by
\begin{align}
   \label{eq:O^N,tau[weak4AC-25]}
    \mathcal{O}^{N,\tau}_{t_m} 
   & :=
         \sum_{k=0}^{m-1} E_N( t_{m} - t_{k} ) P_N \Delta W_k,
    \quad 
    m \in \N_0.
\end{align}
%
%

\begin{assumption}\label{assump:F_N,tau[weak4AC-25]}
    Let Assumption \ref{assump:A(linear_operator)[weak4AC-25]}, \ref{assump:F(Nonlinearity)[weak4AC-25]} be fulfilled.
%
Let $\lambda_N , N \in \N$ be the $N$-th eigenvalue of linear operator $A$.
Then there exists a constant $\tau^* \in (0, \infty)$ such that for $0< \tau \leq \tau^*$, 
    the modified function $f_{\tau,N}\colon \R \rightarrow \R$ satisfies the following conditions: for some $\theta > 0$ and $\rho >0$, $\alpha \in [0,1]$ satisfying $\alpha \rho < 1- \tfrac{d}{4} $,
    there exist constants $\widetilde{c}_0, \widetilde{c}_1, \widetilde{c}_2, \widetilde{c}_3,\widetilde{c}_4,
\widetilde{l}> 0$ independent of $\tau$ and $\lambda_N $, such that for any $u,v \in \R$,
    \begin{align}
    \label{eq:assume_(u+v)f_tau(u)[weak4AC-25]}
    2 
    ( u + v ) f_{\tau,N}(u) 
    +
    \tau
    | f_{\tau,N}(u) |^2
  &  \leq 
    - 2 \widetilde{c}_0 |u|^2 
    +
    2 \widetilde{c}_1 ( 1 + |v|^{2q } 
    ),  
    \\ 
    \label{eq:assume_|F_tau(u)|[weak4AC-25]}
    | f_{\tau,N}(u) |
   & \leq
    \widetilde{c}_2 
    |f(u)|,
    \\
\label{eq:assume_|F_tau(u)|_lambda[weak4AC-25]}
    | f_{\tau,N}(u) |
   & \leq
    \widetilde{c}_3
    ( 1 + |u|
      +
      \lambda_N^{ \alpha \rho }|u| )  ,
    \\
    \label{eq:assume_|F_tau(u)-F(u)|[weak4AC-25]}
    | f_{\tau,N}(u) - f(u) |
  &  \leq
    \widetilde{c}_4
    (\tau^{\theta} + \lambda_N^{-\rho})
     ( 1 + |u|^{ \widetilde{l} } ) |f(u)|.
    \end{align}
\end{assumption}

\begin{rem}
It is worthwhile to point out that, the last condition quantifies the discrepancy between the original $f$ and the modification $f_{\tau,N}$,
where the parameters $\theta$ and $\rho$ would put upper limits of convergence rates in time and space, respectively.
Moreover, the requirement $\alpha \rho < 1- \tfrac{d}{4} $ is essentially used to control
the linear growth constant of $f_{\tau,N}(u)$, i.e., $\lambda_N^{ \alpha \rho }$,
even in multiple dimensions $d =2, 3$ (see uniform bounds of $\mathcal{R}^{N,\tau}$ in the proof of Theorem \ref{thm:X^N,tau(bound)[weak4AC-25]}). Such a condition can be fulfilled by choosing a flexible degree $\alpha < (1- \tfrac{d}{4})/ \rho$ sufficiently small (see the concrete example \eqref{eq:example_f_tau[weak4AC-25]} below).
This
differs from existing tamed
schemes such as \cite{wang2020efficient,brehier2022ESAIM,wang2024IMA,liu-shen_2025geometric,Cai2021weak4ACE,HuangC_ShenJ2023Mathcomp} for SPDEs and \cite{angeli2023uniform,hutzenthaler2012strong,neufeld2025non,sabanis2016euler} for SODEs, where fixed degrees such as $\alpha = \tfrac12$ or $\alpha = 1$ were used. 

\end{rem}

In the next subsection, we give a concrete example of $f_{\tau,N}$ such that all conditions in Assumption \ref{assump:F_N,tau[weak4AC-25]} are satisfied.

\subsection{Concrete full-discretization schemes}

Now we present an example of $f_{\tau,N}$ fulfilling Assumption \ref{assump:F_N,tau[weak4AC-25]}. 
%
%

\vspace{2.5mm}

\noindent\textbf{A concrete example of $f_{\tau,N}$.}

\vspace{2.5mm}

We let $\alpha =0$ for the case $q=1$, and $\alpha \in (0,1]$ for $q > 1$. 
Let $f_{\tau,N}: \R \rightarrow \R$ be defined by
\begin{equation}\label{eq:example_f_tau[weak4AC-25]}
       f_{\tau,N}( u )
       :=
       \frac{ f( u ) }
       { \Big(  1 
            + 
            (\beta_1 \tau^{\theta} +  \beta_2 \lambda_N^{-\rho} )
          |u|^{ \frac{2q-2}{\alpha} } \Big)^\alpha
        },
        \quad
        u \in \R,
    \end{equation}
where the parameters
$
       \theta,
       \rho,
      \beta_1, \beta_2 >0$ and $\alpha \rho < 1- \tfrac{d}{4} $.
Here, $\lambda_N , N \in \N$ is the $N$-th eigenvalue of the linear operator $A$ and define $\tfrac{0}{0} :=1$.

We mention that when
$q=1$ (i.e., $f$ is globally Lipschitz), we take
$\alpha=0$ and 
the underlying scheme reduces to the standard exponential Euler scheme \cite{wang2014DCDSweak}. 
When $\alpha=\tfrac12$, $\beta_2=0$, the underlying scheme is similar to that proposed by \cite{neufeld2025non}.
For $\alpha=1$, $\beta_2=0$, the underlying scheme coincides with those in \cite{sabanis2016euler,angeli2023uniform}. Distinct from these works, here the conditions $\beta_1, \beta_2>0$ and $0 < \alpha \rho < 1- \tfrac{d}{4} $ will be essentially used later for the superlinear case $q>1$ and thus $\alpha$ should be small (less than $\tfrac12$) in multiple dimensions $d=2,3$ in order to achieve the uniform-in-time moment bounds and the desired weak convergence rates. 

\begin{prop}\label{prop:concrete_f_tau[weak4AC-25]}
        Let Assumption \ref{assump:A(linear_operator)[weak4AC-25]}, \ref{assump:F(Nonlinearity)[weak4AC-25]} hold.
        Let $N \in \N$ and $ 0< \tau \leq \tau^*$
        for some constant $\tau^* \in (0, \infty)$.
Moreover, we
assume
\begin{equation}\label{eq:assume_condition_beta1[weak4AC-25]}
    2(c_3+ \mathbbm{1}_{ \{q=1\} } c_4  )^2 \tau^{ 1-\theta\alpha } \leq c_0 \beta_1^{\alpha},
\end{equation}
where the parameters $\beta_1, \theta, \alpha$ come from \eqref{eq:example_f_tau[weak4AC-25]} and $c_0,c_3,c_4$ are from Assumption \ref{assump:F(Nonlinearity)[weak4AC-25]}.
Then $f_{\tau,N}$ determined by \eqref{eq:example_f_tau[weak4AC-25]} satisfies Assumption \ref{assump:F_N,tau[weak4AC-25]}.
        
\end{prop}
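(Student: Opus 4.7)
The plan is to verify the four bounds \eqref{eq:assume_(u+v)f_tau(u)[weak4AC-25]}--\eqref{eq:assume_|F_tau(u)-F(u)|[weak4AC-25]} of Assumption \ref{assump:F_N,tau[weak4AC-25]} one by one for the concrete choice \eqref{eq:example_f_tau[weak4AC-25]}, which I write compactly as $f_{\tau,N}(u) = \phi_{\tau,N}(u) f(u)$ with taming factor
\[
\phi_{\tau,N}(u) := \big(1 + (\beta_1\tau^\theta + \beta_2\lambda_N^{-\rho})|u|^{(2q-2)/\alpha}\big)^{-\alpha} \in (0,1].
\]
The trivial case $q = 1$, $\alpha = 0$ gives $f_{\tau,N} = f$ and all four conditions follow directly from Assumption \ref{assump:F(Nonlinearity)[weak4AC-25]} (the smallness of $\tau$ in \eqref{eq:assume_condition_beta1[weak4AC-25]} being what controls $\tau|f(u)|^2$), so I focus on $q > 1$, $\alpha \in (0,1]$.

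The three growth-type conditions \eqref{eq:assume_|F_tau(u)|[weak4AC-25]}, \eqref{eq:assume_|F_tau(u)|_lambda[weak4AC-25]} and \eqref{eq:assume_|F_tau(u)-F(u)|[weak4AC-25]} I would dispose of quickly. The first holds with $\widetilde{c}_2 = 1$ because $\phi_{\tau,N}(u) \leq 1$. For \eqref{eq:assume_|F_tau(u)|_lambda[weak4AC-25]}, the elementary inequality $(1+y)^\alpha \geq y^\alpha$ for $y \geq 0$, $\alpha \in (0,1]$ gives $\phi_{\tau,N}(u)|u|^{2q-2} \leq (\beta_1\tau^\theta + \beta_2\lambda_N^{-\rho})^{-\alpha}$, and dropping the $\beta_1\tau^\theta$ summand yields $(\beta_1\tau^\theta + \beta_2\lambda_N^{-\rho})^{-\alpha} \leq \beta_2^{-\alpha}\lambda_N^{\alpha\rho}$; combining with $|f(u)| \leq c_3|u|^{2q-1} + c_4|u| + c_5$ from \eqref{eq:asuume_|f(u)|[weak4AC-25]} delivers $|f_{\tau,N}(u)| \leq \widetilde{c}_3(1 + |u| + \lambda_N^{\alpha\rho}|u|)$. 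For \eqref{eq:assume_|F_tau(u)-F(u)|[weak4AC-25]}, Bernoulli's inequality $(1+y)^\alpha \leq 1 + \alpha y$ yields $1 - \phi_{\tau,N}(u) \leq \alpha(\beta_1\tau^\theta + \beta_2\lambda_N^{-\rho})|u|^{(2q-2)/\alpha}$, and bounding $\beta_1\tau^\theta + \beta_2\lambda_N^{-\rho} \leq (\beta_1 \vee \beta_2)(\tau^\theta + \lambda_N^{-\rho})$ gives the required estimate with $\widetilde{l} = (2q-2)/\alpha$.

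The coercive estimate \eqref{eq:assume_(u+v)f_tau(u)[weak4AC-25]} is the principal obstacle, as it has to balance the drift dissipativity against the quadratic term created by the explicit discretization. My strategy has three steps. (i) Multiplying the dissipativity \eqref{eq:asuume_(u+v)f(u)[weak4AC-25]} by the nonnegative factor $\phi_{\tau,N}(u)$ yields $2(u+v)f_{\tau,N}(u) \leq -2c_0\phi_{\tau,N}(u)|u|^{2q} + 2c_1|v|^{2q} + 2c_2$. (ii) Using $|f(u)|^2 \leq 2c_3^2|u|^{4q-2} + 2(c_4|u|+c_5)^2$ together with the key taming identity
\[
\phi_{\tau,N}(u)^2 |u|^{4q-2} \leq (\beta_1\tau^\theta + \beta_2\lambda_N^{-\rho})^{-\alpha}\phi_{\tau,N}(u)|u|^{2q} \leq \beta_1^{-\alpha}\tau^{-\theta\alpha}\phi_{\tau,N}(u)|u|^{2q},
\]
the standing hypothesis \eqref{eq:assume_condition_beta1[weak4AC-25]} forces $\tau|f_{\tau,N}(u)|^2 \leq c_0\phi_{\tau,N}(u)|u|^{2q} + C\tau|u|^2 + C\tau$. (iii) Summing (i) and (ii) leaves the net $-c_0\phi_{\tau,N}(u)|u|^{2q}$, and the crux is to convert it into $-2\widetilde{c}_0|u|^2$ via the auxiliary estimate $\phi_{\tau,N}(u)|u|^{2q} \geq c|u|^2 - C$. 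This follows from the elementary lower bound
\[
\phi_{\tau,N}(u) \geq \tfrac{1}{2}\min\big(1,\ [(\beta_1\tau^\theta + \beta_2\lambda_N^{-\rho})^\alpha |u|^{2q-2}]^{-1}\big),
\]
combined with the uniform upper bound $\beta_1\tau^\theta + \beta_2\lambda_N^{-\rho} \leq \beta_1(\tau^*)^\theta + \beta_2\lambda_1^{-\rho}$ inherited from $\tau \leq \tau^*$ and $\lambda_N \geq \lambda_1$, followed by a dichotomy on $|u| \leq 1$ versus $|u| > 1$. The residual $C\tau|u|^2$ is then absorbed into $-\widetilde{c}_0|u|^2$ after halving $\widetilde{c}_0$, provided $\tau^*$ is chosen sufficiently small.
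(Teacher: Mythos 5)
Your proposal is correct, and the verifications of \eqref{eq:assume_|F_tau(u)|[weak4AC-25]}--\eqref{eq:assume_|F_tau(u)-F(u)|[weak4AC-25]} coincide with the paper's up to cosmetic choices (you invoke Bernoulli's inequality where the paper bounds $|\Theta'|$ by the mean value theorem, and you use $(1+y)^\alpha\geq y^\alpha$ where the paper uses $(1+x)^\alpha\geq 2^{\alpha-1}(1+x^\alpha)$). Where you genuinely diverge is in the coercivity condition \eqref{eq:assume_(u+v)f_tau(u)[weak4AC-25]}. The paper combines numerator and denominator into the single auxiliary function $\Upsilon$ and runs a calculus argument: it computes $\Upsilon'$, reduces its sign to that of $\Lambda(y)=y^{1/\alpha}-y-1$, locates the unique sign change, and concludes that $\sup_{x\geq 1}\Upsilon(x)$ is attained at $\Upsilon(1)$ or at the limit $x\to\infty$, which directly produces a clean negative constant multiplying $|u|^2$. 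You instead isolate the two algebraic facts doing the real work — the taming identity $\phi_{\tau,N}(u)^2|u|^{4q-2}\leq(\beta_1\tau^\theta+\beta_2\lambda_N^{-\rho})^{-\alpha}\phi_{\tau,N}(u)|u|^{2q}$, which together with \eqref{eq:assume_condition_beta1[weak4AC-25]} absorbs the quadratic term, and the elementary lower bound $\phi_{\tau,N}(u)\geq\tfrac12\min(1,z^{-\alpha})$ giving $\phi_{\tau,N}(u)|u|^{2q}\geq c|u|^2-C$ — so no differentiation is needed. The trade-off is that your residual $C\tau|u|^2$ (coming from keeping $(c_4|u|+c_5)^2$ as a quadratic rather than Young-absorbing it into $\epsilon|u|^{4q-2}+\hat c_5$ as the paper does for $q>1$) forces an additional smallness requirement on $\tau^*$ at the very end; this is admissible, since Assumption \ref{assump:F_N,tau[weak4AC-25]} only asks for the existence of some $\tau^*$, but the paper's version works with $\tau^*=1$ subject only to \eqref{eq:assume_condition_beta1[weak4AC-25]}. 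If you wanted to remove that extra constraint, applying Young's inequality to push the $|u|^2$ contribution of $(c_4|u|+c_5)^2$ into an arbitrarily small multiple of $|u|^{4q-2}$ before invoking the taming identity would do it.
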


\begin{proof}
In what follows, we attempt to validate all conditions in Assumption \ref{assump:F_N,tau[weak4AC-25]} one by one.
\begin{itemize}
    \item Verification of \eqref{eq:assume_(u+v)f_tau(u)[weak4AC-25]}.
\end{itemize} 
In the case that $q=1$, we take $\alpha=0$ and thus $f_{\tau,N} = f$. By employing the assumptions \eqref{eq:asuume_(u+v)f(u)[weak4AC-25]}, \eqref{eq:asuume_|f(u)|[weak4AC-25]}, along with the condition \eqref{eq:assume_condition_beta1[weak4AC-25]} for $\alpha=0$, i.e.,
$2 (c_3 + c_4)^2 \tau \leq c_0 $, one gets
\begin{align}
      2( u + v ) f_{\tau,N}( u ) + \tau |f_{\tau,N}( u )|^2
& \leq
   -2 c_0 |u|^2 + 2 c_1 |v|^2 + 2c_2 + 2(c_3 + c_4)^2 \tau  |u|^2 + 2 c_5^2 \tau
\notag\\
& \leq
   - c_0 |u|^2 + 2 c_1 |v|^2 + 2c_2 + 2 c_5^2.
\end{align}

Next we focus on the case $q >1, \alpha \in (0,1]$. In view of \eqref{eq:asuume_(u+v)f(u)[weak4AC-25]} and \eqref{eq:asuume_|f(u)|[weak4AC-25]}, 
one obtains 
\begin{align}
  & 2( u + v ) f_{\tau,N}( u ) + \tau |f_{\tau,N}( u )|^2
\notag\\
   & =
   \frac{  2( u + v ) f( u ) }
   { (1 + (\beta_1 \tau^{\theta} +  \beta_2 \lambda_N^{-\rho} ) 
       |u|^{ \frac{2q-2}{\alpha} } )^\alpha
      } 
   +
   \frac{ \tau |f( u )|^2
        }{ (1 + (\beta_1 \tau^{\theta} +  \beta_2 \lambda_N^{-\rho} ) 
       |u|^{ \frac{2q-2}{\alpha} } )^{2\alpha}
      } 
\notag\\
   & \leq
   \frac{ 2 ( -c_0 |u|^{2q} + c_1 |v|^{2q} + c_2 ) }
   { (1 + (\beta_1 \tau^{\theta} +  \beta_2 \lambda_N^{-\rho} )  
       |u|^{ \frac{2q-2}{\alpha} } )^\alpha
      } 
   +
   \frac{ \tau 
         ( c_3|u|^{2q-1} + c_4|u| + c_5 )^2
        }{ (1 + (\beta_1 \tau^{\theta} +  \beta_2 \lambda_N^{-\rho} )  
       |u|^{ \frac{2q-2}{\alpha} } )^{2\alpha}
      } 
\notag\\
   & \leq
   2 c_1 |v|^{2q}
 +
   2 c_2
 +
   \frac{ -2c_0 |u|^{2q} 
         (1 + (\beta_1 \tau^{\theta} +  \beta_2 \lambda_N^{-\rho} ) 
       |u|^{ \frac{2q-2}{\alpha} } )^\alpha 
       +
          2^\alpha c_3^2 \tau 
            |u|^{4q-2}
         +
            \hat{c}_5 
    }
         { (1 + (\beta_1 \tau^{\theta} +  \beta_2 \lambda_N^{-\rho} )  
       |u|^{ \frac{2q-2}{\alpha} } )^{2\alpha}
      },
\end{align}
where we used the Young inequality that
$ ( c_3|u|^{2q-1} + c_4|u| + c_5 )^2 \leq 2^\alpha c_3^2 |u|^{4q-2} + \hat{c}_5  $ for $\alpha \in (0,1], q >1$ and some constant $\hat{c}_5 > 0$.
Before proceeding further, we claim
\begin{equation}\label{eq:ineq_for_(1+x)alpha[weak4AC-25]}
    (1+x)^\alpha \geq 2^{\alpha-1}(1+x^\alpha),
    \quad
    x \geq 0, \alpha \in (0,1].
\end{equation}
To validate this claim, it suffices to prove
$g(x) := (1+x)^\alpha - 2^{\alpha-1}(1+x^\alpha) \geq 0$ for $x \geq 0, \alpha \in [0,1]$.
It is easy to calculate $g'(x) =\alpha [(1 + x)^{\alpha-1}-2^{\alpha-1}x^{\alpha-1}]$ and show that $g'(x)\leq 0$ for $x \in [0,1]$, and $g'(x) \geq 0$ for $x \geq 1$. 
Since $g(1)=0$, we infer $g(x) \geq 0$ for $x \geq 0$.
Armed with \eqref{eq:ineq_for_(1+x)alpha[weak4AC-25]}, one deduces
\begin{align}
  & 2( u + v ) f_{\tau,N}( u ) + \tau |f_{\tau,N}( u )|^2
\notag\\
   & \leq
   2 c_1 |v|^{2q}
 +
   2 c_2
 +
   \frac{ -
          2^\alpha
          c_0 |u|^{2q} 
         (1 + (\beta_1 \tau^{\theta} +  \beta_2 \lambda_N^{-\rho} )^\alpha
         |u|^{(2q-2)} )
        +
          2^\alpha
          c_3^2 \tau 
          |u|^{4q-2}
        +
           \hat{c}_5 }
         { (1 + (\beta_1 \tau^{\theta} +  \beta_2 \lambda_N^{-\rho} )  
       |u|^{ \frac{2q-2}{\alpha} } )^{2\alpha}
      }
\notag\\
   & \leq
   2 c_1 |v|^{2q}
 +
   2 c_2 + \hat{c}_5
 +
   \frac{ -  2^\alpha c_0 |u|^{2q} 
          -  \tfrac12 \cdot 2^\alpha
             c_0 
             (\beta_1 \tau^{\theta} +  \beta_2 \lambda_N^{-\rho} )^{ \alpha } 
             |u|^{4q-2}
         }
   { ( 1 + (\beta_1 \tau^{\theta} +  \beta_2 \lambda_N^{-\rho} ) |u|^{ \frac{2q-2}{\alpha} } )^{ {2\alpha} } }
\notag\\
& \quad
+
   \frac{ 
          -  \tfrac{1}{2} \cdot 2^\alpha
             c_0 
             (\beta_1 \tau^{\theta} +  \beta_2 \lambda_N^{-\rho} )^{ \alpha } 
             |u|^{4q-2}
          +
           2^\alpha
          c_3^2 \tau 
          |u|^{4q-2}
         }
   { ( 1 + (\beta_1 \tau^{\theta} +  \beta_2 \lambda_N^{-\rho} ) |u|^{ \frac{2q-2}{\alpha} } )^{ {2\alpha} } }
\notag\\
   & \leq
   2 c_1 |v|^{2q}
 +
   2 c_2 + \hat{c}_5
 +
  2^\alpha 
   \bigg[
   \frac{ -  c_0 |u|^{2q-2} 
          -  \tfrac12 
             c_0 
             (\beta_1 \tau^{\theta} +  \beta_2 \lambda_N^{-\rho} )^{ \alpha } 
             |u|^{4q-4}
         }
   { ( 1 + (\beta_1 \tau^{\theta} +  \beta_2 \lambda_N^{-\rho} ) |u|^{ \frac{2q-2}{\alpha} } )^{ {2\alpha} } }
\Bigg]
\cdot
  |u|^2,
\label{eq:Ex2_(u+v)f_tau(u)+tau|f(u)|^2[weak4AC-25]}
\end{align}
where the condition \eqref{eq:assume_condition_beta1[weak4AC-25]} for $q>1$, i.e., $2 c_3^2 \tau \leq c_0 \beta_1^\alpha \tau^{\theta \alpha}$, was used in the last inequality.

To validate the assumption \eqref{eq:assume_(u+v)f_tau(u)[weak4AC-25]}, we consider two cases: $|u|\leq 1$ and $|u|>1$.
For the former case $|u| \leq 1$, one easily derives from \eqref{eq:Ex2_(u+v)f_tau(u)+tau|f(u)|^2[weak4AC-25]} that for any $\widetilde{c}_0 >0$,
\begin{align}\label{eq:|u|<1_verify[weak4AC-25]}
 2( u + v ) f_{\tau,N}( u ) + \tau |f_{\tau,N}( u )|^2
\leq
   2 c_1 |v|^{2q}
 +
   2 c_2 + \hat{c}_5
\leq
-2 \widetilde{c}_0 |u|^{2q}
+
   2 c_1 |v|^{2q}
 +
   2 c_2 + \hat{c}_5
+  
  2 \widetilde{c}_0,
\end{align}
as required.
For the other case $|u|>1$, we introduce an auxiliary function defined by
\begin{align}
\Upsilon(x)
    :=
     \frac{ - c_0 x 
            - \tfrac12 
             c_0 
             (\beta_1 \tau^{\theta} +  \beta_2 \lambda_N^{-\rho} )^{ \alpha } 
             x^2
         }
   { ( 1 + (\beta_1 \tau^{\theta} +  \beta_2 \lambda_N^{-\rho} ) x^{\frac{1}{\alpha}} )^{ {2\alpha} } },
   \quad
   x \geq 0, \alpha \in (0,1],
\end{align}
and therefore, the equation \eqref{eq:Ex2_(u+v)f_tau(u)+tau|f(u)|^2[weak4AC-25]} can be rewritten as 
\begin{align}
\label{eq:|u|>1_verify[weak4AC-25]}
   2( u + v ) f_{\tau,N}( u ) + \tau |f_{\tau,N}( u )|^2
\leq
2 c_1 |v|^{2q}
+ 
 2 c_2 + \hat{c}_5
 +
2^\alpha
\Upsilon(|u|^{2q-2}) 
\cdot  |u|^2.
\end{align}
In what follows, we attempt to prove
\begin{equation}\label{eq:Upsilon_bound[weak4AC-25]}
    \sup_{x \geq 1}
    \Upsilon(x)
    \leq
     \tfrac{ - c_0 }{ 2 
  (1 + \beta_1 (\tau^*)^\theta + \beta_2 
 \lambda_1^{-\rho}  )^{2\alpha} }
    =
    \tfrac{ -c_0 }{ 2 
  (1 + \beta_1 + \beta_2 \lambda_1^{-\rho} )^{2\alpha} },
\end{equation}
where, without loss of generality, one takes $\tau^* = 1$.
First, we compute
\begin{equation}\label{eq:Upsilon'(x)[weak4AC-25]}
\Upsilon'(x)
=
     \frac{ -  c_0 
            -  c_0
             (\beta_1 \tau^{\theta} +  \beta_2 \lambda_N^{-\rho} )^{ \alpha } 
             x
            +
             c_0 
             (\beta_1 \tau^{\theta} +  \beta_2 \lambda_N^{-\rho} )
             x^\frac{1}{\alpha}
         }
   { ( 1 + (\beta_1 \tau^{\theta} +  \beta_2 \lambda_N^{-\rho} ) x^{\frac{1}{\alpha}} )^{ 1+{2\alpha} } }
=
  \frac{  c_0 \cdot \Lambda (  (\beta_1 \tau^{\theta} +  \beta_2 \lambda_N^{-\rho} )^{ \alpha } 
             x )
         }
   { ( 1 + (\beta_1 \tau^{\theta} +  \beta_2 \lambda_N^{-\rho} ) x^{\frac{1}{\alpha}} )^{ 1+{2\alpha} } },
\end{equation}
where we further denote
\begin{equation}\label{eq:Tilde(Upsilon)'(x)_inEx2[weak4AC-25]}
    \Lambda(y)
    :=
    y^\frac{1}{\alpha} - y - 1,
    \quad
    y \geq 0.
\end{equation}
Since for $\alpha=1$, it is evident that $\Upsilon'(x) < 0$ and thus
\begin{align}
\sup_{x\geq 1}
    \Upsilon(x)
\leq
    \Upsilon(1)
=
   \tfrac{ - c_0  
            - \tfrac12 
             c_0 
             (\beta_1 \tau^{\theta} +  \beta_2 \lambda_N^{-\rho} )^{ \alpha } 
         }
   { ( 1 + \beta_1 \tau^{\theta} +  \beta_2 \lambda_N^{-\rho}  )^{ {2\alpha} } }
\leq
 \tfrac{ -c_0 }{  
  (1 + \beta_1 (\tau^*)^\theta + \beta_2 \lambda_1^{-\rho}
 )^{2\alpha} }
=
\tfrac{ -c_0 }{  
  (1 + \beta_1 + \beta_2 \lambda_1^{-\rho} )^{2\alpha} }.
\end{align}
For $\alpha \in (0,1)$, we deduce that
$
    \Lambda'(y) 
=
\tfrac{1}{\alpha} y^{ \frac{1}{\alpha} - 1 } -1 
\leq 0$ for $y \in [0,\alpha^{ \frac{\alpha}{1-\alpha} } ]$ and $ \Lambda'(y) \geq 0$ for $y \geq \alpha^{ \frac{\alpha}{1-\alpha} }$,
which means $\Lambda(y)$ is decreasing for $y \in [0,\alpha^{ \frac{\alpha}{1-\alpha} } ]$, while increasing for $y \geq \alpha^{ \frac{\alpha}{1-\alpha} }$.
Moreover, the facts $\Lambda(0) = -1$ and
$\Lambda(y) > 0$ for sufficiently large $y>0$
imply that
there exists a unique point $y^* > 0$ such that 
$\Lambda(y) \leq 0$ for $y \in[0, y^*]$
and
$\Lambda(y) \geq 0$ for $y \geq y^*$.
This yields
\begin{equation}
    \Upsilon'(x) \leq 0 \ \text{for}\ 
    x \in[0,x^*],
    \quad
    \Upsilon'(x) \geq 0 \ \text{for}\  
    x \geq x^*,
    \quad
    x^*:=
    \tfrac{y^*}{ 
    (\beta_1 \tau^{\theta} +  \beta_2 \lambda_N^{-\rho} )^{ \alpha }
    },
\end{equation}
in other words, $\Upsilon(x)$ is decreasing for $x \in[0,x^*]$, while increasing for $x \geq x^*$. Hence we infer that $\sup_{x\geq 1}\Upsilon(x)$ can be bounded by 
$\Upsilon(1)
    \vee
    \lim_{x \rightarrow \infty} 
\Upsilon(x)$, i.e.,
\begin{align}
    \Upsilon(x)
\leq
    \Upsilon(1)
    \vee
    \lim_{x \rightarrow \infty} 
\Upsilon(x)
=
   \tfrac{ - c_0  
            - \tfrac12 
             c_0 
             (\beta_1 \tau^{\theta} +  \beta_2 \lambda_N^{-\rho} )^{ \alpha } 
         }
   { ( 1 + \beta_1 \tau^{\theta} +  \beta_2 \lambda_N^{-\rho}  )^{ {2\alpha} } }
\vee
  \tfrac{-c_0}{
   2 (\beta_1 \tau^{\theta} +  \beta_2 \lambda_N^{-\rho} )^{ \alpha}
  } 
\leq
 \tfrac{ -c_0 }{ 2 
  (1 + \beta_1 + \beta_2 \lambda_1^{-\rho}
 )^{2\alpha} },
\end{align}
as required by \eqref{eq:Upsilon_bound[weak4AC-25]}.
Finally, by inserting this bound into the equation \eqref{eq:|u|>1_verify[weak4AC-25]} and recalling the estimate \eqref{eq:|u|<1_verify[weak4AC-25]},
the verification of the assumption \eqref{eq:assume_(u+v)f_tau(u)[weak4AC-25]} for the case $q>1, \alpha \in (0,1]$ is completed. 
\begin{itemize}
    \item Verification of
    \eqref{eq:assume_|F_tau(u)|[weak4AC-25]} and \eqref{eq:assume_|F_tau(u)|_lambda[weak4AC-25]}.
\end{itemize} 

The verification of
    \eqref{eq:assume_|F_tau(u)|[weak4AC-25]} is straightforward and thus is omitted.
To validate \eqref{eq:assume_|F_tau(u)|_lambda[weak4AC-25]},
we first notice that \eqref{eq:assume_|F_tau(u)|_lambda[weak4AC-25]} is obvious satisfied for the case that $q=1, \alpha=0$, due to the fact that $f_{\tau,N}=f$ for $\alpha=0$, along with
the assumption \eqref{eq:asuume_|f(u)|[weak4AC-25]}.
For the case that $q > 1, \alpha \in (0,1]$,
by employing the assumption \eqref{eq:asuume_|f(u)|[weak4AC-25]} and the inequality \eqref{eq:ineq_for_(1+x)alpha[weak4AC-25]},
we get for all $u \in \R$,
\begin{align}
    |f_{\tau,N}(u)|
 &\leq
    \frac{ c_3 |u|^{2q-1} + c_4 |u| + c_5 }
       { (  1 
            + (\beta_1 \tau^{\theta} +  \beta_2 \lambda_N^{-\rho} )
          |u|^{ \frac{2q-2}{\alpha} } )^\alpha
        }
\notag\\
 &\leq
    \frac{   c_3 |u|^{2q-1} + c_4 |u| + c_5 }
       { 2^{\alpha-1}(  1 
            + (\beta_1 \tau^{\theta} +  \beta_2 \lambda_N^{-\rho} )^\alpha
          |u|^{ 2q-2 } )
        }
\notag\\
 &\leq
   2^{1-\alpha} \beta_2^{-\alpha}
   c_3 
   \lambda_N^{\alpha \rho} |u|
   +
   2^{1-\alpha}(c_4|u| + c_5). 
\end{align}
This confirms the assumption \eqref{eq:assume_|F_tau(u)|_lambda[weak4AC-25]}.

\begin{itemize}
    \item Verification of \eqref{eq:assume_|F_tau(u)-F(u)|[weak4AC-25]}.
\end{itemize} 

First, we introduce an auxiliary function
\begin{equation}
    \Theta(x) := ( 1 + (\beta_1 \tau^{\theta} +  \beta_2 \lambda_N^{-\rho} ) x )^{-\alpha}, 
    \quad
    x \geq 0.
\end{equation}
Using this notation, $f_{\tau,N}$ can be rewritten as
$f_{\tau,N}(u) = f(u)\Theta(u^{ \frac{2q-2}{\alpha} }),u \in \R  $. Noting that
\begin{align}
| \Theta'(x) |
    = 
    \left|
    \tfrac{\alpha (\beta_1 \tau^{\theta} +  \beta_2 \lambda_N^{-\rho} ) }{ ( 1 + (\beta_1 \tau^{\theta} +  \beta_2 \lambda_N^{-\rho} ) x )^{\alpha+1}  }
    \right|
\leq
  \alpha (\beta_1 \tau^{\theta} +  \beta_2 \lambda_N^{-\rho} )
,
\quad
x \geq 0,
\end{align}
and $\Theta(0)=1$, we see for all $u \in \R$,
\begin{align}
        | f_{\tau,N}(u) - f(u) |
    =
      |f(u)|
      \cdot
      \big|\Theta( u^{\frac{2q-2}{\alpha}} ) - \Theta(0)\big|
    \leq
      \alpha (\beta_1 \tau^{\theta} +  \beta_2 \lambda_N^{-\rho} )
      u^{\frac{2q-2}{\alpha}} |f(u)|,
\end{align}
validating the
assumption \eqref{eq:assume_|F_tau(u)-F(u)|[weak4AC-25]}.
\end{proof}

\section{Uniform moment bounds of the fully discrete schemes}
\label{sec:moment-boundedness[weak4AC-25]}

In this section, we aim to derive uniform-in-time moment bounds of the space-time full-discretization schemes.
To this end, we first
%
%
show the uniform moment bounds for the discretized version of the stochastic convolution $\mathcal{O}^{N,\tau}_{t_m} ,m \in \N_0$ that have been extensively studied, with the aid of the Sobolev embedding inequality, see e.g., \cite[Lemma 4.1]{chen2020fullANM} and \cite[Lemma 3.5]{wang2014DCDSweak}.

\begin{lem}\label{lem:O^N,tau(bound)[weak4AC-25]}
    Let Assumptions \ref{assump:A(linear_operator)[weak4AC-25]}, \ref{assump:W(noise)[weak4AC-25]} hold. 
    Let $\mathcal{O}^{N,\tau}_{t_m} ,m \in \N_0$ be the discretized stochastic convolution defined by \eqref{eq:O^N,tau[weak4AC-25]}.
    Then for any $p \geq 1$, there exists a constant $ C(Q,p) > 0 $ such that 
    \begin{equation}
        \sup_{m \in \N_0} 
        \big\|
        \mathcal{O}^{N,\tau}_{t_m} 
        \big\|_{L^p( \Omega; V )}
        +
        \sup_{m \in \N_0} 
        \big\| 
        \mathcal{O}^{N,\tau}_{t_m} 
        \big\|_{L^p( \Omega; \dot{H}^{\gamma} )}
        \leq
        C( Q,p )
        <
        \infty.
    \end{equation}
\end{lem}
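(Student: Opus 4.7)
The plan is to exploit the Gaussian structure of $\mathcal{O}^{N,\tau}_{t_m}$. Being a finite sum of independent centered Gaussian increments $\Delta W_k$ (each with covariance $\tau Q$) acted on by the deterministic bounded operators $E_N(t_m-t_k) P_N$, it is itself centered Gaussian in any separable Banach space containing it. By Fernique's theorem it therefore suffices to bound the second moment in $\dot H^\gamma$ and in $V$; the $L^p(\Omega)$ bounds for every $p \geq 1$ then follow automatically from the equivalence of Gaussian moments.

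For the $\dot H^\gamma$-estimate I would apply the discrete Ito isometry
\begin{equation*}
\mathbb{E}\bigl\|\mathcal{O}^{N,\tau}_{t_m}\bigr\|_\gamma^2
= \tau \sum_{k=0}^{m-1} \bigl\|A^{\gamma/2} E_N(t_m - t_k) P_N Q^{1/2}\bigr\|_{\mathcal{L}_2(H)}^2,
\end{equation*}
expand each Hilbert--Schmidt norm in the eigenbasis $\{e_j\}$ of $A$ to obtain $\sum_{j \leq N}\lambda_j^\gamma e^{-2\lambda_j(t_m-t_k)} \|Q^{1/2}e_j\|^2$, and use the elementary geometric-sum inequality
\[
\tau \sum_{\ell=1}^{m} e^{-2\lambda_j \ell \tau}
\leq \frac{\tau}{1 - e^{-2\lambda_j \tau}}
\leq \frac{C}{\lambda_j},
\]
valid uniformly in $\tau \in (0,\tau^*]$ and $m \in \N$. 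The right-hand side then collapses to $C\sum_{j}\lambda_j^{\gamma-1}\|Q^{1/2}e_j\|^2 = C\|A^{(\gamma-1)/2}Q^{1/2}\|_{\mathcal{L}_2(H)}^2$, which is finite and $(m,N,\tau)$-independent thanks to Assumption \ref{assump:W(noise)[weak4AC-25]}. The exponential decay of the semigroup is exactly what keeps this bound uniform in $m$ rather than growing linearly with $T$.

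For the $V$-norm I would split on the size of $\gamma$ relative to $d/2$. When $\gamma > d/2$, the same computation applied at any $\vartheta \in (d/2,\gamma]$ combined with the Sobolev embedding $\dot H^\vartheta \hookrightarrow V$ closes the argument at once. When $\gamma \leq d/2$ (which under $d \leq 3$ and $\gamma \in (0,2]$ only occurs in the one-dimensional space-time white noise case), I would use a discrete factorization: for some $\alpha \in (d/4, 1/2)$ introduce
\[
Y^{N,\tau}_k := \sum_{\ell < k}(t_k - t_\ell)^{-\alpha} E_N(t_k - t_\ell) P_N \Delta W_\ell,
\]
rewrite $\mathcal{O}^{N,\tau}_{t_m}$ as a discrete convolution of $Y^{N,\tau}_k$ with the singular kernel $(t_m - t_k)^{\alpha-1}$, and combine a uniform $L^p(\Omega; H)$-bound on $Y^{N,\tau}_k$ (once more a Gaussian eigenbasis computation) with the ultracontractive smoothing $\|E_N(\sigma)u\|_V \leq C\sigma^{-d/4}\|u\|$ and Minkowski/H\"older in time, following the strategy of \cite[Lemma 3.5]{wang2014DCDSweak} and \cite[Lemma 4.1]{chen2020fullANM}.

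The main obstacle is the $V$-norm bound in the low-regularity regime $\gamma \leq d/2$: the factorization exponent $\alpha$ must simultaneously exceed $d/4$ (so that the kernel $(t_m-t_k)^{\alpha-1-d/4}$ produced after applying ultracontractivity is integrable) and be small enough that the auxiliary $Y^{N,\tau}_k$ has a uniform-in-$k$ second moment. Gaussian moment equivalence lets me inflate the integrability exponent $p$ without harm, while the semigroup's exponential contractivity is what guarantees that every discrete geometric sum arising along the way remains uniformly bounded in $m$, delivering the desired long-time estimates.
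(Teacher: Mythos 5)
Your $\dot H^\gamma$ estimate is essentially the paper's own argument (It\^o isometry/BDG plus the smoothing $\|A^{1/2}E(s)\|_{\mathcal L(H)}^2\le Ce^{-\lambda_1 s}s^{-1}$ and a uniformly bounded weighted Riemann sum), and the $V$-bound for $\gamma>\tfrac d2$ via Sobolev embedding is fine; the paper itself defers the $V$-bound to \cite[Lemma 3]{CUI2021weak} and \cite[Lemma 3.5]{wang2014DCDSweak}. One small slip: the intermediate bound $\tau\sum_{\ell=1}^m e^{-2\lambda_j\ell\tau}\le \tau(1-e^{-2\lambda_j\tau})^{-1}$ is \emph{not} $\le C\lambda_j^{-1}$ uniformly (it behaves like $\tau$ when $\lambda_j\tau$ is large); you must keep the sum starting at $\ell=1$, i.e.\ use $\tau e^{-2\lambda_j\tau}(1-e^{-2\lambda_j\tau})^{-1}=\tau(e^{2\lambda_j\tau}-1)^{-1}\le(2\lambda_j)^{-1}$.

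The genuine gap is in the factorization step for the critical regime $d=1$, $Q=I$, $\gamma<\tfrac12$ --- exactly the case where it is needed. Your two constraints on $\alpha$ are incompatible: integrability of the kernel $(t_m-t_k)^{\alpha-1-d/4}$ produced by the $L^2\to L^\infty$ ultracontractive bound forces $\alpha>\tfrac d4=\tfrac14$, while a uniform bound on $\E\|Y^{N,\tau}_k\|^2=\tau\sum_{\ell<k}(t_k-t_\ell)^{-2\alpha}\sum_{j\le N}e^{-2\lambda_j(t_k-t_\ell)}$ forces $\alpha<\tfrac14$ (the term $\ell=k-1$ alone contributes $\tau^{1-2\alpha}\sum_j e^{-2\lambda_j\tau}\sim\tau^{1/2-2\alpha}$, and even $\alpha=\tfrac14$ fails by a logarithm). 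Fernique/Gaussian moment equivalence inflates the exponent on the probability space but does nothing about the spatial smoothing rate $\sigma^{-d/4}$, so the window $\alpha\in(\tfrac d4,\tfrac12)$ as written cannot be closed. The standard repair is to measure $Y^{N,\tau}_k$ in $L^p(\Omega;L^p(\mathcal D))$ for large spatial $p$ (a pointwise Gaussian variance computation, which is where the commutativity of $A$ and $Q$ assumed for $\gamma\le\tfrac d2$ is used) and replace ultracontractivity by $\|E(\sigma)\|_{L^p\to V}\le C\sigma^{-d/(2p)}$; then the lower constraint relaxes to $\alpha>\tfrac{d/2+1}{p}$, compatible with $\alpha<\tfrac14$. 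With that modification your outline matches the cited proofs; as stated, the key step would fail.
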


In addition, we establish a contractive property of the semi-group operator $E(t), t\geq 0$ in $L^{4q-2}$.

\begin{prop}
[Contractive property of the semi-group]
\label{prop:E_N(t)_contractivity[weak4AC-25]}
    Let the linear operator $A$ satisfy Assumption \ref{assump:A(linear_operator)[weak4AC-25]}.
    For all $u_0 \in L^{4q-2}$, for any $ t \geq 0 $ and integer $q \geq 1$, the semi-group operator $ E(t):=e^{-At},t \geq 0$ satisfies
    \begin{equation}
        \| E(t) u_0 \|_{L^{4q-2}} \leq \| u_0 \|_{ L^{4q-2} }.
    \end{equation}
\end{prop}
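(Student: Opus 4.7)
The plan is to show that $t \mapsto \|E(t) u_0\|_{L^{4q-2}}^{4q-2}$ is non-increasing, from which the claim follows by taking $(4q-2)$-th roots. The underlying fact is that the heat semigroup with homogeneous Dirichlet boundary conditions is sub-Markovian and hence $L^p$-contractive for every $p \in [1,\infty]$; I would give a direct calculational proof that exploits the convenient observation that the exponent $4q-2$ is always an even positive integer (since $4q$ is even), so that $|v|^{4q-2} = v^{4q-2}$ and all sign/absolute-value issues evaporate.

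First I would reduce to smooth initial data. By density of $C_c^\infty(\mathcal{D})$ in $L^{4q-2}(\mathcal{D})$, choose $\{u_0^{(n)}\} \subset C_c^\infty(\mathcal{D})$ with $u_0^{(n)} \to u_0$ in $L^{4q-2}(\mathcal{D})$. Since $\mathcal{D}$ is bounded, $L^{4q-2}(\mathcal{D}) \hookrightarrow H$ continuously, so $u_0^{(n)} \to u_0$ in $H$ as well, and hence $E(t) u_0^{(n)} \to E(t) u_0$ in $H$ by the $L^2$-contractivity in \eqref{eq:E(t)_semigroup_property[weak4AC-25]}. Extracting an a.e.\ convergent subsequence along $n$ and applying Fatou's lemma, it suffices to prove the inequality for $u_0 \in C_c^\infty(\mathcal{D})$.

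For such $u_0$, classical parabolic regularity on the smooth bounded domain $\mathcal{D}$ with Dirichlet boundary conditions guarantees that $v(t,x) := (E(t) u_0)(x)$ is smooth on $(0,\infty) \times \overline{\mathcal{D}}$, vanishes on $\partial \mathcal{D}$, and solves $\partial_t v = \Delta v$ in the classical sense. Then for each $t > 0$,
\begin{equation*}
    \frac{d}{dt} \int_{\mathcal{D}} v^{4q-2} \, dx
    = (4q-2) \int_{\mathcal{D}} v^{4q-3} \, \Delta v \, dx
    = -(4q-2)(4q-3) \int_{\mathcal{D}} v^{4q-4} |\nabla v|^2 \, dx
    \leq 0,
\end{equation*}
since the boundary contribution in the integration by parts vanishes (because $v|_{\partial\mathcal{D}} = 0$) and the integrand on the right is non-negative (even exponent). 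Integrating on $[\varepsilon, t]$ and sending $\varepsilon \downarrow 0$ using continuity of $t \mapsto v(t)$ in $L^{4q-2}$ yields $\|v(t)\|_{L^{4q-2}} \leq \|u_0\|_{L^{4q-2}}$, and the density step from the previous paragraph completes the argument. The only real obstacle is a minor regularity bookkeeping — establishing smoothness up to the boundary for smooth compactly supported data and justifying the passage $\varepsilon \downarrow 0$ — which is handled by the standard $L^p$-theory of the heat equation on smooth domains, or alternatively by invoking the Beurling--Deny criteria for the Dirichlet form $(u,w) \mapsto \int_{\mathcal{D}} \nabla u \cdot \nabla w \, dx$ on $H_0^1(\mathcal{D})$.
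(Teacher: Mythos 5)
Your proof is correct and follows essentially the same route as the paper: differentiating $\|E(t)u_0\|_{L^{4q-2}}^{4q-2}$ in time and integrating by parts to obtain the non-positive term $-(4q-2)(4q-3)\int_{\mathcal D} v^{4q-4}|\nabla v|^2\,\mathrm{d}x$. The density reduction and regularity bookkeeping you add are sound refinements of the same argument rather than a different approach.
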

\begin{proof}
Since the assertion is trivial for $t=0$,  it suffice to show the assertion for the case $t >0$.
Let us look at the following linear problem 
\begin{equation}
    \frac{ \partial u(t) }{ \partial t }
    =
     - A u(t),
     \
     t >0,
    \quad
     u(0)=u_0,
\end{equation}
whose unique solution can be written as $ u(t) := E(t) u_0, t \geq 0 $.
Applying integration by parts,
one deduces that, for $t >0$,
\begin{equation}
    \begin{aligned}
        \frac{ 
        \partial 
        \| u(t)\|^{4q-2}
        _{ L^{4q-2} }
        }
        { \partial t }
   &=
        \int_{\mathcal{D}}
        (4q-2)
        ( u(t)(x) )^{4q-3}
        \frac{ 
        \partial  u(t)(x)
        }
        { \partial t }
        \,\mathrm{d}x   
\\
   &=
        - (4q-2) \int_{\mathcal{D}}
        ( u(t)(x) )^{4q-3}
        \cdot
        A u(t)(x)
        \,\mathrm{d}x         
\\
   &=
       - (4q-2)
       \langle
        ( u(t) )^{4q-3}
        ,
         A u(t)
       \rangle   
\\
   &=
       - (4q-2)(4q-3)
       \langle
        ( u(t) )^{4q-4}
        \nabla( u(t) )
        ,
        \nabla( u(t) )
       \rangle 
\\
    &\leq 0,
    \end{aligned}   
\end{equation}
which implies the contractive property $ \| E(t) u_0 \|_{L^{4q-2}} = \| u(t) \|_{L^{4q-2}} \leq \| u(0) \|_{L^{4q-2}}
=
\| u_0 \|_{L^{4q-2}}$.
\end{proof}
%
    We highlight that, by offering a contractive property of $E(t)$ in $L^{4q-2}$-norm ($q \geq 1$),  Proposition \ref{prop:E_N(t)_contractivity[weak4AC-25]} plays a vital role in establishing the uniform moment bounds of the full-discretization schemes.
In existing works on long-time error analysis, researchers usually relied on the standard estimates
$\|E(\tau) u_0 \|_{L^p} \leq C_p e^{-\tau } \|u_0\|_{L^p}, \tau \geq 0$ for $p > 2, C_p > 0$ (see e.g., \cite[(2.4)]{brehier2022ESAIM}),
which is not enough for us to prove uniform-in-time moment bounds of exponential schemes as $C_p e^{-\tau} > 1$ for small $\tau$.



Also, we need the following inequality.
\begin{lem}\label{lem:ineq_a+taub[weak4AC-25]}
    Let $q \geq 1$ be any integer, $\upsilon > 0, \tau >0$. Then for all $\mathbb{A} ,\mathbb{B} \geq 0$, it holds that
\begin{equation}
    (\mathbb{A} + \tau \mathbb{B})^{2q-1}
    \leq
    e^{ (2q-2) \upsilon\tau} \mathbb{A}^{2q-1}  
    + 
     \tau
\left( 
  \tau^{2q-2} 
  + 
  (1 +  ( \tfrac{2}{ \upsilon } )^{2q-1} )
  e^{ (2q-2)\tau }
 \right)  
   \mathbb{B}^{2q-1}.
\end{equation}
\end{lem}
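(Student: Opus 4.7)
I would apply the weighted convexity inequality
\begin{equation}
(a+b)^p \leq (1+\varepsilon)^{p-1} a^p + (1+1/\varepsilon)^{p-1} b^p,
\qquad a, b \geq 0,\ p \geq 1,\ \varepsilon > 0,
\end{equation}
which follows from Jensen's inequality applied to the convex map $x \mapsto x^p$ after writing $a+b = \tfrac{1}{1+\varepsilon}\cdot(1+\varepsilon)a + \tfrac{\varepsilon}{1+\varepsilon}\cdot(1+1/\varepsilon)b$. Taking $a=\mathbb{A}$, $b=\tau\mathbb{B}$, $p=2q-1$, it then suffices to choose $\varepsilon > 0$ so that $(1+\varepsilon)^{2q-2}$ is dominated by $e^{(2q-2)\upsilon\tau}$ and $(1+1/\varepsilon)^{2q-2}\tau^{2q-1}$ by $\tau(\tau^{2q-2}+(1+(2/\upsilon)^{2q-1})e^{(2q-2)\tau})$. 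The case $q=1$ is trivial, since the claim then reads $\mathbb{A}+\tau\mathbb{B}\leq \mathbb{A}+(2+2/\upsilon)\tau\mathbb{B}$, so I assume $q\geq 2$ below.

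The choice of $\varepsilon$ would be case-dependent. \textbf{Case 1: $\upsilon\tau \leq 1$.} Set $\varepsilon = \upsilon\tau$; then $1+x \leq e^x$ yields $(1+\upsilon\tau)^{2q-2} \leq e^{(2q-2)\upsilon\tau}$, and a direct computation gives
\begin{equation}
(1+1/(\upsilon\tau))^{2q-2}\tau^{2q-1} \;=\; \tau(\tau+1/\upsilon)^{2q-2} \;\leq\; \tau(2/\upsilon)^{2q-2},
\end{equation}
where the last step uses $\tau \leq 1/\upsilon$. It then remains to verify $(2/\upsilon)^{2q-2} \leq (1+(2/\upsilon)^{2q-1})e^{(2q-2)\tau}$, which I would do by a short sub-case split: if $\upsilon\leq 2$ then $(2/\upsilon)^{2q-2}\leq (2/\upsilon)^{2q-1}$; if $\upsilon>2$ then $(2/\upsilon)^{2q-2}\leq 1\leq e^{(2q-2)\tau}$.

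\textbf{Case 2: $\upsilon\tau > 1$.} Set $\varepsilon = 1$. The coefficient of $\mathbb{A}^{2q-1}$ is $2^{2q-2}$, and since $2<e$ together with $\upsilon\tau > 1$ one has $2^{2q-2}\leq e^{2q-2}\leq e^{(2q-2)\upsilon\tau}$. The coefficient of $\mathbb{B}^{2q-1}$ requires $2^{2q-2}\tau^{2q-1} \leq \tau^{2q-1} + \tau e^{(2q-2)\tau}$, i.e.\ $(2^{2q-2}-1)\tau^{2q-2}\leq e^{(2q-2)\tau}$, which I would check by an elementary calculus argument: the function $\tau \mapsto (2^{2q-2}-1)\tau^{2q-2}e^{-(2q-2)\tau}$ is maximized at $\tau=1$, with maximum value $(2^{2q-2}-1)/e^{2q-2}<1$, again because $2<e$.

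The principal difficulty lies in Case 1, where the convexity estimate produces the factor $(2/\upsilon)^{2q-2}$ whereas the target carries $(2/\upsilon)^{2q-1}$; this mismatch is what forces the inner sub-case split on the size of $\upsilon$ relative to $2$ and is where the precise form of the right-hand side of the lemma is genuinely used. Once both cases are established, combining them with the trivial $q=1$ verification completes the proof.
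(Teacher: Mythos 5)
Your proposal is correct, and it takes a genuinely different route from the paper. The paper expands $(\mathbb{A}+\tau\mathbb{B})^{2q-1}$ by the binomial theorem and applies the Young inequality to each cross term $\mathbb{A}^{2q-1-j}\mathbb{B}^{j}$ with the weight $\epsilon=(\upsilon/2)^j$; the two resulting series are then dominated by partial exponential series, which yields the coefficients $e^{2(q-1)\upsilon\tau}$ and $\tau\big(\tau^{2q-2}+(1+(2/\upsilon)^{2q-1})e^{(2q-2)\tau}\big)$ in a single uniform computation with no case analysis — this is also where the rather specific constants in the statement come from. You instead use the two-term weighted convexity bound $(a+b)^p\leq(1+\varepsilon)^{p-1}a^p+(1+1/\varepsilon)^{p-1}b^p$ and pay for its simplicity with a case split on $\upsilon\tau\lessgtr 1$ (plus the inner sub-split on $\upsilon\lessgtr 2$), since no single $\varepsilon$ works in both regimes. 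I checked your individual estimates — the identity $(1+1/(\upsilon\tau))^{2q-2}\tau^{2q-1}=\tau(\tau+1/\upsilon)^{2q-2}$, the sub-case verifications, and the calculus bound $(2^{2q-2}-1)\tau^{2q-2}e^{-(2q-2)\tau}\leq(2/e)^{2q-2}<1$ — and they all hold. Your argument is more elementary (no binomial sums) and incidentally shows the lemma's right-hand side has slack; the paper's argument is more systematic and explains the exact form of the constants. Either proof serves the lemma's only purpose in the paper, namely absorbing the $\mathbb{A}$-coefficient into $e^{-\widetilde{c}_0\tau}$-type decay in the proof of Theorem \ref{thm:X^N,tau(bound)[weak4AC-25]}.
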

\begin{proof}
For $\mathbb{A}, \mathbb{B} \geq 0$, $q \geq 1$ and $\upsilon > 0$,
we utilize the Young inequality
$xy \leq \epsilon \tfrac{x^\vartheta}{\vartheta} + \epsilon^{-\frac{\varsigma}{\vartheta}} \tfrac{y^\varsigma}{\varsigma} $ 
for 
$x = \mathbb{A}^{2q-1-j}, y= \mathbb{B}^{j}, \vartheta = \tfrac{2q-1}{2q-1-j}, \varsigma=\tfrac{2q-1}{j}$ and $\epsilon = (\tfrac{\upsilon }{2} )^j$ to arrive at
\begin{equation}
\begin{aligned}
 (\mathbb{A} + \tau \mathbb{B})^{2q-1} 
& =
   \mathbb{A}^{2q-1}
   +
   \tau^{2q-1}
   \mathbb{B}^{2q-1}
   +
   \sum_{j=1}^{2q-2}
     \tfrac{(2q-1)!}{j!(2q-1-j)!}
     \tau^j
     \mathbb{A}^{2q-1-j}
     \cdot
     \mathbb{B}^{j}
\\ 
& \leq 
   \mathbb{A}^{2q-1}
   +
   \tau^{2q-1}
   \mathbb{B}^{2q-1}
   +
   \sum_{j=1}^{2q-2}
\Big(
     \tfrac{(2q-1)!}{j!(2q-1-j)!}
     \tau^j
      (\tfrac{ \upsilon}{2} )^j
      \tfrac{2q-1-j}{2q-1}
 \Big)
     \mathbb{A}^{2q-1}
\\
&\quad\quad
 +
   \sum_{j=1}^{2q-2}
   \Big(
       \tfrac{(2q-1)!}{j!(2q-1-j)!}
       \tau^j 
    ( \tfrac{2}{ \upsilon } )^{2q-1-j}
    \tfrac{j}{2q-1}
   \Big)
    \mathbb{B}^{2q-1}.
\end{aligned}
\end{equation}
By deducing that
\begin{align}
\sum_{j=1}^{2q-2}
    \tfrac{(2q-1)!}{j!(2q-1-j)!}
    \tau^j
     (\tfrac{ \upsilon }{2} )^j
    \tfrac{2q-1-j}{2q-1}
\leq
\sum_{j=1}^{2q-2}
    \tfrac{(2q-2)^j }{j!}
    \tau^j
     (\tfrac{ \upsilon }{2} )^j
\leq
(q-1) \upsilon \tau
\sum_{j=1}^{2q-2}
    \tfrac{ ((q-1)\upsilon \tau  )^{j-1}  }{j!}
\leq
   \tau
   (q-1) \upsilon 
    e^{(q-1) \upsilon \tau },
\end{align}
and similarly
\begin{align}
    \sum_{j=1}^{2q-2}
       \tfrac{(2q-1)!}{j!(2q-1-j)!}
       \tau^j 
    ( \tfrac{2}{ \upsilon } )^{2q-1-j}
    \tfrac{j}{2q-1}
\leq
    \sum_{j=1}^{2q-2}
       \tfrac{(2q-2)^{j-1} }{(j-1)!}
       \tau^j 
    ( \tfrac{2}{ \upsilon } )^{2q-1-j}
\leq
  \tau
(1 +  ( \tfrac{2}{ \upsilon } )^{2q-1} )
  e^{ (2q-2)\tau },
\end{align}
we show
\begin{equation}
\begin{aligned}
 (\mathbb{A} + \tau \mathbb{B})^{2q-1} 
& \leq 
   \Big( 1 
       +
       (q-1) \upsilon 
       \tau
       e^{(q-1) \upsilon \tau } 
    \Big)
   \mathbb{A}^{2q-1}
   +
   \tau^{2q-1}
   \mathbb{B}^{2q-1}
  + 
  \tau
(1 +  ( \tfrac{2}{ \upsilon } )^{2q-1} )
  e^{ (2q-2)\tau }
   \mathbb{B}^{2q-1}
\\
&
\leq
(1+ (q-1) \upsilon 
       \tau)
e^{(q-1) \upsilon  \tau }
   \mathbb{A}^{2q-1}
   +
 \tau
\left( 
  \tau^{2q-2} 
  + 
  (1 +  ( \tfrac{2}{ \upsilon } )^{2q-1} )
  e^{ (2q-2)\tau }
 \right)  
   \mathbb{B}^{2q-1}
\\
&
\leq
e^{2(q-1) \upsilon  \tau }
   \mathbb{A}^{2q-1}
   +
 \tau
\left( 
  \tau^{2q-2} 
  + 
  (1 +  ( \tfrac{2}{ \upsilon } )^{2q-1} )
  e^{ (2q-2)\tau }
 \right)  
   \mathbb{B}^{2q-1},
\end{aligned}
\end{equation}
and thus obtain the desired result.
\end{proof}

In the forthcoming lemma, we first prove the uniform moment bounds of $X^{N,\tau}$ in $L^2$-norm.

\begin{lem}[Uniform moment bounds in $L^2$-norm]
\label{lem:X^N,tau(L2_bound)[weak4AC-25]}

    Let Assumptions \ref{assump:A(linear_operator)[weak4AC-25]}-\ref{assump:X_0(Initial Value)[weak4AC-25]}, \ref{assump:F_N,tau[weak4AC-25]} hold 
    and let $ X^{N,\tau}_{t_m}$ be defined by \eqref{eq:full_discretization[weak4AC-25]}.
    For any $p \geq
    1$, there exists a constant
    $C( X_0, Q, p, q ) > 0 $ such that,
    \begin{equation}
        \sup_{m \in \N_0}
        \big\|
          X^{N,\tau}_{t_m} 
        \big\|_{L^p( \Omega; H )}
    \leq
        C( X_0, Q, p, q )
    < \infty.
    \end{equation} 
    
\end{lem}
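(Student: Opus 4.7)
The plan is to decompose $X^{N,\tau}_{t_m} = Y^{N,\tau}_{t_m} + \mathcal{O}^{N,\tau}_{t_m}$, where $\mathcal{O}^{N,\tau}_{t_m}$ is the discrete stochastic convolution from \eqref{eq:O^N,tau[weak4AC-25]} which, by Lemma \ref{lem:O^N,tau(bound)[weak4AC-25]}, is already uniformly bounded in every $L^p(\Omega;V)$-norm (and hence in $L^p(\Omega;L^{2q})$ by the embedding $V\hookrightarrow L^{2q}$). Using the identity $\mathcal{O}^{N,\tau}_{t_{m+1}} = E_N(\tau)\mathcal{O}^{N,\tau}_{t_m} + E_N(\tau) P_N \Delta W_m$ and subtracting from \eqref{eq:full_discretization[weak4AC-25]} gives the deterministic-looking recursion
\begin{equation*}
Y^{N,\tau}_{t_{m+1}} = E_N(\tau)\bigl[Y^{N,\tau}_{t_m} + \tau P_N F_{\tau,N}(X^{N,\tau}_{t_m})\bigr], \qquad Y^{N,\tau}_0 = P_N X_0.
\end{equation*}
Since both summands lie in $H^N$ where $\|E_N(\tau)\|_{\mathcal{L}(H)}\leq e^{-\lambda_1\tau}$, squaring and expanding yields, after discarding $\|P_N\cdot\|\leq\|\cdot\|$,
\begin{equation*}
\|Y^{N,\tau}_{t_{m+1}}\|^2 \leq e^{-2\lambda_1\tau}\Bigl(\|Y^{N,\tau}_{t_m}\|^2 + \tau\bigl[\,2\langle Y^{N,\tau}_{t_m}, F_{\tau,N}(X^{N,\tau}_{t_m})\rangle + \tau\|F_{\tau,N}(X^{N,\tau}_{t_m})\|^2\bigr]\Bigr).
\end{equation*}

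The decisive step is to invoke the structural dissipative inequality \eqref{eq:assume_(u+v)f_tau(u)[weak4AC-25]} pointwise with $u = X^{N,\tau}_{t_m}(x)$ and $v = -\mathcal{O}^{N,\tau}_{t_m}(x)$, so that $u+v = Y^{N,\tau}_{t_m}(x)$, and integrate over $\mathcal{D}$:
\begin{equation*}
2\langle Y^{N,\tau}_{t_m}, F_{\tau,N}(X^{N,\tau}_{t_m})\rangle + \tau\|F_{\tau,N}(X^{N,\tau}_{t_m})\|^2 \leq -2\widetilde{c}_0\|X^{N,\tau}_{t_m}\|^2 + 2\widetilde{c}_1\bigl(|\mathcal{D}| + \|\mathcal{O}^{N,\tau}_{t_m}\|_{L^{2q}}^{2q}\bigr).
\end{equation*}
Combining this with the reverse-triangle estimate $\|X^{N,\tau}_{t_m}\|^2 \geq \tfrac12\|Y^{N,\tau}_{t_m}\|^2 - \|\mathcal{O}^{N,\tau}_{t_m}\|^2$ produces a one-step contraction
\begin{equation*}
\|Y^{N,\tau}_{t_{m+1}}\|^2 \leq \kappa\,\|Y^{N,\tau}_{t_m}\|^2 + \tau\,\mathcal{R}_m, \qquad \kappa := e^{-2\lambda_1\tau}(1-\widetilde{c}_0\tau),
\end{equation*}
with a nonnegative random remainder $\mathcal{R}_m$ bounded affinely by $1 + \|\mathcal{O}^{N,\tau}_{t_m}\|^2 + \|\mathcal{O}^{N,\tau}_{t_m}\|_{L^{2q}}^{2q}$. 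By shrinking $\tau^*$ so that $\widetilde{c}_0\tau^* < 1$, we obtain $\kappa\in(0,1)$ with $1-\kappa\geq c\tau$ for some $c>0$, and this is the mechanism that produces uniformity in $m$.

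Iterating the one-step bound yields $\|Y^{N,\tau}_{t_m}\|^2 \leq \kappa^m\|Y^{N,\tau}_0\|^2 + \tau\sum_{k=0}^{m-1}\kappa^{m-1-k}\mathcal{R}_k$. For $p\geq 2$, Minkowski's inequality applied to this nonnegative linear combination gives
\begin{equation*}
\bigl\|\,\|Y^{N,\tau}_{t_m}\|^2\,\bigr\|_{L^{p/2}(\Omega)} \leq \kappa^m\|Y^{N,\tau}_0\|_{L^p(\Omega;H)}^2 + \tau\sum_{k=0}^{m-1}\kappa^{m-1-k}\|\mathcal{R}_k\|_{L^{p/2}(\Omega)},
\end{equation*}
where $\|\mathcal{R}_k\|_{L^{p/2}(\Omega)}$ is uniformly bounded in $k$ by Lemma \ref{lem:O^N,tau(bound)[weak4AC-25]}; the geometric sum $\tau\sum_{k=0}^{m-1}\kappa^{m-1-k}\leq\tau/(1-\kappa)\leq 1/c$ is then bounded, delivering $\sup_m\mathbb{E}\|Y^{N,\tau}_{t_m}\|^p<\infty$. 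The triangle inequality in $L^p(\Omega;H)$ together with the uniform bound on $\mathcal{O}^{N,\tau}_{t_m}$ closes the case $p\geq 2$, and the case $1\leq p<2$ follows from Jensen's inequality. The principal obstacle lies in the second paragraph: the super-linear leftover $\tau^2\|F_{\tau,N}(X^{N,\tau}_{t_m})\|^2$ arising from squaring must be absorbed \emph{simultaneously} with the cross term by the bespoke one-sided bound \eqref{eq:assume_(u+v)f_tau(u)[weak4AC-25]}; any attempt to control it separately through the linear growth \eqref{eq:assume_|F_tau(u)|_lambda[weak4AC-25]} would import a factor $\lambda_N^{2\alpha\rho}$ that destroys uniformity in $N$.
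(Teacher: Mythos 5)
Your proposal is correct and follows essentially the same route as the paper's proof: the same decomposition $X^{N,\tau}=Y^{N,\tau}+\mathcal{O}^{N,\tau}$, the same one-step recursion for $Y^{N,\tau}$, the same pointwise application of \eqref{eq:assume_(u+v)f_tau(u)[weak4AC-25]} with $v=-\mathcal{O}^{N,\tau}_{t_m}(x)$ so that the quadratic leftover $\tau^2\|F_{\tau,N}\|^2$ is absorbed together with the cross term, and the same geometric iteration combined with Lemma \ref{lem:O^N,tau(bound)[weak4AC-25]}. The only cosmetic differences are that you retain the factor $e^{-2\lambda_1\tau}$ and use $\|X\|^2\geq\tfrac12\|Y\|^2-\|\mathcal{O}\|^2$ where the paper expands via Young's inequality and the bound $1-\widetilde{c}_0\tau\leq e^{-\widetilde{c}_0\tau}$; both yield the same contraction.
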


\begin{proof}
 Recalling the full-discretization schemes \eqref{eq:full-dis_sum[weak4AC-25]}, we introduce
\begin{align}\label{eq:Y^N,tau_sum[weak4AC-25]}
    Y^{N,\tau}_{t_{m}}
    :=
    X^{N,\tau}_{t_{m}}
    -
    \mathcal{O}^{N,\tau}_{t_{m}}
=
    E_N( t_{m} ) Y^{N,\tau}_{0}
   +
    \tau
      \sum_{k=0}^{m-1}
      E_N(t_{m} - t_k)
      P_N
      F_{\tau,N}
     \big( Y^{N,\tau}_{t_k} + \mathcal{O}^{N,\tau}_{t_k} \big)
\end{align}
for $m \in \N_0$.
It is clear that
\begin{align}\label{eq:Y^N,tau[weak4AC-25]}
    Y^{N,\tau}_{t_{m+1}}
    =
      E_N(\tau)  
      \left(   
     Y^{N,\tau}_{t_m} 
      +
     \tau 
     P_N
    F_{\tau,N}( Y^{N,\tau}_{t_m}  +
           \mathcal{O}^{N,\tau}_{t_m} ) 
      \right),
    \ 
    m \in \N_0,
    \quad
    Y^{N,\tau}_{0} = X^{N,\tau}_{0}.
\end{align}
With the aid of the decomposition 
that
$X^{N,\tau} = Y^{N,\tau} + \mathcal{O}^{N,\tau}$
and Lemma \ref{lem:O^N,tau(bound)[weak4AC-25]},
it suffices to bound for $Y^{N,\tau}$.
By using \eqref{eq:E(t)_semigroup_property[weak4AC-25]}, one derives from \eqref{eq:Y^N,tau[weak4AC-25]} that for all $m \in \N_0$,
\begin{equation}
\begin{aligned}
\big\| 
Y^{N,\tau}_{t_{m+1}} 
\big\|^2
&  = 
\big\| E_N(\tau)  
     (   
     Y^{N,\tau}_{t_m} 
      +
     \tau 
     P_N
    F_{\tau,N}( Y^{N,\tau}_{t_m}  +
           \mathcal{O}^{N,\tau}_{t_m} ) 
      )
\big\|^2
\\
&\leq
\big\|   
     Y^{N,\tau}_{t_m} 
      +
     \tau 
    F_{\tau,N}( Y^{N,\tau}_{t_m}  +
           \mathcal{O}^{N,\tau}_{t_m} ) 
\big\|^2
\\
&\leq
\big\|
Y^{N,\tau}_{t_m}
\big\|^2
+
2 \tau
\big\langle
 Y^{N,\tau}_{t_m}
 ,
 F_{\tau,N}( Y^{N,\tau}_{t_m}  +
           \mathcal{O}^{N,\tau}_{t_m} ) 
\big\rangle
+
\tau^2
\big\|
 F_{\tau,N}( Y^{N,\tau}_{t_m}  +
           \mathcal{O}^{N,\tau}_{t_m} ) 
\big\|^2
\\
&\leq
\big\|
Y^{N,\tau}_{t_m}
\big\|^2
-
2 \widetilde{c}_0 \tau
\big\|  Y^{N,\tau}_{t_m}
     +
     \mathcal{O}^{N,\tau}_{t_m}
\big\|^2
+
2 \widetilde{c}_1 \tau
\big( 1 + \|\mathcal{O}^{N,\tau}_{t_m}\|^{2q}_V 
\big),
\end{aligned}
\end{equation}
where the assumption \eqref{eq:assume_(u+v)f_tau(u)[weak4AC-25]} was used in the last inequality. The Young inequality that
$4 \langle Y^{N,\tau}_{t_m} , \mathcal{O}^{N,\tau}_{t_m} \rangle 
\leq
\| Y^{N,\tau}_{t_m}\|^2 + 4 \|\mathcal{O}^{N,\tau}_{t_m}\|^2
$
further implies
\begin{equation}
\begin{aligned}
    \big\| 
   Y^{N,\tau}_{t_{m+1}} 
  \big\|^2
&\leq
   \big\| 
   Y^{N,\tau}_{t_m} 
  \big\|^2
 -2
\widetilde{c}_0 \tau
\big\|  
  Y^{N,\tau}_{t_m}
\big\|^2
  + 4
\widetilde{c}_0 \tau
\big\|  
  Y^{N,\tau}_{t_m}
\big\|
\big\|  
  \mathcal{O}^{N,\tau}_{t_m}
\big\|
  -2
\widetilde{c}_0 \tau
\big\|  
 \mathcal{O}^{N,\tau}_{t_m}
\big\|^2
+
2 \widetilde{c}_1 \tau
\big( 1 + \|\mathcal{O}^{N,\tau}_{t_m}\|^{2q}_V 
\big)
\\
&\leq
   \big\| 
   Y^{N,\tau}_{t_m} 
  \big\|^2
 -
\widetilde{c}_0 \tau
\big\|  
  Y^{N,\tau}_{t_m}
\big\|^2
+2
\widetilde{c}_0 \tau
\big\|  
 \mathcal{O}^{N,\tau}_{t_m}
\big\|^2
+
2 \widetilde{c}_1 \tau
\big( 1 + \|\mathcal{O}^{N,\tau}_{t_m}\|^{2q}_V 
\big)
\\
&\leq
   e^{-\widetilde{c}_0 \tau}
   \big\| 
   Y^{N,\tau}_{t_m} 
  \big\|^2
+
2 (\widetilde{c}_0 + \widetilde{c}_1) \tau
\big( 1 + \|\mathcal{O}^{N,\tau}_{t_m}\|^{2q}_V 
\big)
\\
&\leq
   e^{-(m+1)\widetilde{c}_0 \tau}
   \big\| 
   Y^{N,\tau}_{0} 
  \big\|^2
+
2 (\widetilde{c}_0 + \widetilde{c}_1) \tau
\sum_{k=0}^{m}
 e^{-(m-k)\widetilde{c}_0 \tau}
\big( 1 + \|\mathcal{O}^{N,\tau}_{t_k}\|^{2q}_V 
\big),
\end{aligned}
\end{equation}
where we used the estimate that $x < e^{x-1}$ for $x \in (0,1)$ in the third inequality.
Noting that $\sup_{m \in \N_0}\tau
    \sum_{k=0}^{m}
    e^{ -(m-k) \widetilde{c}_0 \tau } < \infty$,
by further employing
Assumption \ref{assump:X_0(Initial Value)[weak4AC-25]}
and
Lemma \ref{lem:O^N,tau(bound)[weak4AC-25]}, we thus arrive at the desired result.
\end{proof}

We emphasize that the uniform moment bounds in $L^2$-norm are insufficient to carry out the weak error analysis, where uniform moment bounds in $V$-norm are required.
However, it is highly non-trivial to show the uniform moment bounds in $V$-norm, as established in the following theorem.

\begin{thm}[Uniform moment bounds in $V$-norm]
\label{thm:X^N,tau(bound)[weak4AC-25]}
    Let Assumptions \ref{assump:A(linear_operator)[weak4AC-25]}-\ref{assump:X_0(Initial Value)[weak4AC-25]} and Assumption \ref{assump:F_N,tau[weak4AC-25]} hold. Let $ X^{N,\tau}_{t_m}, m \in \N_0$ be defined by \eqref{eq:full_discretization[weak4AC-25]}. For any $p \geq 1$, $\kappa \in [0, \gamma)$, there exists a constant $  C( X_0, Q, p, q, \kappa, \alpha, \theta, \rho ) >0 $ such that
    \begin{equation}
        \sup_{m \in \N_0} 
        \big\|
          X^{N,\tau}_{t_m} 
        \big\|_{L^p( \Omega; V )}
        +
        \sup_{m \in \N_0} 
        \big\| 
          X^{N,\tau}_{t_m} 
        \big\|_{L^p( \Omega; \dot{H}^{\kappa} )}
        \leq
       C( X_0, Q, p, q,\kappa, \alpha, \theta, \rho)
       < 
       \infty.
    \end{equation}   
\end{thm}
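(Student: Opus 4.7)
The plan is to upgrade from the $L^2$-moment bounds of Lemma \ref{lem:X^N,tau(L2_bound)[weak4AC-25]} to the desired $V$- and $\dot H^\kappa$-bounds via a two-stage bootstrap: first establish uniform $L^p(\Omega; L^{4q-2})$ moments for the perturbed part $Y^{N,\tau}$, then lift to $V$ and $\dot H^\kappa$ through the discrete mild formula and parabolic smoothing. The decomposition $X^{N,\tau}_{t_m} = Y^{N,\tau}_{t_m} + \mathcal{O}^{N,\tau}_{t_m}$ reduces matters to $Y^{N,\tau}$, since $\mathcal{O}^{N,\tau}$ already enjoys uniform bounds in both target norms by Lemma \ref{lem:O^N,tau(bound)[weak4AC-25]}.

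For the \textbf{uniform $L^{4q-2}$ bound on $Y^{N,\tau}$}, I apply the new contractivity Proposition \ref{prop:E_N(t)_contractivity[weak4AC-25]} to the recursion \eqref{eq:Y^N,tau[weak4AC-25]} to obtain
\[
\|Y^{N,\tau}_{t_{m+1}}\|_{L^{4q-2}}^{4q-2} \leq \|Y^{N,\tau}_{t_m} + \tau P_N F_{\tau,N}(X^{N,\tau}_{t_m})\|_{L^{4q-2}}^{4q-2},
\]
and then estimate the integrand pointwise as $(|y + \tau f_{\tau,N}(x)|^2)^{2q-1}$, with $y := Y^{N,\tau}_{t_m}(\xi)$ and $x := X^{N,\tau}_{t_m}(\xi)$. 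Plugging \eqref{eq:assume_(u+v)f_tau(u)[weak4AC-25]} with $u = x$, $v = -\mathcal{O}^{N,\tau}_{t_m}(\xi)$ (so $u + v = y$) into the inner-product expansion and using $|x|^2 \geq \tfrac12|y|^2 - |\mathcal{O}|^2$ yields
\[
|y + \tau f_{\tau,N}(x)|^2 \leq (1 - \widetilde{c}_0 \tau)|y|^2 + C\tau(1 + |\mathcal{O}|^{2q}).
\]
Raising to the power $2q-1$ and applying Lemma \ref{lem:ineq_a+taub[weak4AC-25]} with a small $\upsilon > 0$ chosen so that $e^{(2q-2)\upsilon\tau}(1 - \widetilde{c}_0\tau)^{2q-1} \leq e^{-\widetilde c\tau}$ for some $\widetilde c > 0$, I obtain
\[
|y + \tau f_{\tau,N}(x)|^{4q-2} \leq e^{-\widetilde c \tau}|y|^{4q-2} + C\tau(1 + |\mathcal{O}|^{2q(2q-1)}).
\]
Integrating over $\mathcal{D}$, iterating the recursion, and combining the uniform $V$-moments of $\mathcal{O}^{N,\tau}$ with Jensen/Minkowski gives $\sup_m \|X^{N,\tau}_{t_m}\|_{L^p(\Omega; L^{4q-2})} < \infty$ for every $p \geq 1$.

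For the \textbf{$V$- and $\dot H^\kappa$-bounds}, I rewrite \eqref{eq:full-dis_sum[weak4AC-25]} as $X^{N,\tau}_{t_m} = E_N(t_m) P_N X_0 + \mathcal{R}^{N,\tau}_{t_m} + \mathcal{O}^{N,\tau}_{t_m}$, where $\mathcal{R}^{N,\tau}_{t_m} := \tau \sum_{k=0}^{m-1} E_N(t_m - t_k) P_N F_{\tau,N}(X^{N,\tau}_{t_k})$. The initial piece is bounded in $V$ and $\dot H^\kappa$ by Assumption \ref{assump:X_0(Initial Value)[weak4AC-25]} together with the Sobolev embedding $\dot H^\varrho \hookrightarrow V$ for $\varrho > d/2$, and $\mathcal{O}^{N,\tau}$ by Lemma \ref{lem:O^N,tau(bound)[weak4AC-25]}. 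For $\mathcal{R}^{N,\tau}$, I combine \eqref{eq:assume_|F_tau(u)|[weak4AC-25]} with \eqref{eq:asuume_|f(u)|[weak4AC-25]} to get $\|F_{\tau,N}(X^{N,\tau}_{t_k})\|_H \leq C(1 + \|X^{N,\tau}_{t_k}\|_{L^{4q-2}}^{2q-1})$, uniformly $L^p(\Omega)$-controlled by the previous step. The smoothing estimates \eqref{eq:E(t)_semigroup_property[weak4AC-25]} together with the $H \to V$ bound $\|E_N(t) P_N\|_{\mathcal{L}(H, V)} \leq C t^{-d/4} e^{-\lambda_1 t/2}$ (valid for $d \leq 3$) then yield
\[
\|\mathcal R^{N,\tau}_{t_m}\|_V + \|\mathcal R^{N,\tau}_{t_m}\|_{\dot H^\kappa} \leq C\tau \sum_{k=0}^{m-1}\bigl((t_m-t_k)^{-d/4} + (t_m-t_k)^{-\kappa/2}\bigr) e^{-\lambda_1(t_m-t_k)/2} \|F_{\tau,N}(X^{N,\tau}_{t_k})\|_H.
\]
Since $d/4 < 1$ and $\kappa/2 < 1$ (as $\kappa < \gamma \leq 2$), both convolutional sums are uniformly bounded in $m$ and $\tau$; a Minkowski-based passage to $L^p(\Omega)$ closes the argument. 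The flexibility $\alpha\rho < 1 - d/4$ in Assumption \ref{assump:F_N,tau[weak4AC-25]} provides an alternative route via the linear growth \eqref{eq:assume_|F_tau(u)|_lambda[weak4AC-25]}, where the $\lambda_N^{\alpha\rho}$ factor stays below the convergence threshold $1 - d/4$ of the convolution kernel.

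The \textbf{main obstacle} is producing a strictly negative exponential rate $e^{-\widetilde c\tau}$ in the $L^{4q-2}$-step. This requires the delicate balance in Lemma \ref{lem:ineq_a+taub[weak4AC-25]} described above, and crucially relies on the new $L^{4q-2}$-contractivity Proposition \ref{prop:E_N(t)_contractivity[weak4AC-25]}; as remarked after that proposition, the standard bound $\|E_N(\tau)\|_{\mathcal{L}(L^p)} \leq C_p e^{-\lambda_1\tau}$ with $C_p > 1$ for $p > 2$ would be insufficient to iterate for small $\tau$ into a uniform-in-$m$ bound.
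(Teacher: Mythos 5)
Your overall architecture (dissipativity of $f_{\tau,N}$ plus the $L^{4q-2}$-contractivity of the semigroup to get uniform $L^{4q-2}$-moments, then a bootstrap to $V$ and $\dot H^{\kappa}$ via parabolic smoothing) is the same as the paper's, and the second stage of your argument is essentially sound. However, there is a genuine gap in the first stage. You work with the decomposition $X^{N,\tau}=Y^{N,\tau}+\mathcal{O}^{N,\tau}$, where $Y^{N,\tau}$ obeys the recursion \eqref{eq:Y^N,tau[weak4AC-25]} containing $\tau P_N F_{\tau,N}(\cdot)$, and you then ``estimate the integrand pointwise as $|y+\tau f_{\tau,N}(x)|^{4q-2}$.'' This step is not available: after the contractivity of $E(\tau)$ the integrand is $|y(\xi)+\tau\,(P_NF_{\tau,N}(x))(\xi)|^{4q-2}$, and $(P_NF_{\tau,N}(x))(\xi)\neq f_{\tau,N}(x(\xi))$ — the spectral projection destroys the Nemytskii (pointwise) structure on which the dissipativity inequality \eqref{eq:assume_(u+v)f_tau(u)[weak4AC-25]} acts. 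In the $L^2$ argument of Lemma \ref{lem:X^N,tau(L2_bound)[weak4AC-25]} this is harmless because $P_N$ is an orthogonal projection and $\langle Y,P_NF\rangle=\langle Y,F\rangle$ for $Y\in H^N$, but in $L^{4q-2}$ ($q>1$) the projection is neither the identity pointwise nor a uniform (in $N$) contraction, so the negative term $-2\widetilde c_0|u|^2$ cannot be extracted.

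The paper resolves exactly this obstruction by a finer decomposition $X^{N,\tau}=\widetilde Y^{N,\tau}+\mathcal{R}^{N,\tau}$, where $\mathcal{R}^{N,\tau}$ absorbs not only $\mathcal{O}^{N,\tau}$ but also the accumulated defect $\tau\sum_k E(t_m-t_k)(P_N-I)F_{\tau,N}(X^{N,\tau}_{t_k})$; the remaining part $\widetilde Y^{N,\tau}$ then satisfies \eqref{eq:widetilde(Y)^N,tau[weak4AC-25]} with the \emph{unprojected} $F_{\tau,N}$, so the pointwise dissipativity argument and Lemma \ref{lem:ineq_a+taub[weak4AC-25]} apply verbatim. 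The price is that one must bound $\mathcal{R}^{N,\tau}$ uniformly in $V$, which is done by pairing the smoothing $\|A^{1-\alpha\rho-\delta}E(t)\|_{\mathcal{L}(H)}\lesssim t^{-1+\delta+\alpha\rho}e^{-\lambda_1 t/2}$ with $\|A^{-\alpha\rho}(P_N-I)\|_{\mathcal{L}(H)}\leq\lambda_{N+1}^{-\alpha\rho}$ and the linear growth \eqref{eq:assume_|F_tau(u)|_lambda[weak4AC-25]}, using only the $L^2$-moments of Lemma \ref{lem:X^N,tau(L2_bound)[weak4AC-25]}. This is precisely where the hypothesis $\alpha\rho<1-\tfrac d4$ is consumed; it is not, as you suggest at the end, an ``alternative route,'' but the mechanism that controls the $(P_N-I)$ remainder your decomposition leaves unisolated. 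Without this (or some substitute argument showing $P_N$ can be commuted past the pointwise estimate), your $L^{4q-2}$ step, and hence the whole proof, does not close.
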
    

\begin{proof}
Next we complete the proof in four steps.

\textbf{Step 1: Decomposition of $X^{N,\tau}$.}
\vspace{1.5mm}

By introducing the processes
\begin{equation}
    \mathcal{R}_{t_{m}}^{N,\tau}
    :=
  \tau
     \sum_{k=0}^{m-1} 
         E( t_{m} - t_{k} ) (P_N - I)
         F_{\tau,N}( X^{N,\tau}_{t_k} )
       +
       \mathcal{O}_{t_{m}}^{N,\tau}
       ,
       \quad
        \widetilde{Y}^{N,\tau}_{t_{m}}
    :=
    X^{N,\tau}_{t_{m}} - \mathcal{R}_{t_{m}}^{N,\tau},
    \quad
    m \in \N_0,
\end{equation}
and recalling \eqref{eq:full-dis_sum[weak4AC-25]}, we make a decomposition as follows:
\begin{equation}\label{eq:X_decompo_Y_R[weak4AC-25]}
     X^{N,\tau}_{t_{m}}
     =
     \widetilde{Y}^{N,\tau}_{t_{m}}
     +
     \mathcal{R}_{t_{m}}^{N,\tau},
     \quad
     m \in \N_0.
\end{equation}
Clearly, we get for all $ m \in \N_0$,
\begin{equation}
    \widetilde{Y}^{N,\tau}_{t_{m}}
        =
        E_N(t_m) \widetilde{Y}^{N,\tau}_{0}
        +
         \tau
       \sum_{k=0}^{m-1} 
         E( t_{m} - t_{k} )
         F_{\tau,N}( X^{N,\tau}_{t_k} )
    =
            E_N(t_m) \widetilde{Y}^{N,\tau}_{0}
            +
             \tau
       \sum_{k=0}^{m-1} 
         E( t_{m} - t_{k} )
         F_{\tau,N}( \widetilde{Y}^{N,\tau}_{t_k} +
           \mathcal{R}^{N,\tau}_{t_k} ).
\end{equation}
Evidently,
one has 
\begin{equation}\label{eq:widetilde(Y)^N,tau[weak4AC-25]}
        \widetilde{Y}^{N,\tau}_{t_{m+1}} 
   = 
      E(\tau)  
      \left(   
          \widetilde{Y}^{N,\tau}_{t_m} 
          +
          \tau 
           F_{\tau,N}( \widetilde{Y}^{N,\tau}_{t_m} +
           \mathcal{R}^{N,\tau}_{t_m} ) 
      \right),
      \ 
      m \in \N_0,
      \quad
       \widetilde{Y}^{N,\tau}_{0} = X^{N,\tau}_{0}.
\end{equation}

\vspace{1.5mm}
\textbf{Step 2: Uniform bounds of
$
   \mathcal{R}^{N,\tau}$.}
\vspace{1.5mm}

By recalling $1 - \alpha \rho > \tfrac{d}{4} $ in Assumption \ref{assump:F_N,tau[weak4AC-25]} and employing the Sobolev embedding theorem, we obtain for $1 -\alpha \rho - \delta> \tfrac{d}{4}$ with some $\delta \in (0, \tfrac{1 -\alpha \rho -\frac{d}{4} }{2}) $ that
\begin{align}\label{eq:Vbound_mathcal(R)[weak4AC-25]}
&   \sup_{m \in \N_0}
   \left\| 
          \mathcal{R}^{N,\tau}_{t_{m+1}} 
    \right\|_{L^p( \Omega; V )}
\leq 
 \sup_{m \in \N_0}
\left\|
    \tau
    \sum_{k=0}^m 
    E_N( t_{m+1} - t_{k} ) 
    ( P_N - I )
    F_{\tau,N}( X^{N,\tau}_{t_k} ) 
   \right\|_{L^p( \Omega; V )}
   +
 \sup_{m \in \N_0}
   \left\|
   \mathcal{O}^{N,\tau}_{t_{m+1}}
    \right\|_{L^p( \Omega; V )}
\notag    \\
& \leq
C
 \sup_{m \in \N_0}
   \left\|
   \tau
    \sum_{k=0}^m 
    A^{1 -\alpha \rho - \delta}
    E_N( t_{m+1} - t_{k} ) 
    ( P_N - I )
    F_{\tau,N}( X^{N,\tau}_{t_k} ) 
   \right\|_{L^p( \Omega; H )}
   +
   C(Q,p)
\notag    \\
& \leq
C
\tau
 \sup_{m \in \N_0}
   \sum_{k=0}^m 
\left\| A^{-\alpha \rho }( P_N - I ) \right\|_{\mathcal{L}(H)}
   \left\|
    A^{1 - \delta}
    E_N( t_{m+1} - t_{k} ) 
   \right\|_{\mathcal{L}(H)}
   \left( 1 +
     (1 + \lambda_N^{\alpha \rho})
    \big\| X^{N,\tau}_{t_k} \big\|_{L^p( \Omega; H )}
    \right)
       +
   C(Q,p)
\notag    \\
& \leq
C \tau
 \sup_{m \in \N_0}
   \sum_{k=0}^m 
   e^{-\frac12 \lambda_1 ( t_{m+1} - t_{k} ) }
   ( t_{m+1} - t_{k} )^{-1+\delta}
\left( 1 +
    \big\| X^{N,\tau}_{t_k} \big\|_{L^p( \Omega; H )} 
\right)
       +
   C(Q,p)
\notag \\
& < 
\infty,
\end{align}
where we utilized Lemma \ref{lem:O^N,tau(bound)[weak4AC-25]} in the second inequality and the assumption \eqref{eq:assume_|F_tau(u)|_lambda[weak4AC-25]} in the third inequality.
The estimate
$\| A^{1-\delta} E(t) \|_{\mathcal{L}(H)} 
\leq
e^{-\frac{\lambda_1 t}{2}}  \| A^{1-\delta} E(\frac{t}{2}) \|_{\mathcal{L}(H)}
, t \geq 0$ and the property \eqref{eq:E(t)_semigroup_property[weak4AC-25]} were also employed in the fourth inequality, while the last inequality holds true due to the fact that
$\tau
 \sup_{m \in \N_0}
   \sum_{k=0}^m 
   e^{-\frac12 \lambda_1 ( t_{m+1} - t_{k} ) }
   ( t_{m+1} - t_{k} )^{-1+\delta}< \infty$.

\vspace{1.5mm}
\textbf{Step 3: Uniform bounds for
$      
          \widetilde{Y}^{N,\tau}.
       $}
\vspace{1.5mm}

Firstly, we deduce the uniform bounds for
$      
          \widetilde{Y}^{N,\tau}
       $ in $L^{4q-2}$-norm.
In view of Proposition \ref{prop:E_N(t)_contractivity[weak4AC-25]}, for all $m \in \N_0$, one derives from \eqref{eq:widetilde(Y)^N,tau[weak4AC-25]} that
\begin{equation}\label{eq::Y^N,tau(L^4q-2_bound)first_step_inprf[weak4AC-25]}
\begin{aligned}
    \big\|
     \widetilde{Y}^{N,\tau}_{t_{m+1}} 
    \big\|_{ L^{4q-2} }
&=    
    \big\|
    E(\tau)
    \big( 
        \widetilde{Y}^{N,\tau}_{t_m} 
       +
       \tau
       F_{\tau,N}
       (  X^{N,\tau}_{t_m} ) 
    \big)
    \big\|_{ L^{4q-2} }   
\leq    
    \big\|
      \widetilde{Y}^{N,\tau}_{t_m}
     +
     \tau F_{\tau,N}(   \widetilde{Y}^{N,\tau}_{t_m}  +\mathcal{R}^{N,\tau}_{t_m} )  
    \big\|_{ L^{4q-2} } . 
\end{aligned}
\end{equation}
By similar arguments as in Lemma \ref{lem:X^N,tau(L2_bound)[weak4AC-25]}, one sees
\begin{align}
 &
 \big| 
      \widetilde{Y}^{N,\tau}_{t_m}(\cdot) 
     + 
     \tau f_{\tau,N}
     \big(   \widetilde{Y}^{N,\tau}_{t_m}(\cdot) +\mathcal{R}^{N,\tau}_{t_m}(\cdot) 
     \big)
     \big|^2
  \notag \\
&
= 
     \big|  \widetilde{Y}^{N,\tau}_{t_m}(\cdot) \big|^2
     +
     2 \tau
      \widetilde{Y}^{N,\tau}_{t_m}(\cdot) 
     f_{\tau,N}
     \big( \widetilde{Y}^{N,\tau}_{t_m}(\cdot)  +\mathcal{R}^{N,\tau}_{t_m}(\cdot) 
     \big)
     +
     \tau^2
     \big| 
     f_{\tau,N}
     \big(  \widetilde{Y}^{N,\tau}_{t_m}(\cdot) + \mathcal{R}^{N,\tau}_{t_m}(\cdot) \big)
     \big|^2
  \notag \\
&
\leq
     \big| \widetilde{Y}^{N,\tau}_{t_m}(\cdot) \big|^2
     - 
     2 \widetilde{c}_0 \tau
     \big|
     \widetilde{Y}^{N,\tau}_{t_m}(\cdot) + \mathcal{R}^{N,\tau}_{t_m}(\cdot) 
     \big|^2
     +
     2 \widetilde{c}_1 
     \tau
     \Big( 
        1
        + 
       \big|
\mathcal{R}^{N,\tau}_{t_m}(\cdot)
       \big|^{2q} 
     \Big)
  \notag \\
&
\leq
     ( 1 - \widetilde{c}_0 \tau )
     \big| \widetilde{Y}^{N,\tau}_{t_m}(\cdot) \big|^2
     +
     2 \widetilde{c}_0 
     \tau
     \big| \mathcal{R}^{N,\tau}_{t_m}(\cdot) \big|^2
     +
     2 \widetilde{c}_1 
     \tau
      \Big( 
        1 
        +
       \big|
         \mathcal{R}^{N,\tau}_{t_m}(\cdot)
       \big|^{2q} 
      \Big)
  \notag \\
&
\leq
     e^{ - \widetilde{c}_0 \tau }
     \big| \widetilde{Y}^{N,\tau}_{t_m}(\cdot) \big|^2
     +
     \tau
     \cdot
     2 (\widetilde{c}_0 + \widetilde{c}_1 )
     \Big( 
        1 
        +
       \big|
          \mathcal{R}^{N,\tau}_{t_m}(\cdot)
       \big|^{2q} 
      \Big), 
\end{align}
where we employed the assumption \eqref{eq:assume_(u+v)f_tau(u)[weak4AC-25]} in the first inequality, and
the Young inequality that $-4 \widetilde{c}_0 \tau \widetilde{Y}^{N,\tau}_{t_m}(\cdot) \mathcal{R}^{N,\tau}_{t_m}(\cdot) \leq \widetilde{c}_0 \tau (\widetilde{Y}^{N,\tau}_{t_m}(\cdot))^2 + 4 \widetilde{c}_0 \tau ( \mathcal{R}^{N,\tau}_{t_m}(\cdot) )^2$ in the second inequality.
%
%
%
By raising both sides of the above inequlity to the $(4q-2)$-th power and utilizing Lemma \ref{lem:ineq_a+taub[weak4AC-25]} with $\upsilon = \widetilde{c}_0$,
we show
\begin{align}
 &
 \big| 
     \widetilde{Y}^{N,\tau}_{t_m}(\cdot) 
     + 
     \tau f_{\tau,N}
     \big(  \widetilde{Y}^{N,\tau}_{t_m}(\cdot) +\mathcal{R}^{N,\tau}_{t_m}(\cdot) 
     \big)
     \big|^{4q-2}
\notag\\ 
&\leq
   e^{2(q-1) \widetilde{c}_0 \tau }
   \big( 
     e^{-\widetilde{c}_0 \tau} |\widetilde{Y}^{N,\tau}_{t_m}(\cdot)|^2 
    \big)^{2q-1}
   +
   \tau
   \left( 
  \tau^{2q-2} 
  + 
  (1 +  ( \tfrac{2}{ \widetilde{c}_0 } )^{2q-1} )
  e^{ (2q-2)\tau }
 \right)  
   \Big(
     2(\widetilde{c}_0 + \widetilde{c}_1)
     \big(1 + |\mathcal{R}^{N,\tau}_{t_m}(\cdot)|^{2q}\big)
   \Big)^{2q-1}
\notag\\
& \leq    
   e^{ - \widetilde{c}_0 \tau }
   \big| \widetilde{Y}^{N,\tau}_{t_m}(\cdot) \big|^{4q-2} 
     +
     C \tau
     \Big(
     1 + \big|\mathcal{R}^{N,\tau}_{t_m}(\cdot)\big|^{ 2q( 2q-1 ) } 
     \Big).
\end{align}
Integrating the above inequality over $\mathcal{D}$ and taking \eqref{eq::Y^N,tau(L^4q-2_bound)first_step_inprf[weak4AC-25]} into account give  
\begin{equation}
\begin{aligned}
    \big\| \widetilde{Y}^{N,\tau}_{t_{m+1}} \big\|_{L^{4q-2}}^{4q-2}
&
\leq
    e^{ - \widetilde{c}_0 \tau }
    \big\| \widetilde{Y}^{N,\tau}_{t_m} \big\|_{L^{4q-2}}^{4q-2}
  +
     C \tau
    \Big(
      1 + \big\| \mathcal{R}^{N,\tau}_{t_m} \big\|_V^{ (4q-2)q } 
    \Big)  
\\
&
\leq
    e^{ -(m+1) \widetilde{c}_0 \tau }
    \big\| \widetilde{Y}^{N,\tau}_{0} \big\|_{L^{4q-2}}^{4q-2}
  +
     C
    \tau
    \sum_{k=0}^{m}
   e^{ -(m-k) \widetilde{c}_0 \tau }
    \Big(
      1 + \big\| \mathcal{R}^{N,\tau}_{t_m} \big\|_V^{ (4q-2)q } 
    \Big) .
\end{aligned}
\end{equation}
Armed with the bound \eqref{eq:Vbound_mathcal(R)[weak4AC-25]},
by further taking expectations on both sides of the above inequality, 
we similarly show
for any $p \geq 1$,
\begin{equation}\label{eq:bound_Widetilde_Y[weak4AC-25]}
       \sup_{m \in \N_0}
        \big\|
          \widetilde{Y}^{N,\tau}_{t_m} 
        \big\|_{L^p( \Omega; L^{4q-2} )}
    \leq
        e^{- \frac{ \widetilde{c}_0 m \tau}{4q-2}  } 
       \big\| 
         \widetilde{Y}^{N,\tau}_{0}
       \big\|_{L^{p}(\Omega; L^{4q-2} )}
      +
        C
        \sup_{m \in \N_0}
        \| \mathcal{R}^{N,\tau}_{t_m} \|^{q}_{L^{qp}(\Omega;V)}
        < \infty.
    \end{equation}

Secondly, we show the uniform moment bounds of $\widetilde{Y}^{N,\tau}$ in $V$-norm. 
By recalling the Sobolev embedding inequality that $ \| x \|_V \leq C \| A^{\vartheta} x \|$, for $\vartheta \in (\frac{d}{4},1 \wedge \frac{\varrho}{2} ) $, $x \in V$, with $\varrho$ coming from Assumption \ref{assump:X_0(Initial Value)[weak4AC-25]},
one then similarly derives from \eqref{eq:widetilde(Y)^N,tau[weak4AC-25]} that
for all $m \in \N_0$,
\begin{equation}
  \begin{aligned}
   \big\| \widetilde{Y}^{N,\tau}_{t_{m}} \big\|_V
& \leq 
      C
     \Big\| 
         A^\vartheta 
         E( t_{m} )  Y^{N,\tau}_0 
      \Big\|
      +
      C \tau 
            \sum_{k=0}^{m-1}
            \Big\|
               A^\vartheta 
               E( t_{m} - t_{k} )
                F_{\tau,N}
               \big( 
                   \widetilde{Y}^{N,\tau}_{t_k} 
                   +
                   \mathcal{R}^{N,\tau}_{t_k} 
               \big)
            \Big\|
\\
& \leq 
      C 
      \big\| 
        X_0
      \big\|_{ \varrho}
      +
      C \tau 
            \sum_{k=0}^{m-1}
           e^{ - \frac12 \lambda_1 
        ( t_{m} - t_k )}
        ( t_{m} - t_k )^{ -\vartheta } 
         \Big(  
            1
            +
            \big\| 
            \widetilde{Y}^{N,\tau}_{t_k} 
            \big\|^{ 2q-1 }_{ L^{4q-2} }
           + 
            \big\|
               \mathcal{R}^{N,\tau}_{t_k} 
            \big\|_V^{2q-1} 
        \Big),
   \end{aligned}     
\end{equation}
where we
utilized the
assumptions \eqref{eq:assume_|F_tau(u)|_lambda[weak4AC-25]}, \eqref{eq:asuume_|f(u)|[weak4AC-25]} in the last inequality.
By using
Assumption \ref{assump:X_0(Initial Value)[weak4AC-25]} and the bounds 
\eqref{eq:Vbound_mathcal(R)[weak4AC-25]},
\eqref{eq:bound_Widetilde_Y[weak4AC-25]}, 
we get for any $p\geq 1$,
\begin{equation}\label{eq:Vbound_Wiltid(Y)[weak4AC-25]}
\begin{aligned}
        \sup_{m \in \N_0}
        \big\|
          \widetilde{Y}^{N,\tau}_{t_m} 
        \big\|_{L^p( \Omega; V )} 
 &\leq
        C\bigg( 1 
           + 
           \| X_0\|_{L^p( \Omega; \dot{H}^{\varrho} )} 
           +
            \sup_{m \in \N_0}
            \big\|
            \widetilde{Y}^{N,\tau}_{t_m} \big\|^{2q-1}_{L^{(2q-1)p}( \Omega; L^{4q-2} )} 
            +
           \sup_{m \in \N_0}
            \big\|
          \mathcal{R}^{N,\tau}_{t_m} 
        \big\|^{2q-1}_{L^{(2q-1)p}( \Omega; V )} 
        \bigg)
\\
&
        < \infty.
\end{aligned}
\end{equation}

\vspace{1.5mm}
\textbf{Step 4: Uniform moment bounds of  $X^{N,\tau}$.}
\vspace{1.5mm}



For the bound of  
$ \sup_{m \in \N_0}\big\|
          X^{N,\tau}_{t_m} 
        \big\|_{L^p( \Omega; V )}$,
owing to the decomposition \eqref{eq:X_decompo_Y_R[weak4AC-25]}, as well as the bounds \eqref{eq:Vbound_mathcal(R)[weak4AC-25]} and \eqref{eq:Vbound_Wiltid(Y)[weak4AC-25]},
it is evident that for any $p\geq 1$,
   \begin{equation}\label{eq:Vbound_X[weak4AC-25]}
        \sup_{m \in \N_0} 
        \big\|
          X^{N,\tau}_{t_m} 
        \big\|_{L^p( \Omega; V )}
\leq
        \sup_{m \in \N_0} 
        \big\|
          \widetilde{Y}^{N,\tau}_{t_m} 
        \big\|_{L^p( \Omega; V )}
        +
        \sup_{m \in \N_0} 
        \big\|
          \mathcal{R}^{N,\tau}_{t_m} 
        \big\|_{L^p( \Omega; V )}
        <
        \infty.
    \end{equation}

For the bound of
$\sup_{m \in \N_0}
\big\| 
      X^{N,\tau}_{t_m} 
  \big\|_{ L^p(\Omega;\dot{H}^{\kappa}) }
  \kappa \in [0, \gamma)$,
we similarly get for all $
  \kappa \in [0, \gamma), m \in \N_0$,
\begin{align}
    \big\| 
      X^{N,\tau}_{t_{m}} 
    \big\|_{ \kappa }
&   = 
      \bigg\|
         E_N( t_{m} ) 
         X_0
         +
         \tau 
         \sum_{k=0}^{m-1}
         E_N( t_{m} - t_k ) 
         P_N
         F_{\tau,N}
          \big( X^{N,\tau}_{t_k} 
          \big)   
         +
         \mathcal{O}^{N,\tau}_{t_{m}}
      \bigg\|_{ \kappa }
\notag\\
& \leq 
      \| X_0 \|_{ \kappa }
      +
      C
      \tau 
      \sum_{k=0}^{m-1}
        ( t_{m} - t_k )^{ -\frac{ \kappa }{2} }
        e^{ - \frac12 \lambda_1 
            ( t_{m} - t_k )
          }
 \Big( 
         1 
           +
            \big\| 
              X^{N,\tau}_{t_k}  
            \big\|_V^{2q-1} 
        \Big) 
        +
       \big\|
         \mathcal{O}^{N,\tau}_{t_{m}}
      \big\|_{ \kappa },
   \end{align}
where we employed the property \eqref{eq:E(t)_semigroup_property[weak4AC-25]} and
the assumption \eqref{eq:assume_|F_tau(u)|[weak4AC-25]} in the last inequality.
Due to Assumption \ref{assump:X_0(Initial Value)[weak4AC-25]}, Lemma \ref{lem:O^N,tau(bound)[weak4AC-25]} and 
the bound \eqref{eq:Vbound_X[weak4AC-25]},
we finally acquire for any $p\geq 1$, $
  \kappa \in [0, \gamma)$,
\begin{equation}
    \sup_{m \in \N_0}
    \| X^{N,\tau}_{t_m} \|_{L^p(\Omega;\dot{H}^{\kappa}) }
\leq
    C 
    \bigg( 
    1 
    +
    \| X_0 \|_{L^p(\Omega;\dot{H}^{\kappa}) }
    + 
      \sup_{m \in \N_0} 
        \big\|
          X^{N,\tau}_{t_m} 
        \big\|^{2q-1}_{L^{p(2q-1)}( \Omega; V )} 
   +
    \sup_{m \in \N_0} 
     \big\|
         \mathcal{O}^{N,\tau}_{t_{m}}
      \big\|_{L^p(\Omega;\dot{H}^{\kappa}) }
    \bigg)
   <
   \infty.
\end{equation}
The proof is thus completed.
\end{proof}

\section{Uniform-in-time weak convergence analysis and approximations of invariant measures}
\label{sec:convergence[weak4AC-25]}

In this section, we attempt to carry out the uniform-in-time weak convergence analysis of the full-discretization schemes \eqref{eq:full_discretization[weak4AC-25]}.
To achieve this, we require further assumptions as follows.

\begin{assumption}
\label{assume:contractive_or_non-degeneracy[weak4AC-25]}
Assume either 
$
L_f < \lambda_1
$
    or
the covariance operator $Q \in \mathcal{L}(H)$ is invertible, satisfying 
$\|  Q^{-\frac12} (-A)^{-\frac12} \|_{\mathcal{L} } 
< 
\infty. 
$
\end{assumption}
Here the former assumption is called the contractive condition, and the latter one is called the non-degeneracy condition.
Under Assumption \ref{assume:contractive_or_non-degeneracy[weak4AC-25]}, 
an exponential convergence to equilibrium for the SPDE \eqref{eq:considered_SEE[weak4AC-25]} can be attained (see, e.g., \cite[Proposition 3.3]{brehier2022ESAIM}, \cite[Theorem 12.5]{Goldys2006}). 
%


\begin{prop}\label{prop:V-uniform_ergodicity_X(t)[weak4AC-25]}
    Let Assumptions \ref{assump:A(linear_operator)[weak4AC-25]}-\ref{assump:F(Nonlinearity)[weak4AC-25]} 
    and 
    Assumption \ref{assume:contractive_or_non-degeneracy[weak4AC-25]} hold.
    Let $X(t,x), t\geq 0$ be the unique solution of \eqref{eq:considered_SEE[weak4AC-25]} that initiates at $x \in H$.
    Then there exist constants $c > 0$ and $ C > 0$ such that for any $\varphi \in \mathcal{C}_b^1(H)$, $t \geq 0$, and $u, v \in H$, it holds
    \begin{equation}
        \left| 
        \E
        \left[ \varphi( X(t,u) ) \right] 
        - 
        \E\left[ \varphi( X(t,v) ) \right] \right|
        \leq
        C
        \| \varphi \|_{\mathcal{C}_b^1(H)}
        e^{-ct}
        \left( 1 + \|u\|^2 + \|v\|^2 \right).
    \end{equation}  
\end{prop}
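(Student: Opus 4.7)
I would split the proof along the two alternatives in Assumption \ref{assume:contractive_or_non-degeneracy[weak4AC-25]}. In both cases the target is a gradient-type bound on the Markov semigroup $P_t\varphi(u):=\E[\varphi(X(t,u))]$ of the shape $|\nabla P_t\varphi(u)\cdot h|\leq C\,\|\varphi\|_{\mathcal{C}_b^1(H)}\,e^{-ct}(1+\|u\|)\|h\|$; integrating this along the segment from $u$ to $v$ and applying Young's inequality to pass from $\|u-v\|$ to $1+\|u\|^2+\|v\|^2$ then gives the claim.

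\textbf{Contractive case.} Under $L_f<\lambda_1$, I would couple $X(t,u)$ and $X(t,v)$ synchronously, i.e.\ drive both by the same Wiener process $W$. Then $Z(t):=X(t,u)-X(t,v)$ solves the random PDE $\dot Z=-AZ+F(X(\cdot,u))-F(X(\cdot,v))$ with no stochastic forcing. Using the Poincar\'e-type bound $\langle AZ,Z\rangle\geq\lambda_1\|Z\|^2$ and the pointwise one-sided Lipschitz estimate $(f(a)-f(b))(a-b)\leq L_f(a-b)^2$ coming from $f'\leq L_f$, one gets $\tfrac12\tfrac{d}{dt}\|Z(t)\|^2\leq -(\lambda_1-L_f)\|Z(t)\|^2$, hence $\|Z(t)\|\leq e^{-(\lambda_1-L_f)t}\|u-v\|$. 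A mean value argument in $H$ then yields $|\E\varphi(X(t,u))-\E\varphi(X(t,v))|\leq \|\varphi\|_{\mathcal{C}_b^1(H)}e^{-(\lambda_1-L_f)t}\|u-v\|$, which is even stronger than the claim (no moment factor is needed).

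\textbf{Non-degenerate case.} When the synchronous-coupling decay is lost, I would invoke the Bismut-Elworthy-Li (BEL) formula. Write $\eta^h(t):=D_uX(t,u)\cdot h$ for the variational derivative, solving the linearization $\dot\eta^h=-A\eta^h+F'(X(\cdot,u))\eta^h$ with $\eta^h(0)=h$. BEL gives, for $t\in(0,1]$,
\[
\nabla P_t\varphi(u)\cdot h \;=\; \tfrac{1}{t}\,\E\!\left[\varphi(X(t,u))\int_0^t\big\langle Q^{-1/2}\eta^h(s),\mathrm{d}W(s)\big\rangle\right].
\]
The It\^o isometry, together with the a priori $L^2$-in-time bound on $\eta^h$ (from $f'\leq L_f$ and Gr\"onwall) and the assumption $\|Q^{-1/2}(-A)^{-1/2}\|_{\mathcal{L}(H)}<\infty$, then yield $|\nabla P_t\varphi(u)\cdot h|\leq Ct^{-1/2}\|\varphi\|_\infty(1+\|u\|)\|h\|$ for $t\in(0,1]$. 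For $t\geq 1$, decompose $P_t=P_1\circ P_{t-1}$, apply BEL to the inner semigroup on the unit-length window, and lift this short-time smoothing to exponential decay by combining with the Lyapunov structure $V(u)=1+\|u\|^2$ inherited from the dissipativity \eqref{eq:asuume_(u+v)f(u)[weak4AC-25]}, in the spirit of the Harris/asymptotic-coupling arguments of Goldys-Maslowski and Br\'ehier cited in the paper.

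\textbf{Main obstacle.} The contractive alternative is essentially a one-line dissipativity computation. The genuine difficulty is the non-degenerate alternative: making the BEL formula rigorous in the infinite-dimensional, superlinearly growing and low-regularity setting (well-posedness of the Malliavin/linearized equation and the precise meaning of $Q^{-1/2}\eta^h$ as an $H$-valued integrand) and, once the $t^{-1/2}$ short-time smoothing is in hand, upgrading it to genuinely exponential decay without any contractivity hypothesis. This last step is what forces the polynomial growth factor $1+\|u\|^2+\|v\|^2$ on the right-hand side of the claim, since it originates from the Lyapunov weight used in the Harris-type minorization/drift argument.
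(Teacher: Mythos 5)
The paper does not actually prove this proposition: it states it and defers to the cited references (\cite[Proposition 3.3]{brehier2022ESAIM} and \cite[Theorem 12.5]{Goldys2006}). Your outline reproduces exactly the arguments behind those references, and it is sound: in the contractive case $L_f<\lambda_1$, synchronous coupling plus the one-sided Lipschitz bound $f'\leq L_f$ and the Poincar\'e inequality give $\|X(t,u)-X(t,v)\|\leq e^{-(\lambda_1-L_f)t}\|u-v\|$, which is even stronger than the claim (modulo the standard caveat that the energy identity for $\|Z\|^2$ should be justified via Galerkin/Yosida regularization in this low-regularity superlinear setting); in the non-degenerate case, the Bismut--Elworthy--Li formula (made meaningful through $\|Q^{-1/2}(-A)^{-1/2}\|_{\mathcal{L}}<\infty$ and the $L^2(0,t;\dot H^1)$ bound on the linearized flow) yields the strong Feller smoothing, which is upgraded to exponential $V$-uniform ergodicity with Lyapunov weight $1+\|u\|^2$ by a Harris-type drift-and-minorization argument. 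This is precisely where the polynomial factor $1+\|u\|^2+\|v\|^2$ in the statement comes from, as you correctly identify. Since the paper offers no proof of its own, your proposal is the natural self-contained substitute and I see no gap beyond the technical regularization points you already flag.
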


Equipped with Proposition \ref{prop:V-uniform_ergodicity_X(t)[weak4AC-25]},
one can show the
existence of the unique invariant measure for SPDE \eqref{eq:considered_SEE[weak4AC-25]},
due to the Doob theorem for $L_f \in \R$ \cite{da2006introduction}.

\begin{thm}
    Let Assumptions \ref{assump:A(linear_operator)[weak4AC-25]}-\ref{assump:X_0(Initial Value)[weak4AC-25]} 
    and 
    Assumption \ref{assume:contractive_or_non-degeneracy[weak4AC-25]} hold.
    Then the SPDE \eqref{eq:considered_SEE[weak4AC-25]} admits a unique invariant measure $\mu$.
\end{thm}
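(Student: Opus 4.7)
The plan is to handle existence and uniqueness separately, invoking Proposition \ref{prop:V-uniform_ergodicity_X(t)[weak4AC-25]} as the decisive quantitative input for uniqueness and a Krylov--Bogolyubov argument in a compactly embedded space for existence.

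For existence, I would first record that the continuous-time mild solution $\{X(t,X_0)\}_{t \geq 0}$ of \eqref{eq:considered_SEE[weak4AC-25]} enjoys uniform-in-time moment bounds in $\dot{H}^\kappa$ for some $\kappa \in (\tfrac{d}{4},\gamma)$. This can be proved along the same lines as Theorem \ref{thm:X^N,tau(bound)[weak4AC-25]}, working on \eqref{eq:spde[weak4AC-25]} directly: decompose $X(t)$ into the deterministic part driven by the dissipative nonlinearity (controlled via \eqref{eq:asuume_(u+v)f(u)[weak4AC-25]} and the $L^{4q-2}$ contractivity of the semi-group from Proposition \ref{prop:E_N(t)_contractivity[weak4AC-25]}) and the stochastic convolution (uniformly bounded in $\dot{H}^\gamma$ by Assumption \ref{assump:W(noise)[weak4AC-25]} and the standard computation underlying Lemma \ref{lem:O^N,tau(bound)[weak4AC-25]}). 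Because the embedding $\dot{H}^\kappa \hookrightarrow H$ is compact, Markov's inequality forces the Ces\`aro averages
\begin{equation*}
    \nu_T := \tfrac{1}{T}\int_0^T \mathcal{L}\big( X(t, X_0) \big) \, \mathrm{d}t, \quad T > 0,
\end{equation*}
to be tight on $H$. Prokhorov's theorem then produces a weak limit point $\mu$, which is invariant thanks to the Feller continuity of the associated Markov semigroup (standard under Assumption \ref{assump:F(Nonlinearity)[weak4AC-25]} via Gr\"onwall estimates on the mild solution).

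For uniqueness, let $\mu_1,\mu_2$ be two invariant probability measures. Fatou's lemma applied to the uniform-in-time $L^2(\Omega;H)$ moment bound yields $\int_H \|u\|^2 \, \mathrm{d}\mu_i(u) < \infty$ for $i=1,2$. For any $\varphi \in \mathcal{C}_b^1(H)$ and $t \geq 0$, invariance gives
\begin{equation*}
    \int_H \varphi \, \mathrm{d}\mu_1 - \int_H \varphi \, \mathrm{d}\mu_2
    = \int_H\!\int_H \Big( \mathbb{E}[\varphi(X(t,u))] - \mathbb{E}[\varphi(X(t,v))] \Big) \, \mathrm{d}\mu_1(u)\,\mathrm{d}\mu_2(v),
\end{equation*}
and Proposition \ref{prop:V-uniform_ergodicity_X(t)[weak4AC-25]} together with the just-established second-moment integrability (to justify dominated convergence) delivers
\begin{equation*}
    \left| \int_H \varphi\,\mathrm{d}\mu_1 - \int_H \varphi\,\mathrm{d}\mu_2 \right|
    \leq C\,\|\varphi\|_{\mathcal{C}_b^1(H)}\, e^{-ct} \left(1 + \int_H \|u\|^2\,\mathrm{d}\mu_1(u) + \int_H \|v\|^2\,\mathrm{d}\mu_2(v)\right).
\end{equation*}
Letting $t \to \infty$ annihilates the right-hand side; since $\mathcal{C}_b^1(H)$ is measure-determining on $H$, this forces $\mu_1 = \mu_2$.

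The main obstacle is the existence step, specifically securing the uniform-in-time bound of $\mathbb{E}\|X(t)\|_{\dot{H}^\kappa}^p$ in the present low-regularity, superlinearly growing setting without contractivity. Once this regularity-in-expectation estimate is in place, compact embedding plus Krylov--Bogolyubov delivers existence routinely, while uniqueness is essentially a direct corollary of Proposition \ref{prop:V-uniform_ergodicity_X(t)[weak4AC-25]}; the Doob theorem enters only implicitly, through the fact that the contractive/non-degenerate dichotomy in Assumption \ref{assume:contractive_or_non-degeneracy[weak4AC-25]} is precisely what underwrites that proposition.
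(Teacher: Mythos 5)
Your proposal is correct, and it is considerably more explicit than what the paper offers: the paper gives no proof of this theorem at all, only the one-line remark preceding it that existence and uniqueness follow from Proposition \ref{prop:V-uniform_ergodicity_X(t)[weak4AC-25]} ``due to the Doob theorem.'' The routes to uniqueness differ genuinely. The paper's citation of Doob's theorem points to the strong Feller plus irreducibility machinery (which in the non-degenerate case of Assumption \ref{assume:contractive_or_non-degeneracy[weak4AC-25]} requires the Bismut--Elworthy--Li-type smoothing that also underlies Lemma \ref{lem:DU^N(t,X)(regularity)[weak4AC-25]}), whereas you extract uniqueness directly from the quantitative mixing estimate of Proposition \ref{prop:V-uniform_ergodicity_X(t)[weak4AC-25]} by integrating it against $\mu_1 \otimes \mu_2$ and letting $t \to \infty$. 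Your route is more self-contained given that the proposition is already stated, and it treats both branches of Assumption \ref{assume:contractive_or_non-degeneracy[weak4AC-25]} uniformly; the Doob route buys slightly more (uniqueness without any moment information on the invariant measures) but at the cost of verifying strong Feller and irreducibility separately. Your existence argument via Krylov--Bogolyubov with tightness from a uniform $\dot{H}^\kappa$ bound and the compact embedding into $H$ is the standard one and is sound; the required uniform moment bounds for the exact solution are available in this setting (they are the continuous-time analogues of Theorem \ref{thm:X^N,tau(bound)[weak4AC-25]} and are implicit in the well-posedness references the paper cites). Two small points you should tighten if writing this out in full: to conclude $\int_H \|u\|^2\,\mathrm{d}\mu_i(u)<\infty$ for an \emph{arbitrary} invariant measure (not just the Krylov--Bogolyubov limit), you need the moment bound in the Lyapunov form $\mathbb{E}\|X(t,u)\|^2 \leq C + e^{-ct}\|u\|^2$ with explicit dependence on the initial datum, combined with a truncation $\|u\|^2\wedge R$ and invariance, rather than Fatou applied to the bound for one fixed $X_0$; and the Feller property in this superlinear setting should be justified via the one-sided Lipschitz condition $f'\leq L_f$, which yields $\|X(t,u)-X(t,v)\|\leq e^{(L_f-\lambda_1)t}\|u-v\|$ and hence continuity in the initial data for each fixed $t$. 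Neither point is a gap, merely a place where the routine details deserve a sentence.
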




Below, we introduce a continuous version of the full-discretization schemes \eqref{eq:full-dis_sum[weak4AC-25]}, defined by
\begin{equation}\label{eq:continuous_version(full_disc)[weak4AC-25]}
    X^{N,\tau}(t) 
    = 
    E_N(t) X^{N,\tau}_0
    + 
    \int_0^t 
     E_N( t - \lfloor s \rfloor_{\tau} ) P_N F_{\tau,N}( X^{N,\tau}(\lfloor s \rfloor_{\tau}) )  \,\mathrm{d}s
    + 
    \int_{0}^{t} 
    E_N ( t - \lfloor s \rfloor_{\tau} ) P_N \,\mathrm{d}W(s),    
\end{equation}
where $t \geq 0$, and $\lfloor s\rfloor_{\tau} := t_k$ for $s \in [t_k, t_{k+1}), k \in \N_0 $. 
Moreover, we note the process \eqref{eq:continuous_version(full_disc)[weak4AC-25]} satisfies $  X^{N,\tau}(t)=
 X^{N,\tau}_{t_k} $ for $ t = t_k, k \in N_0$ and
\begin{equation}
    d X^{N,\tau}(t)
    =
    - A_N X^{N,\tau}(t) \,\mathrm{d}t
    + E_N( t - \lfloor t \rfloor_{\tau} ) P_N
    F_{\tau,N} ( X_{\lfloor t \rfloor_{\tau} }^{N,\tau} ) \,\mathrm{d}t
    + E_N( t - \lfloor t \rfloor_{\tau}  ) P_N \,\mathrm{d}W(t).
\end{equation}
Thanks to Theorem \ref{thm:X^N,tau(bound)[weak4AC-25]},
the uniform moment bound
$\sup_{t \geq 0} \| X^{N, \tau}(t) \|_{L^p(\Omega;V)}, p \geq 1$ for the continuous extension
is straightforward by standard arguments.
Next
we introduce its H\"older regularity property in negative Sobolev spaces as follows, whose proof is similar to that in \cite[Lemma 4.10]{wang2020efficient}.

\begin{lem}\label{lem:holder-full-discretization[weak4AC-25]}
     Let Assumptions \ref{assump:A(linear_operator)[weak4AC-25]}-\ref{assump:F(Nonlinearity)[weak4AC-25]} and Assumption \ref{assump:F_N,tau[weak4AC-25]} hold. Let $X^{N,\tau}(t), t \geq 0$ be defined by \eqref{eq:continuous_version(full_disc)[weak4AC-25]}. Then for any $p \in [2, \infty)$, 
     $\eta \geq 0$,
     there exists a constant $C(p, \eta, \gamma) > 0$ depending on $p, \eta, \gamma$, such that for any $t > s \geq 0$,
     \begin{equation}
         \left\|
         X^{N,\tau}(t) - X^{N,\tau}(s) \right\|_{L^p(\Omega, \dot{H}^{-\eta} )}
         \leq
         C(p, \eta, \gamma) (t-s)^{ \frac{\gamma+\eta}{2} \wedge \frac12 }.
     \end{equation}
\end{lem}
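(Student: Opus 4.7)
The plan is to split the difference $X^{N,\tau}(t)-X^{N,\tau}(s)$ into three terms and bound each separately in the $L^p(\Omega;\dot H^{-\eta})$-norm. Using the telescoping identity $E_N(t-\lfloor r\rfloor_\tau) - E_N(s-\lfloor r\rfloor_\tau) = (E_N(t-s) - I) E_N(s-\lfloor r\rfloor_\tau)$ on $[0,s]$, the definition \eqref{eq:continuous_version(full_disc)[weak4AC-25]} yields
\[
X^{N,\tau}(t) - X^{N,\tau}(s) = \underbrace{\bigl(E_N(t-s) - I\bigr) X^{N,\tau}(s)}_{=:\mathcal{J}_1} + \underbrace{\int_s^t E_N(t - \lfloor r\rfloor_\tau) P_N F_{\tau,N}(X^{N,\tau}(\lfloor r\rfloor_\tau))\,\mathrm{d}r}_{=:\mathcal{J}_2} + \underbrace{\int_s^t E_N(t - \lfloor r\rfloor_\tau) P_N\,\mathrm{d}W(r)}_{=:\mathcal{J}_3}.
\]
Since $\sup_t \|X^{N,\tau}(t)\|_{L^p(\Omega;H)} < \infty$ by Theorem \ref{thm:X^N,tau(bound)[weak4AC-25]}, the claim is trivial when $t - s \geq 1$, so I restrict to $0 < t - s \leq 1$.

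For $\mathcal{J}_1$, picking $\kappa \in (0,\gamma)$ close to $\gamma$, I combine the semigroup bound $\|A^{-(\eta+\kappa)/2}(I - E_N(t-s))\|_{\mathscr{L}(H)} \leq C(t-s)^{(\eta+\kappa)/2 \wedge 1}$ from \eqref{eq:E(t)_semigroup_property[weak4AC-25]} with $\sup_t \|X^{N,\tau}(t)\|_{L^p(\Omega;\dot H^\kappa)} < \infty$ from Theorem \ref{thm:X^N,tau(bound)[weak4AC-25]} to obtain a rate of $(t-s)^{(\eta+\gamma)/2 \wedge 1}$ (up to an arbitrarily small loss absorbed in the constant). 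For $\mathcal{J}_2$, Minkowski's inequality together with the growth estimates \eqref{eq:assume_|F_tau(u)|[weak4AC-25]}, \eqref{eq:asuume_|f(u)|[weak4AC-25]} and the uniform $V$-moment bound from Theorem \ref{thm:X^N,tau(bound)[weak4AC-25]} give $\|\mathcal{J}_2\|_{L^p(\Omega;\dot H^{-\eta})} \leq C(t-s)$, which is dominated by $(t-s)^{(\gamma+\eta)/2 \wedge 1/2}$ for $t - s \leq 1$. For $\mathcal{J}_3$, the Burkholder-Davis-Gundy inequality reduces matters to
\[
\mathbb{E}\|\mathcal{J}_3\|_{-\eta}^2 \leq C\int_s^t \bigl\|A^{(1-\gamma-\eta)/2} E_N(t - \lfloor r\rfloor_\tau)\bigr\|_{\mathscr{L}(H)}^2 \bigl\|A^{(\gamma-1)/2}Q^{1/2}\bigr\|_{\mathscr{L}_2(H)}^2\,\mathrm{d}r,
\]
and splitting into the cases $\gamma + \eta \geq 1$ (the first factor is bounded) and $\gamma + \eta < 1$ (it behaves like $(t-r)^{-(1-\gamma-\eta)}$), together with Assumption \ref{assump:W(noise)[weak4AC-25]} for the Hilbert-Schmidt norm, yields $\mathbb{E}\|\mathcal{J}_3\|_{-\eta}^2 \leq C(t-s)^{(\gamma+\eta) \wedge 1}$, hence the target rate in $L^2$. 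The passage to $L^p$ for $p \geq 2$ follows from Gaussian hypercontractivity since the stochastic integrand is deterministic.

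The main technical subtlety sits in $\mathcal{J}_3$: the exponent $(\gamma+\eta)/2 \wedge 1/2$ is dictated precisely by the interplay between the semigroup singularity $(t-r)^{-(1-\gamma-\eta)/2}$ and the covariance regularity $(\gamma-1)/2$ supplied by Assumption \ref{assump:W(noise)[weak4AC-25]}, and the threshold $\gamma + \eta = 1$ must be handled by a case split to recover the correct saturation at $\tfrac12$. The contributions from $\mathcal{J}_1$ and $\mathcal{J}_2$ produce strictly faster rates once the uniform $\dot H^\kappa$- and $V$-moment bounds of Theorem \ref{thm:X^N,tau(bound)[weak4AC-25]} are invoked, so the hard analytical work for this lemma has essentially been done already.
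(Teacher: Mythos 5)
The paper does not actually prove this lemma---it defers to \cite[Lemma 4.10]{wang2020efficient}---and your three-term decomposition $\mathcal{J}_1+\mathcal{J}_2+\mathcal{J}_3$ is exactly the standard route that such proofs take. Your treatment of $\mathcal{J}_2$ (Minkowski plus \eqref{eq:assume_|F_tau(u)|[weak4AC-25]}, \eqref{eq:asuume_|f(u)|[weak4AC-25]} and the uniform $V$-bound, giving order $t-s$) and of $\mathcal{J}_3$ (It\^o isometry, the factorization through $A^{(\gamma-1)/2}Q^{1/2}$, the case split at $\gamma+\eta=1$, and Gaussianity of the integral for $p>2$) is correct; note also that $\mathcal{J}_2$ and $\mathcal{J}_3$ only ever evaluate $X^{N,\tau}$ at grid points $\lfloor r\rfloor_\tau$, so Theorem \ref{thm:X^N,tau(bound)[weak4AC-25]} applies verbatim there.

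The gap is in $\mathcal{J}_1$. Theorem \ref{thm:X^N,tau(bound)[weak4AC-25]} provides $\sup_m\|X^{N,\tau}_{t_m}\|_{L^p(\Omega;\dot H^\kappa)}<\infty$ only for $\kappa\in[0,\gamma)$, strictly below $\gamma$, so your estimate yields $(t-s)^{\frac{\kappa+\eta}{2}\wedge 1}$ with $\kappa<\gamma$, i.e.\ $(t-s)^{\frac{\gamma+\eta}{2}\wedge 1-\epsilon}$. This loss sits in the \emph{exponent}, not the constant: for $t-s\to 0$ one has $(t-s)^{a-\epsilon}\gg(t-s)^a$, so it cannot be ``absorbed in the constant,'' and for $\gamma+\eta\le 1$ (in particular the case $\eta=0$, $\gamma<1$ that the paper actually uses in the weak-error proof) the stated rate $(t-s)^{\frac{\gamma+\eta}{2}}$ is not reached. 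The repair is the standard one: split $X^{N,\tau}(s)$ into the discretized stochastic convolution $\mathcal{O}^{N,\tau}(s)$ and the rest. Lemma \ref{lem:O^N,tau(bound)[weak4AC-25]} bounds $\mathcal{O}^{N,\tau}$ in $\dot H^{\gamma}$ \emph{exactly} (no $\epsilon$), so $\|A^{-\eta/2}(E_N(t-s)-I)\mathcal{O}^{N,\tau}(s)\|$ carries the full exponent $\frac{\gamma+\eta}{2}\wedge 1$ (alternatively, estimate this term directly through the It\^o isometry as for $\mathcal{J}_3$); the remaining part, $E_N(s)X_0^N$ plus the drift convolution, lies uniformly in $\dot H^{\gamma\vee\varrho}$ respectively $\dot H^{2\delta}$ for any $\delta<1$ by Assumption \ref{assump:X_0(Initial Value)[weak4AC-25]} and the smoothing estimate used in \eqref{eq:Vbound_mathcal(R)[weak4AC-25]}, hence contributes at least the rate $\frac12$. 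With that modification the argument is complete; as written, it proves the lemma only with an arbitrarily small loss in the H\"older exponent (which, incidentally, would still suffice for the way the lemma is used in Theorem \ref{thm:uniform_weakerror[weak4AC-25]}, but not for the lemma as stated).
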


Further, we introduce another spatial semi-discretization process $ X^{K}(t,x ) ,t \geq 0$ of the spectral Galerkin method \eqref{eq:spatial_disc[weak4AC-25]} that initiates at $x \in H^{K}, K \in \N$.
For any $\varphi \in \mathcal{B}_b(H^{K})$, we define
\begin{equation}
    \nu^{K}(t, x) 
    := 
    \mathbb E\left[ \varphi \big( X^{K} (t,x) \big)\right],
    \quad
    t \geq 0, x \in H^{K}, 
\end{equation}
which is 
the unique solution of the Kolmogorov equation associated to $ X^{K}(t,x ) ,t \geq 0$:
\begin{equation}\label{eq:Kolmogorov_eq_for_U_N(spa)[weak4AC-25]}
\partial_t 
\nu^{K}(t,x) 
 =  
 D \nu^{K}(t,x)
. ( -A x 
   +
  P_{K}
     F(x) ) 
 + 
\frac{1}{2} 
\mathrm{Tr}
\left[ D^2 \nu^{K}(t,x) 
    P_{K} 
    Q^{\frac12} 
    ( P_{K} Q^{\frac12} )^* 
            \right], 
\end{equation}
with $\nu^{K}( 0,\cdot) 
          = 
          \varphi(\cdot)$. 
%
%
We first show that $X^K(t), t \geq 0$ satisfies the uniform moment bound as follows, whose proof follows a slight modification of that in \cite[Lemma 2, Corollary 2]{CUI2021weak} for $q=2$, and is thus omitted.

\begin{lem}
\label{lem:moment-bound-sptailDiscre[weak4AC-25]}
    Let Assumptions \ref{assump:A(linear_operator)[weak4AC-25]}-\ref{assump:X_0(Initial Value)[weak4AC-25]} be fulfilled.
    Let $ X^K(t), t \geq 0$ be the solution of the spectral Galerkin
method
 \eqref{eq:spatial_disc[weak4AC-25]} that initiates at $X_0^{K} \in H^K \cap V, K \in \N$.
    For any $ p\geq 1$, there exists a constant $C( Q,p,q,d ) > 0$ such that 
    \begin{align}
        \sup_{t \geq 0}
        \big\| 
          X^K( t ) 
        \big\|_{ L^p( \Omega ; V ) }
        \leq
        C( Q,p,q,d ) 
        \bigg(1
        +
        \|X_0^K\|^{ \frac{(2q-1)[ (2q-2)((8q-8) - (2q-3)d) + 4]}{ 4-(2q-3)d } }_{ L^{ \frac{(2q-1)[ (2q-2)((8q-8) - (2q-3)d) + 4]p}{ 4-(2q-3)d } } ( \Omega; V)}
        \bigg).
    \end{align}
\end{lem}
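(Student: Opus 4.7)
The plan is to mimic the four-step strategy of Theorem \ref{thm:X^N,tau(bound)[weak4AC-25]}, adapted to the continuous-time semi-discrete setting, and then to bootstrap an $L^{4q-2}$-moment bound up to the $V$-norm through the mild formulation. First I would split
\begin{equation*}
    X^{K}(t) = Y^{K}(t) + \mathcal{O}^{K}(t),
    \qquad
    \mathcal{O}^{K}(t) := \int_0^t E_{K}(t-s) P_{K} \,\mathrm{d}W(s),
\end{equation*}
and invoke the continuous-time analogue of Lemma \ref{lem:O^N,tau(bound)[weak4AC-25]} (Burkholder--Davis--Gundy together with Assumption \ref{assump:W(noise)[weak4AC-25]} and the smoothing of $E_K$) to get $\sup_{t \geq 0}\|\mathcal{O}^{K}(t)\|_{L^p(\Omega;V)} \leq C(Q,p)$. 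The remainder $Y^K$ then satisfies the pathwise random PDE $\partial_t Y^{K} = -A_{K} Y^{K} + P_{K} F(Y^{K} + \mathcal{O}^{K})$ in the finite-dimensional space $H^K$.

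Next I would test the equation for $Y^K$ against $(Y^{K})^{4q-3}$ and integrate over $\mathcal{D}$. Exactly as in Proposition \ref{prop:E_N(t)_contractivity[weak4AC-25]}, integration by parts yields $(4q-3)\int_{\mathcal D} (Y^{K})^{4q-4}|\nabla Y^{K}|^{2} \geq 0$, eliminating the linear dissipation term with a favourable sign. Applying the dissipativity estimate \eqref{eq:asuume_(u+v)f(u)[weak4AC-25]} pointwise with $u = Y^{K} + \mathcal{O}^{K}$ and $v = -\mathcal{O}^{K}$, raised to an appropriate power and combined with Young's inequality to absorb the lower-order terms, produces a differential inequality of the form
\begin{equation*}
    \tfrac{\mathrm d}{\mathrm d t}\|Y^{K}(t)\|_{L^{4q-2}}^{4q-2}
    \leq
    -c\,\|Y^{K}(t)\|_{L^{4q-2}}^{4q-2}
    + C\bigl(1 + \|\mathcal{O}^{K}(t)\|_V^{(4q-2)q}\bigr).
\end{equation*}
Gronwall's lemma together with Step~1 then delivers the uniform-in-time moment bound $\sup_{t \geq 0}\|X^{K}(t)\|_{L^p(\Omega; L^{4q-2})} < \infty$.

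Finally I would bootstrap to the $V$-norm through the mild formulation \eqref{eq:spatial_disc[weak4AC-25]}: using the Sobolev embedding $\dot{H}^{\vartheta} \hookrightarrow V$ for $\vartheta > d/2$, the smoothing inequality $\|A^{\vartheta} E(t)\|_{\mathscr L(H)} \leq C t^{-\vartheta} e^{-\lambda_1 t/2}$, the pointwise growth $|f(u)| \leq C(1+|u|^{2q-1})$, and H\"older's inequality, one bounds $\|X^{K}(t)\|_V$ by a polynomial expression in a lower-order $L^r$-moment of $X^{K}$; iterating a finite number of times, each pass trading integrability for regularity while multiplying the exponent of the initial data by a factor involving $2q-1$, finally lands on the $V$-norm. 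The main obstacle will be the careful bookkeeping of these exponents: each bootstrap step raises the power of $\|X_{0}^{K}\|_V$ by a factor depending on $d$, $q$ and the Sobolev gap, and solving the resulting recursion under the standing assumption $q \in [1,\tfrac{4+3d}{2d})$ (which is precisely what makes the denominator $4-(2q-3)d$ strictly positive and terminates the recursion in finitely many steps) produces the stated exponent $\tfrac{(2q-1)[(2q-2)((8q-8)-(2q-3)d)+4]}{4-(2q-3)d}$. This last bookkeeping is the slight modification of \cite[Lemma 2, Corollary 2]{CUI2021weak} alluded to in the statement.
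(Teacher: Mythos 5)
The paper does not actually prove this lemma --- it is stated with the proof ``omitted'' and attributed to a slight modification of \cite[Lemma 2, Corollary 2]{CUI2021weak} --- so I am judging your proposal on its own merits and against the visible evidence: the stated exponent, the standing hypothesis $q<\tfrac{4+3d}{2d}$, and the way the paper handles the analogous difficulty in Theorem \ref{thm:X^N,tau(bound)[weak4AC-25]}. Your Steps 1 and the treatment of the linear term are fine (for $Y^K\in H^K$ one has $A_KY^K=AY^K$, so the integration by parts goes through exactly as in Proposition \ref{prop:E_N(t)_contractivity[weak4AC-25]}). The genuine gap is in the nonlinear term of your $L^{4q-2}$ energy estimate. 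Testing $\partial_t Y^K=-A_KY^K+P_KF(Y^K+\mathcal{O}^K)$ against $(Y^K)^{4q-3}$ produces $\langle (Y^K)^{4q-3},P_KF(\cdot)\rangle=\langle P_K[(Y^K)^{4q-3}],F(\cdot)\rangle$, and since $(Y^K)^{4q-3}\notin H^K$ and the spectral projection $P_K$ preserves neither positivity nor pointwise products, this is \emph{not} $\int_{\mathcal D}(Y^K)^{4q-3}f(Y^K+\mathcal{O}^K)\,\mathrm{d}x$. The pointwise dissipativity \eqref{eq:asuume_(u+v)f(u)[weak4AC-25]} therefore cannot be invoked, and the sign information on which your differential inequality rests is lost. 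This is precisely the obstruction that forces the paper, in the fully discrete case, to peel the projection off the nonlinearity by moving $\tau\sum_k E(t_m-t_k)(P_N-I)F_{\tau,N}(X^{N,\tau}_{t_k})$ into the remainder $\mathcal{R}^{N,\tau}$ before doing any $L^{4q-2}$ estimate; and that manoeuvre is only affordable there because the \emph{tamed} nonlinearity has linear growth \eqref{eq:assume_|F_tau(u)|_lambda[weak4AC-25]}. For the untamed $F$ of the semi-discrete equation, bounding $(P_K-I)F(X^K)$ would already require polynomial moments of $X^K$, which is circular.

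There is corroborating evidence that the intended proof takes a different route. If your $L^{4q-2}$ energy estimate were available, the $V$-bound would follow from the mild formulation in a \emph{single} pass (since $\|F(u)\|\le C(1+\|u\|_{L^{4q-2}}^{2q-1})$), yielding the clean exponent $2q-1$ on $\|X_0^K\|$ --- not the stated $\tfrac{(2q-1)[(2q-2)((8q-8)-(2q-3)d)+4]}{4-(2q-3)d}$. That exponent, and the requirement $4-(2q-3)d>0$ (equivalent to $q<\tfrac{4+3d}{2d}$), are the fingerprint of the route taken in \cite{CUI2021weak}: energy estimates in $H$ and in $\dot H^1$, where the admissible test functions $Y^K$ and $AY^K$ \emph{do} lie in $H^K$ so the projection is harmless, combined with Gagliardo--Nirenberg interpolation of $\|\cdot\|_{L^{4q-2}}$ between $\|\cdot\|$ and $\|\cdot\|_1$ to close the nonlinear estimate, followed by one application of the mild formulation and Sobolev embedding. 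Your own acknowledgement that the exponent should emerge from an unresolved ``bootstrap recursion'' signals that the clean route does not reproduce the statement. To repair the proposal, replace the $L^{4q-2}$ testing by the $H$/$\dot H^1$ energy pair with interpolation, or justify explicitly how the projection is to be removed from the nonlinearity.
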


In the following lemma, we show the regularity estimates for $\nu^{K}( \cdot, \cdot),K \in \N$, whose proof is based on the Bismut-Elworthy-Li formula and can be found in \cite[Lemma 5, Lemma 6]{CUI2021weak} for $q=2$.

\begin{lem}\label{lem:DU^N(t,X)(regularity)[weak4AC-25]}
Let Assumptions \ref{assump:A(linear_operator)[weak4AC-25]}-\ref{assump:X_0(Initial Value)[weak4AC-25]}
and 
Assumption \ref{assume:contractive_or_non-degeneracy[weak4AC-25]} hold.
For any $\varphi \in \mathcal{C}_b^2(H)$ and $\vartheta_0, \vartheta_1, \vartheta_2 \in [0,1)$, $\vartheta_1 + \vartheta_2 < 1$, there exist constants $c > 0, C( Q, \vartheta_0, \varphi) >0$ and $ C(Q, \vartheta_1, \vartheta_2, \varphi) >0$ such that for $x, y, z \in H^{K}$, $K \in \N$ and $t > 0$,
\begin{align}
\label{eq:DU^N(t,X)(regularity)[weak4AC-25]}
    \Big|D \nu^{K}(t, x). y \Big|
    \leq 
    & C(Q, \vartheta_0, \varphi)
    \Big( 1 
           +
           \sup_{s \in[0, t]} 
           \mathbb{E}\Big[ \big\| 
                            X^{K}(s, x)
                            \big\|_V^{2q}
                      \Big]
    \Big)
    \left( 1 + t^{-\vartheta_0} \right) 
    e^{-c t}
    \| y \|_{-2\vartheta_0},
\\
\label{eq:D^2-U^N(t,X)(regularity)[weak4AC-25]}
    \Big|
    D^2 \nu^{K}(t, x). (y, z)
    \Big| 
    \leq 
    & C(Q, \vartheta_1,\vartheta_2, \varphi) 
    \Big( 1 
           + 
           \sup_{s \in[0, t]}
           \mathbb{E} \Big[\big\|
                            X^{K}(s,x)
                            \big\|_V^{8q-2}
                      \Big]
    \Big)
    \left( 1 + t^{-\vartheta_1 -\vartheta_2} \right)
    e^{-c t}
    \| y \|_{-2\vartheta_1}
    \| z \|_{-2\vartheta_2}.    
\end{align}
\end{lem}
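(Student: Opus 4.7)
The plan is to apply the Bismut--Elworthy--Li (BEL) formula, the standard device for trading derivatives of the test function $\varphi$ for weights built from the variation processes and the noise. Let $\eta^{y,x}(t)\in H^{K}$ denote the first--variation process, i.e. the mild solution of
$\eta^{y,x}(t)=E_K(t)y+\int_0^t E_K(t-s)P_K F'(X^K(s,x)).\eta^{y,x}(s)\,\mathrm{d}s$, and let $\zeta^{y,z,x}(t)$ denote the second--variation process with an inhomogeneity involving $F''(X^K).(\eta^{y,x},\eta^{z,x})$. Under the non--degeneracy case of Assumption~\ref{assume:contractive_or_non-degeneracy[weak4AC-25]}, the finite norm $\|Q^{-1/2}A^{-1/2}\|_{\mathcal{L}(H)}$ legitimizes the Malliavin weight $\frac{1}{t}\int_0^t \langle Q^{-1/2}P_K\eta^{y,x}(s),\mathrm{d}W(s)\rangle$, so the standard BEL identity applies. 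Under the contractive case $L_f<\lambda_1$, one uses instead the direct chain--rule representation $D\nu^K(t,x).y=\mathbb{E}[D\varphi(X^K(t,x)).\eta^{y,x}(t)]$, where the energy estimate $\frac{d}{dt}\|\eta^{y,x}(t)\|^2\le -2(\lambda_1-L_f)\|\eta^{y,x}(t)\|^2$ already yields the exponential decay $\|\eta^{y,x}(t)\|\le e^{-(\lambda_1-L_f)t}\|y\|$.

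The second key ingredient is a short--time smoothing estimate
$\|\eta^{y,x}(s)\|_{L^{p}(\Omega;H)}\le C(1+s^{-\vartheta_0})\|y\|_{-2\vartheta_0}$,
obtained by iterating the mild equation: the lead term contributes $\|E_K(s)y\|\le Cs^{-\vartheta_0}\|y\|_{-2\vartheta_0}$ via \eqref{eq:E(t)_semigroup_property[weak4AC-25]}, while the Duhamel convolution is absorbed by a Gr\"onwall argument whose polynomial exponent is fed by $\mathbb{E}[\|X^K(r)\|_V^{2q-2}]$, itself uniformly bounded in time by Lemma~\ref{lem:moment-bound-sptailDiscre[weak4AC-25]}. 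To glue the singular short--time factor to the exponential decay uniformly on $[0,\infty)$, I split at $t=1$: for $t\le 1$ the BEL/chain--rule bound above already produces the $(1+t^{-\vartheta_0})e^{-ct}$ shape; for $t\ge 1$ I invoke the Markov identity $\nu^K(t,x)=\mathbb{E}[\nu^K(1,X^K(t-1,x))]$ and apply Proposition~\ref{prop:V-uniform_ergodicity_X(t)[weak4AC-25]} to $D\nu^K(t-1,\cdot)$, which manufactures the $e^{-ct}$ factor while the $\nu^K(1,\cdot)$ piece contributes only a harmless multiplicative constant.

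The second--derivative bound is handled analogously, applying BEL on $[0,t/2]$ together with the chain rule on $[t/2,t]$: $D^2\nu^K(t,x).(y,z)$ is expressed as an expectation involving $\zeta^{y,z,x}(t/2)$ and one Malliavin weight, producing the singular factor $t^{-\vartheta_1-\vartheta_2}$. The key new estimate is for $\|F''(X^K(r)).(\eta^{y,x}(r),\eta^{z,x}(r))\|$, which, by $|f''(u)|\le R_f(1+|u|^{2q-2})$ and a H\"older inequality in $x\in\mathcal{D}$, is controlled by $C(1+\|X^K(r)\|_V^{2q-2})\|\eta^{y,x}(r)\|_{L^{4}}\|\eta^{z,x}(r)\|_{L^{4}}$. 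Combining this with two copies of the short--time smoothing bound for $\eta$, the exponential decay of Step~1, a high $L^p(\Omega)$--H\"older splitting, and the uniform $V$--moment bound of Lemma~\ref{lem:moment-bound-sptailDiscre[weak4AC-25]}, one recovers the announced estimate \eqref{eq:D^2-U^N(t,X)(regularity)[weak4AC-25]}; the precise power $\|X^K\|_V^{8q-2}$ emerges from iterating the Jacobian's $(2q-2)$--power through the second--variation Gr\"onwall and the H\"older bookkeeping, exactly matching the combinatorics of \cite[Lemmas 5--6]{CUI2021weak} for $q=2$.

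The main obstacle is the super--linear growth of $F'$ and $F''$: in a naive Gr\"onwall argument, $\int_0^t\|F'(X^K(s))\|_{\mathcal{L}(H)}\,\mathrm{d}s$ would blow up as $t\to\infty$ and destroy the desired uniform--in--time exponential factor. This is precisely why the proof crucially uses the uniform--in--time $V$--moment bound of Lemma~\ref{lem:moment-bound-sptailDiscre[weak4AC-25]} together with the short/long time splitting via Proposition~\ref{prop:V-uniform_ergodicity_X(t)[weak4AC-25]}, and it is the polynomial exponent in that moment bound which dictates the high power $8q-2$ on the right--hand side of \eqref{eq:D^2-U^N(t,X)(regularity)[weak4AC-25]}. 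Once this ingredient is in place, the remainder of the proof is a $q$--dependent bookkeeping of \cite[Lemmas 5--6]{CUI2021weak}, compatible in dimensions $d\le 3$ thanks to the range $q<(4+3d)/(2d)$ imposed in Assumption~\ref{assump:F(Nonlinearity)[weak4AC-25]}.
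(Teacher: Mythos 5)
Your proposal follows essentially the same route as the paper: the paper gives no detailed proof of this lemma but refers to the Bismut--Elworthy--Li-based arguments of \cite[Lemma 5, Lemma 6]{CUI2021weak} (stated there for $q=2$), and your sketch --- first/second variation processes, the BEL weight legitimized by non-degeneracy versus the direct chain rule under contractivity, short-time smoothing glued to long-time exponential decay via the Markov property and Proposition \ref{prop:V-uniform_ergodicity_X(t)[weak4AC-25]}, with the uniform-in-time $V$-moment bound of Lemma \ref{lem:moment-bound-sptailDiscre[weak4AC-25]} supplying the powers $2q$ and $8q-2$ --- is exactly that argument adapted to general $q$. No substantive discrepancy to report.
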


To achieve the desired weak convergence rates of the full-discretization schemes \eqref{eq:full_discretization[weak4AC-25]},
the following commutativity properties of the nonlinearity $F$ in negative Sobolev spaces are required.

\begin{lem}\label{lem:F(negativeSobolev)[weak4AC-25]}
    Let the nonlinear operator $F\colon L^{4q-2}(\mathcal{D}) \rightarrow H, q \geq 1$ satisfy Assumption \ref{assump:F(Nonlinearity)[weak4AC-25]}. Then for any $\vartheta \in (0, 1)$ and $\eta > \max\{ \frac{d}{2}, 1\}$, there exists a constant $C(\vartheta, \eta, q)  >0$ depending on $\vartheta, \eta, q$, such that for any $ u, v \in V \cap \dot{H}^{\vartheta}$, it holds
\begin{equation}
      \left\| F'(\phi) \varsigma \right\|_{-\eta}
      \leq
       C(\vartheta, \eta, q)
       \left( 1 + 
          \max 
          \left\{ \|\phi\|_V,
          \|\phi\|_\vartheta \right\}^{2q-1} 
    \right)
     \| \varsigma \|_{-\vartheta}.
\end{equation}
%

\end{lem}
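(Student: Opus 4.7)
The plan is to estimate $\|F'(\phi)\varsigma\|_{-\eta}$ by duality and then reduce the problem to a standard fractional product / composition estimate in Sobolev spaces. Specifically, for any test function $\psi \in \dot{H}^{\eta}$ one has
\begin{equation}
\langle F'(\phi)\varsigma, \psi \rangle
\;=\;
\int_{\mathcal{D}} f'(\phi(x))\,\varsigma(x)\,\psi(x)\,\mathrm{d}x
\;=\;
\langle \varsigma, f'(\phi)\,\psi \rangle
\;\leq\;
\|\varsigma\|_{-\vartheta}\,\|f'(\phi)\,\psi\|_{\vartheta}.
\end{equation}
Taking the supremum over $\psi$ with $\|\psi\|_{\eta}\leq 1$ and invoking
$\|F'(\phi)\varsigma\|_{-\eta} = \sup_{\|\psi\|_{\eta}\leq 1}\langle F'(\phi)\varsigma,\psi\rangle$, the claim will follow once we establish
\begin{equation}\label{eq:reducedgoal}
\|f'(\phi)\,\psi\|_{\vartheta}
\;\leq\;
C(\vartheta,\eta,q)\bigl(1+\max\{\|\phi\|_{V},\|\phi\|_{\vartheta}\}^{2q-1}\bigr)\|\psi\|_{\eta}.
\end{equation}

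To prove \eqref{eq:reducedgoal} I would proceed in two steps. First, use a fractional Leibniz/product rule in $\dot{H}^{\vartheta}$ (valid for $\vartheta\in(0,1)$) to split
\begin{equation}
\|f'(\phi)\psi\|_{\vartheta}
\;\lesssim\;
\|f'(\phi)\|_{L^{\infty}}\,\|\psi\|_{\vartheta}
+
\|f'(\phi)\|_{\vartheta}\,\|\psi\|_{L^{\infty}}.
\end{equation}
The hypothesis $\eta>\max\{d/2,1\}$ then yields the Sobolev embedding $\dot{H}^{\eta}\hookrightarrow V$, together with $\|\psi\|_{\vartheta}\lesssim\|\psi\|_{\eta}$, so that $\|\psi\|_{V}+\|\psi\|_{\vartheta}\lesssim\|\psi\|_{\eta}$. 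Second, control $f'(\phi)$ via the pointwise bounds of Assumption \ref{assump:F(Nonlinearity)[weak4AC-25]}: directly $\|f'(\phi)\|_{L^{\infty}}\lesssim 1+\|\phi\|_{V}^{2q-2}$, and through a composition (Moser-type) inequality using $|f''(u)|\lesssim 1+|u|^{2q-2}$,
\begin{equation}
\|f'(\phi)\|_{\vartheta}
\;\lesssim\;
\bigl(1+\|\phi\|_{V}^{2q-2}\bigr)\,\|\phi\|_{\vartheta}.
\end{equation}
Combining these bounds gives a prefactor
$(1+\|\phi\|_{V}^{2q-2})(1+\|\phi\|_{\vartheta})\lesssim 1+\max\{\|\phi\|_{V},\|\phi\|_{\vartheta}\}^{2q-1}$, which is exactly \eqref{eq:reducedgoal}.

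The main technical obstacle is justifying the fractional product rule and the composition estimate for $f'\circ\phi$ on the domain $\mathcal{D}$ in the $\dot{H}^{\vartheta}$-scale associated with the Dirichlet Laplacian (rather than the ambient $H^{\vartheta}(\mathcal{D})$). For $\vartheta\in(0,1)$ the two scales coincide up to equivalence of norms, so standard fractional chain-rule and Leibniz results (e.g., via Littlewood--Paley or by interpolation between $L^{\infty}$ and $\dot{H}^{1}$ estimates) apply; the only care needed is to keep the constants independent of $\phi$ and $\psi$ so that the polynomial dependence $\max\{\|\phi\|_{V},\|\phi\|_{\vartheta}\}^{2q-1}$ emerges cleanly. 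Once this is in place the duality pairing above yields the stated inequality.
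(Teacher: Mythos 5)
Your proposal is correct and follows essentially the same route as the paper: the duality reduction $\|F'(\phi)\varsigma\|_{-\eta}\leq\|\varsigma\|_{-\vartheta}\,\|f'(\phi)\psi\|_{\vartheta}$ with $\psi$ ranging over the unit ball of $\dot H^{\eta}$ is exactly the paper's argument, and the fractional Leibniz plus Moser-type composition bounds you invoke are precisely what the paper establishes by hand, by splitting the Sobolev--Slobodeckij double integral into the terms $f'(\phi(x))(\zeta(x)-\zeta(y))$ and $[f'(\phi(x))-f'(\phi(y))]\zeta(y)$ and using the pointwise bounds on $f'$ and $f''$.
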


\begin{proof}
Due to the standard arguments with the Sobolev–Slobodeckij norm (cf. \cite{thomee2007galerkin}) and Assumption \ref{assump:F(Nonlinearity)[weak4AC-25]}, one obtains for any $ \phi \in V \cap \dot{H}^{\vartheta},
\zeta \in V  \cap \dot{H}^{\eta}$,
$\eta > \max\{ \frac{d}{2}, 1\}$,
\begin{equation}
\begin{aligned}
\left\| F'(\phi) \zeta\right\|_\vartheta^2 
  \leq 
  & 
  C \left\|F'(\phi) \zeta\right\|^2
  +
  C 
  \int_\mathcal{D} \int_\mathcal{D} 
  \frac{\left| f'(\phi(x)) \zeta(x) - f'(\phi(y)) \zeta(y) \right|^2 }{ |x-y|^{2 \vartheta + d}} 
  \,\mathrm{d}y \,\mathrm{d}x 
\\
  \leq
  &
  C \left\| F'(\phi) \zeta \right\|^2 
  +
  C \int_\mathcal{D} \int_\mathcal{D} 
  \frac{\left| f'(\phi(x)) (\zeta(x)-\zeta(y)) \right|^2}{ |x-y|^{2 \vartheta + d} } 
  \,\mathrm{d}y \,\mathrm{d}x 
\\
&  + C \int_\mathcal{D} \int_\mathcal{D} 
   \frac{ \left| \left[ f'( \phi(x) )-f'( \phi(y) ) \right] \zeta(y) \right|^2}{ |x-y|^{2 \vartheta+d } } 
   \,\mathrm{d}y \,\mathrm{d}x 
\\
   \leq 
   & 
   C \left\|F'(\phi) \zeta \right\|^2
   +
   C \left\|f'(\phi(\cdot))\right\|_V^2 \cdot\|\zeta\|_{ W^{\vartheta, 2} }^2 
   +
   C \left\| f''(\phi(\cdot))\right\|_V^2 \cdot
  \| \zeta \|_V^2 \cdot \| \phi \|_{W^{\vartheta, 2} }^2 
\\
  \leq
  & C \left( 1 + \| \phi \|_V^{4q-4} \right) \| \zeta \|^2
+ 
C \left( 1 + \| \phi \|_V^{4q-4} \right) \| \zeta \|_\vartheta^2
+
C \left( 1 + \|\phi\|_V^{4q-4} \right) \|\zeta\|_V^2 \cdot \|\phi\|_\vartheta^2 
\\
\leq 
&  
C \left( 1 + \max \left\{ \|\phi\|_V, \|\phi\|_\vartheta \right\}^{4q-2} \right) \left( \|\zeta\|_\vartheta^2 + \|\zeta\|_V^2 \right).
\end{aligned}
\end{equation}
Accordingly, the Sobolev embedding inequality and the fact that $\vartheta \in (0, 1), \eta > \max\{ \frac{d}{2}, 1\}$ imply
\begin{equation}
\begin{aligned}
     \left\| F'(\phi) \varsigma \right\|_{-\eta}
     &
     =
     \sup _{\|\varphi\| \leq 1}
     \Big|
     \Big\langle
       A^{-\frac{\eta}{2}} F'(\phi) \varsigma
       ,
       \varphi 
      \Big\rangle \Big|
\\
     & 
     =
     \sup _{\|\varphi\| \leq 1}
     \Big|
     \Big\langle 
     A^{ -\frac{ \vartheta }{2} } \varsigma 
     ,
     A^{ \frac{ \vartheta }{2} } F'(\phi) A^{-\frac{\eta}{2}} \varphi
     \Big\rangle \Big| 
\\
    & 
    \leq
    \sup_{\|\varphi\| \leq 1 }
    \| \varsigma \|_{-\vartheta} 
    \cdot
    \Big\| 
    F'(\phi) A^{-\frac{\eta}{2}} \varphi \Big\|_\vartheta
\\
    &
    \leq
    \sup _{\|\varphi\| \leq 1}
    \| \varsigma \|_{-\vartheta} 
    \cdot 
    C \left(
       1 
       +
       \max 
       \left\{ \|\phi\|_V , \|\phi\|_\vartheta \right \}^{2q-1} 
       \right)
    \left( \|\varphi\|_{ \vartheta - \eta}
            +
           \big\|A^{ -\frac{\eta}{2} } \varphi\big\|_V 
    \right)
\\
    &
    \leq
    C 
    \sup _{\|\varphi\| \leq 1}
    \| \varsigma \|_{-\vartheta} 
    \cdot 
    \left(
       1 
       +
       \max 
       \left\{ \|\phi\|_V , \|\phi\|_\vartheta \right \}^{2q-1} 
       \right)
    \left\| \varphi \right\|
\\
    &
    \leq
    C
    \left( 1 + 
          \max 
          \left\{ \|\phi\|_V,
          \|\phi\|_\vartheta \right\}^{2q-1} 
    \right)
     \| \varsigma \|_{-\vartheta},
\end{aligned} 
\end{equation}
where the self-adjointness of $A, F'(\varphi)$ was also used.
The proof is thus completed.
%
%
\end{proof}

Equipped with the preceding results, we now state the main convergence result of this paper. 

\begin{thm}[The space-time full error bounds]
\label{thm:uniform_weakerror[weak4AC-25]}
   Let Assumptions \ref{assump:A(linear_operator)[weak4AC-25]}-\ref{assump:X_0(Initial Value)[weak4AC-25]} and Assumptions \ref{assump:F_N,tau[weak4AC-25]}, \ref{assume:contractive_or_non-degeneracy[weak4AC-25]}
   hold.
Let $X(t), t\geq 0$ and $ X_{t_m}^{N,\tau}, m \in \N_0$ be defined by \eqref{eq:spde[weak4AC-25]} and \eqref{eq:full_discretization[weak4AC-25]}, respectively.
   Then for any $\varphi \in \mathcal{C}_b^2(H)$, there exists $C(X_0, Q,q,\varphi, \kappa, \iota) > 0$ such that for $\tau , \lambda_N >0 $, $m, N \in \N $, 
   \begin{equation}
       \Big| 
       \E\Big[
       \varphi ( X( t_m ) ) - \varphi( X_{t_m}^{N,\tau} ) 
       \Big] \Big|
       \leq
       C(X_0, Q,q, \varphi, \kappa, \iota)
       ( 1 + t_m^{-\iota} )
       \left( 
          \tau^{\kappa} + \lambda_N^{-\iota} 
       \right),
   \end{equation}
with $\kappa \in (0, (\gamma \wedge \theta \wedge 1) )$, $\iota \in (0, (\gamma \wedge \rho \wedge 1) )$,
where 
$\gamma$ comes from Assumption \ref{assump:W(noise)[weak4AC-25]} and $\theta, \rho$ are method parameters coming from Assumption
\ref{assump:F_N,tau[weak4AC-25]}.
\end{thm}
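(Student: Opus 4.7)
My plan is a two-step decomposition through the spatial semi-discretization \eqref{eq:spatial_disc[weak4AC-25]},
\begin{equation*}
\E\bigl[\varphi(X(t_m)) - \varphi(X^{N,\tau}_{t_m})\bigr]
 = \E\bigl[\varphi(X(t_m)) - \varphi(X^N(t_m))\bigr]
 + \E\bigl[\varphi(X^N(t_m)) - \varphi(X^{N,\tau}_{t_m})\bigr],
\end{equation*}
analyzing each summand via the Kolmogorov function $\nu^N(t,x) := \E[\varphi(X^N(t,x))]$ that solves \eqref{eq:Kolmogorov_eq_for_U_N(spa)[weak4AC-25]} and whose derivatives are controlled by Lemma~\ref{lem:DU^N(t,X)(regularity)[weak4AC-25]}. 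The crucial exponentially decaying factor $e^{-cs}$ there is what will deliver the uniform-in-time bounds; the singular weights $s^{-\vartheta_0}, s^{-\vartheta_1 - \vartheta_2}$ are what will ultimately produce the $(1 + t_m^{-\iota})$ prefactor.

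\textbf{Spatial error.} I would apply It\^o's formula to $(t,x) \mapsto \nu^N(t_m - t, P_N x)$ along the SPDE solution $X(t)$. After cancellation against the Kolmogorov equation and using $\nu^N(0, \cdot) = \varphi(\cdot)$, the error reduces to
\begin{equation*}
\int_0^{t_m} \E\bigl[D\nu^N(t_m - s, P_N X(s)).\, P_N\bigl(F(X(s)) - F(P_N X(s))\bigr)\bigr] \,\mathrm{d}s
\end{equation*}
plus an initial-data contribution and the auxiliary term $\varphi(X(t_m)) - \varphi(P_N X(t_m))$ (handled directly by $\varphi \in \mathcal{C}_b^1$ and \eqref{eq:P_N(regularity)[weak4AC-25]}). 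For the integral I combine \eqref{eq:DU^N(t,X)(regularity)[weak4AC-25]} with a suitably small $\vartheta_0 \in (0, \iota/2]$, the negative-Sobolev lift $\|(I - P_N)\phi\|_{-2\vartheta_0} \leq C\lambda_{N+1}^{-\iota}\|\phi\|_{2\vartheta_0 + \cdots}$, Lemma~\ref{lem:F(negativeSobolev)[weak4AC-25]}, and the uniform moment bound Lemma~\ref{lem:moment-bound-sptailDiscre[weak4AC-25]} on $X^N$. The $e^{-c(t_m - s)}(t_m - s)^{-\vartheta_0}$ weight then yields a bound of the form $C(1 + t_m^{-\iota})\lambda_N^{-\iota}$.

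\textbf{Temporal error.} I apply It\^o's formula to $t \mapsto \nu^N(t_m - t, X^{N,\tau}(t))$ using the continuous extension \eqref{eq:continuous_version(full_disc)[weak4AC-25]}. Cancellation against \eqref{eq:Kolmogorov_eq_for_U_N(spa)[weak4AC-25]} leaves
\begin{equation*}
\mathcal{E}^{\mathrm{time}}_{\tau,N}
= \int_0^{t_m} \E\bigl[\mathcal{I}_1(s) + \mathcal{I}_2(s)\bigr]\,\mathrm{d}s,
\end{equation*}
where $\mathcal{I}_1 = D\nu^N.\bigl(E_N(s - \lfloor s\rfloor_\tau) P_N F_{\tau,N}(X^{N,\tau}(\lfloor s\rfloor_\tau)) - P_N F(X^{N,\tau}(s))\bigr)$ and $\mathcal{I}_2$ is the analogous trace discrepancy produced by $E_N(s - \lfloor s\rfloor_\tau)P_N Q^{1/2} - P_N Q^{1/2}$. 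I split $\mathcal{I}_1 = \mathcal{I}_1^{a} + \mathcal{I}_1^{b} + \mathcal{I}_1^{c}$ so that: $\mathcal{I}_1^{a}$ isolates $F - F_{\tau,N}$ (rate $\tau^\theta + \lambda_N^{-\rho}$ via \eqref{eq:assume_|F_tau(u)-F(u)|[weak4AC-25]}); $\mathcal{I}_1^{b}$ isolates $F(X^{N,\tau}(s)) - F(X^{N,\tau}(\lfloor s\rfloor_\tau))$ (rate $\tau^{(\gamma + 2\vartheta_0)/2}$ via Lemma~\ref{lem:holder-full-discretization[weak4AC-25]} lifted to negative Sobolev norms by Lemma~\ref{lem:F(negativeSobolev)[weak4AC-25]}); $\mathcal{I}_1^{c}$ isolates $(E_N(s - \lfloor s\rfloor_\tau) - I) P_N F(\cdot)$ (rate $\tau^{\vartheta_0}$ via \eqref{eq:E(t)_semigroup_property[weak4AC-25]}). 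In each subterm the super-linearity of $F$ is absorbed by the uniform $V$-moment bound from Theorem~\ref{thm:X^N,tau(bound)[weak4AC-25]}, and the time integrals are uniformly bounded in $t_m$ by the integrability of $e^{-c(t_m - s)}(t_m - s)^{-\vartheta_0}$ (respectively $(t_m - s)^{-\vartheta_1 - \vartheta_2}$) guaranteed by Lemma~\ref{lem:DU^N(t,X)(regularity)[weak4AC-25]}. Summing the three contributions gives $\mathcal{E}^{\mathrm{time}}_{\tau,N} \lesssim (1 + t_m^{-\iota}) \tau^\kappa$ after optimizing $\vartheta_0$ near $\kappa/2$.

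\textbf{Main obstacle.} The hardest step will be balancing the fractional exponents $\vartheta_0, \vartheta_1, \vartheta_2$ so that simultaneously (i) the Sobolev condition $\eta > d/2 \vee 1$ required by Lemma~\ref{lem:F(negativeSobolev)[weak4AC-25]} is satisfied in every dimension $d \leq 3$, (ii) the $\lambda_N^{\alpha\rho}$ growth appearing in \eqref{eq:assume_|F_tau(u)|_lambda[weak4AC-25]} does not spoil the $\lambda_N^{-\iota}$ rate (exactly where the design constraint $\alpha\rho < 1 - d/4$ in Assumption~\ref{assump:F_N,tau[weak4AC-25]} is decisive), and (iii) the singular kernels $(t_m - s)^{-\vartheta_0}$ and $(t_m - s)^{-\vartheta_1 - \vartheta_2}$ stay integrable against $e^{-c(t_m - s)}$ uniformly in $t_m$. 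Once this balance is fixed, the target rates $\kappa < \gamma \wedge \theta \wedge 1$ and $\iota < \gamma \wedge \rho \wedge 1$ emerge by taking $\vartheta_0, \vartheta_1, \vartheta_2$ slightly below their natural endpoints.
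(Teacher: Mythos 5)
Your overall architecture (Kolmogorov equation, It\^o formula, telescoping over time steps, negative Sobolev norms, and the uniform moment bounds of Theorem~\ref{thm:X^N,tau(bound)[weak4AC-25]}) is the right one, but two specific steps would fail to deliver the claimed rates. First, your spatial decomposition through $\nu^N$ applied along $P_N X(t)$ produces the boundary term $\E[\varphi(X(t_m))-\varphi(P_N X(t_m))]$, which you propose to handle ``directly by $\varphi\in\mathcal{C}_b^1$ and \eqref{eq:P_N(regularity)[weak4AC-25]}''. That only gives $\|\varphi\|_{\mathcal{C}_b^1}\,\lambda_{N+1}^{-\gamma/2}\,\E\|X(t_m)\|_{\gamma}$, i.e.\ the \emph{strong} spatial rate $\lambda_N^{-\gamma/2}$, whereas the theorem claims the weak rate $\lambda_N^{-\iota}$ with $\iota$ up to $\gamma\wedge\rho\wedge 1$ --- twice as fast. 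Beating $\lambda_N^{-\gamma/2}$ for this term would require moving a derivative off the increment (Malliavin integration by parts or a duality argument), which the paper deliberately avoids. The paper circumvents the term entirely: it introduces a finer Galerkin level $K>N$, bounds $|\E[\varphi(X(t_m))-\varphi(X^K(t_m))]|\le C(t_m,\varphi)\lambda_K^{-\gamma/2}$ by finite-time strong convergence and lets $K\to\infty$ so this contribution vanishes (no uniform-in-time control of $X$ and no weak rate needed there), and then recovers the $N$-level spatial error at the full weak rate inside the telescoping sum (the terms $I_{1,3}^{(i)}$, the noise terms in $I_2^{(i)}$, and the initial-data comparison $\nu^K(t_m,X_0^K)$ vs.\ $\nu^K(t_m,X_0^N)$), all estimated through the negative-Sobolev regularity of $D\nu^K$, $D^2\nu^K$ in Lemma~\ref{lem:DU^N(t,X)(regularity)[weak4AC-25]}. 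As a side effect, the paper never needs uniform-in-time moment or $\dot H^\gamma$-regularity bounds for the exact solution $X$, which your spatial integral would additionally require.

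Second, in the temporal part your term $\mathcal{I}_1^{b}$, isolating $F(X^{N,\tau}(s))-F(X^{N,\tau}(\lfloor s\rfloor_\tau))$, cannot be bounded at rate $\tau^{(\gamma+2\vartheta_0)/2}$ by combining Lemma~\ref{lem:F(negativeSobolev)[weak4AC-25]} with Lemma~\ref{lem:holder-full-discretization[weak4AC-25]}: the H\"older exponent in Lemma~\ref{lem:holder-full-discretization[weak4AC-25]} is $\tfrac{\gamma+\eta}{2}\wedge\tfrac12$, capped at $\tfrac12$ no matter how negative the norm, because the Wiener increment has only half-order temporal regularity in $L^p(\Omega)$. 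Your route therefore saturates at $\tau^{1/2}$ and misses the claimed $\tau^{\kappa}$, $\kappa<\gamma\wedge\theta\wedge 1$, in the trace-class case $\gamma=1$. The paper's fix (its term $I_{1,4}^{(i)}$) is a second-order Taylor expansion of $F$ around $X^{N,\tau}_{t_i}$: in the first-order term one inserts the explicit one-step formula for $X^{N,\tau}(t)-X^{N,\tau}_{t_i}$, so that the Wiener-increment contribution vanishes under the conditional expectation given $\mathcal{F}_{t_i}$ (since $D\nu^K(\cdot,X^{N,\tau}_{t_i})$ and $F'(X^{N,\tau}_{t_i})$ are $\mathcal{F}_{t_i}$-measurable), while the $(E_N(t-t_i)-I)X^{N,\tau}_{t_i}$ part yields $\tau^{\kappa}$ via semigroup smoothing and the $\dot H^\kappa$-bound of Theorem~\ref{thm:X^N,tau(bound)[weak4AC-25]}; the remaining second-order term is quadratic in the increment, so squaring the half-order H\"older bound recovers $\tau^{\gamma\wedge 1}$. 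Without this martingale cancellation your temporal estimate is genuinely short of the stated result. Your remaining subterms ($F-F_{\tau,N}$ via \eqref{eq:assume_|F_tau(u)-F(u)|[weak4AC-25]}, the $(E_N(s-\lfloor s\rfloor_\tau)-I)$ defect, and the trace discrepancy) do match the paper's $I_{1,2}^{(i)}$, $I_{1,1}^{(i)}$ and $I_2^{(i)}$ and are handled correctly.
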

\begin{proof}
By introducing the process $ X^{K}(t), t \geq 0$ of spectral Galerkin method \eqref{eq:spatial_disc[weak4AC-25]} that initiates at $ X_0^{K} := P_{K} X_0 \in H^K, N < K \in \N$, we decompose the weak error into the following two terms:
\begin{equation}
\begin{aligned}
&
    \Big| 
      \E\Big[ 
          \varphi ( X( t_m ) ) 
          -
          \varphi \big( X_{t_m}^{N,\tau} \big) 
     \Big] \Big| 
\\
&   \leq
    \Big| 
      \E\Big[
            \varphi ( X( t_m ) ) 
            - 
            \varphi\big( X^{K}( t_m ) \big) 
       \Big] 
    \Big|
    +
    \Big| 
      \E\Big[ 
           \varphi \big( X^{K}( t_m ) \big) 
           -
           \varphi \big( X_{t_m}^{N,\tau} \big) 
       \Big] 
     \Big|.
\end{aligned}
\end{equation}

The first term can be directly estimated by applying the well-established strong convergence with finite time horizon (see, e.g., \cite[Theorem 4.1]{QiWang2019optimal})
and taking the limit $K \rightarrow \infty$, i.e.,
\begin{equation}
    \Big| 
      \E\Big[
            \varphi ( X( t_m ) ) 
            - 
            \varphi\big( X^{K}( t_m ) \big) 
       \Big] 
    \Big|
\leq
C(t_m, \varphi) \lambda_K^{-\frac{\gamma}{2}}
\rightarrow 0,
\quad
K \rightarrow \infty.
\end{equation}
For the second term,
we recall
$\nu^{K}(t, x) = \mathbb E\left[ \varphi \big( X^{K} (t,x) \big)\right], t\geq 0, x\in H^{K}$ and we show
\begin{equation}
\begin{aligned}\label{eq:weak-error-decomp2}
         &
      \Big| 
        \E\Big[
           \varphi
           \big( X^{K}( t_m ) \big) 
           -
           \varphi
           \big( X_{t_m}^{N,\tau} \big) 
      \Big] \Big| \\
      &  =
      \Big| 
        \E\big[ 
             \nu^{K}
             \big( t_m , X_0^{ K } \big)
           \big]
            -
         \E\big[ 
             \nu^{K }
             \big( 0, X_{t_m}^{ N,\tau } \big) 
             \big]
         \Big|    \\
      &  \leq
         \Big| 
          \E\big[  
               \nu^{K } 
               \big( t_m, X_0^{ K } \big) 
            \big]
           -
           \E\big[  
              \nu^{K }
              \big( t_m, X_0^{ N} \big)
            \big] 
         \Big|
         +
         \Big| 
          \E\big[
              \nu^{K } 
              \big( t_m, X_0^{ N,\tau} \big) 
             \big]
           -
            \E\big[
                \nu^{K }
                \big( 0, X_{t_m}^{ N,\tau } \big)
             \big] 
         \Big|,
 \end{aligned}
\end{equation}
where,
for any $\iota \in [0, 1)$,
\begin{equation}
\begin{aligned}
    &
         \left| 
          \E\left[ 
            \nu^{K } 
            \big( t_m, X_0^{ K } \big) 
            \right]
           -
            \E\left[ 
            \nu^{K } 
            \big( t_m, X_0^{N} \big) 
            \right] 
         \right| \\
    &\leq
         \int_0^1
         \left| 
          \E\left[ 
           D \nu^{K } 
           \big( 
             t_m 
             , 
             r X_0^{ K } 
             +
             ( 1 - r ) X_0^{N}
           \big)
           \cdot
           \big( 
           X_0^{ K } - X_0^{N} 
           \big)
             \right]
         \right| 
         \,\mathrm{d} r  \\
    &\leq
         C 
         \Big(
           1 
           +
           \sup_{s \in [0, t_m] } 
           \E\left[ 
                \big\|
                  X^{ K }
                  \big( s, 
                       r X_0^{ K } 
                        +
                        ( 1 - r ) X_0^{N}  
                  \big) 
                 \big\|_V^{2q}
              \right]
          \Big)
          \left( 1 + t_m^{-\iota} \right)
          e^{-c t_m}
          \left\|
             P_{ K }
             ( I - P_N ) 
             X_0 
          \right\|_{-2\iota}  \\
    &\leq
         C ( X_0, Q, q, \varphi, \iota)
          \left( 1 + t_m^{-\iota} \right)
          \lambda_N^{- \iota}.
\end{aligned}
\end{equation}
Here we used Lemma \ref{lem:DU^N(t,X)(regularity)[weak4AC-25]} in the second inequality, and 
the property \eqref{eq:P_N(regularity)[weak4AC-25]} was employed in the last inequality, along with
Lemma \ref{lem:moment-bound-sptailDiscre[weak4AC-25]} and Assumption \ref{assump:X_0(Initial Value)[weak4AC-25]}.
To proceed with the remaining term in \eqref{eq:weak-error-decomp2}, we do the decomposition as follows
\begin{align}
&
         \left| 
          \E\left[
          \nu^{K }
          \big( t_m, X_0^{ N,\tau} \big)
          \right]
           -
           \E\left[
              \nu^{K } 
              \big( 0, X_{t_m}^{ N,\tau } \big)
           \right] 
         \right|
\notag\\
        &=
        \Bigg|
        \sum_{i=0}^{m-1}
        \underbrace{
          \E\left[
            \nu^{K} 
            \left( 
            t_m - t_{i+1}, X^{N,\tau}(t_{i+1}) 
            \right) 
            \right] 
         - 
          \E\left[
          \nu^{K}
          \left( t_m - t_i, X^{N,\tau}(t_{i}) 
          \right)
          \right] 
          }_{ =:I^{(i)} }
        \Bigg| .
\end{align}
Utilizing the It\^o formula and the Kolmogorov equation \eqref{eq:Kolmogorov_eq_for_U_N(spa)[weak4AC-25]} gives for all $i \in \{0,...,m-1\}$,
\begin{equation}
\begin{aligned}
I^{(i)} 
&= 
\mathbb E 
\Bigg[
      \int_{t_i}^{ t_{i+1} } 
        \partial_t \nu^{K}
        \left( t_m - t , X^{N,\tau}(t)  \right)
        \,\mathrm{d}t 
\\
& \qquad
      + 
      \int_{t_i}^{ t_{i+1} }
         D \nu^{K}
         \left( t_m - t , X^{N,\tau}(t) 
         \right) 
         .\left( 
                - A_N X^{N,\tau}(t) 
                +
                E_N (t - t_i) 
                P_N F_{\tau,N}
                \big( X^{{N,\tau}}_{t_i} \big)
          \right) 
       \,\mathrm{d}t  
\\
& \qquad 
      + 
      \frac{1}{2} 
      \int_{t_i}^{ t_{i+1} }
         D^2 \nu^{K}
         \left(
         t_m - t , X^{N,\tau}(t)  
         \right) 
         \sum_{1 \leq j \leq N, j \in \mathbb N} 
         \left(
           E_N(t-t_i) 
           P_N 
           Q^{\frac12} e_j
           , 
           E_N(t-t_i)
           P_N Q^{\frac12} e_j
         \right) 
      \,\mathrm{d}t
\Bigg] 
\\ 
&=
\mathbb E 
\Bigg[ 
 \int_{t_i}^{t_{i+1}}
        - D\nu^{K}
        \left( 
               t_m - t 
               , 
               X^{N,\tau}(t) 
             \right)
             .
             \left(
               -A X^{N,\tau}(t) 
               +
               P_{K} 
               F
               \big( 
               X^{N,\tau}(t)  
               \big)
       \right)
    \,\mathrm{d}t
\\
& \qquad
      -\frac{1}{2}
     \int_{t_i}^{t_{i+1}}
          \sum_{1 \leq j \leq K, j \in \mathbb{N} } 
         D^2 \nu^{K}
         \left(
           t_m - t 
           , 
           X^{N,\tau}(t)  
         \right)
          .
          \left(
            P_{K} 
            Q^{\frac12} e_j, 
            P_{K} 
            Q^{\frac12} e_j
          \right) 
     \,\mathrm{d}t 
\\
& \qquad
      + 
      \int_{t_i}^{ t_{i+1} }
         D \nu^{K}
         \left(
            t_m - t , X^{N,\tau}(t)  
         \right) 
         .
         \left( 
                - A_N X^{N,\tau}(t) 
                +
                E_N(t - t_i) 
                P_N F_{\tau,N}
                \big(
                X^{{N,\tau}}_{t_i} 
                \big)
          \right) 
       \,\mathrm{d}t  
\\ 
& \qquad  
      + 
      \frac{1}{2} 
      \int_{t_i}^{ t_{i+1} }
         D^2 \nu^{K}
         \left(
           t_m - t , X^{N,\tau}(t) 
         \right) 
         \sum_{1 \leq j \leq N, j \in \mathbb N} 
         \left(
            E_N( t - t_i ) 
            P_N Q^{\frac12} e_j, E_N(t-t_i) P_N Q^{\frac12} e_j
         \right) 
      \,\mathrm{d}t
\Bigg] 
\\  
&= \mathbb E
      \Bigg[ 
             \int_{t_i}^{t_{i+1}} 
             D \nu^{K} 
             \left( 
             t_m - t , X^{N,\tau}(t) 
             \right)
             . \left( 
                       E_N(t - t_i) 
                       P_N 
                       F_{\tau,N}
                    (X^{{N,\tau}}_{t_i} )
                       -
                       P_{K}
                       F \left( X^{N,\tau}(t) \right) 
                \right) 
              \,\mathrm{d}t 
        \Bigg]
\\  
&\quad
    + 
      \mathbb E
      \Bigg[  \frac12
              \int_{t_i}^{t_{i+1}}
              \sum_{j \in \mathbb N} 
              D^2 \nu^{K}
              \left( 
              t_m - t , X^{N,\tau}(t)  
              \right)
              .
              \Big( 
                \big(
                E_N( t-t_i ) P_N Q^{\frac12} e_j,
                E_N( t-t_i ) P_N Q^{\frac12} e_j
                \big)
\\&\qquad \quad  
                -
                \big(
                P_{K} Q^{\frac12} e_j,
                P_{K} Q^{\frac12} e_j
                \big)
              \Big) 
            \,\mathrm{d}t
       \Bigg]
\\
& =:  I_1^{ (i) } + I_2^{ (i) }.
\end{aligned}
\end{equation}
For the term $I_{1}^{(i)}$, we do the decomposition
$ I_1^{ (i) } = I_{1,1}^{ (i) } + I_{1,2}^{ (i) } + I_{1,3}^{ (i) } + I_{1,4}^{ (i) } $, where
\begin{align}
    I_{1,1}^{ (i) }
   & :=
    \mathbb E
      \Bigg[ 
             \int_{t_i}^{t_{i+1}} 
             D \nu^{K} 
             \left(
               t_m - t , X^{N,\tau}(t) 
              \right)
             . \left( 
                       E_N(t - t_i) - I )
                       P_N
                       F_{\tau,N}
                       \big( X^{{N,\tau}}_{t_i} 
                       \big)
                \right) 
              \,\mathrm{d}t 
        \Bigg] ,
\\
    I_{1,2}^{ (i) }
   & :=
     \mathbb E
      \Bigg[ 
             \int_{t_i}^{t_{i+1}} 
             D \nu^{K} 
             \left( t_m - t , X^{N,\tau}(t) \right)
             . \left( 
                       P_N
                       F_{\tau,N}
                       \big( X^{{N,\tau}}_{t_i} 
                       \big)
                       -
                       P_N F 
                       \big( X^{{N,\tau}}_{t_i} 
                       \big)
                \right) 
              \,\mathrm{d}t 
        \Bigg]  ,
\\
    I_{1,3}^{ (i) }
   & :=
   \mathbb E
      \Bigg[ 
             \int_{t_i}^{t_{i+1}} 
             D \nu^{K} 
             \left( 
             t_m - t , X^{N,\tau}(t) 
             \right)
             . \left( 
                       P_N F 
                       \big( X^{{N,\tau}}_{t_i} 
                       \big)
                       -
                       P_{K} F 
                       \big( X^{{N,\tau}}_{t_i} 
                       \big)
                \right) 
             \,\mathrm{d}t 
        \Bigg]  ,
\\
   I_{1,4}^{ (i) }
   & :=
    \mathbb E
      \Bigg[ 
             \int_{t_i}^{t_{i+1}} 
             D \nu^{K} 
             \left( t_m - t , X^{N,\tau}(t) 
             \right)
             . \left( 
                      P_{K} F  \big( X^{{N,\tau}}_{t_i} 
                      \big)
                       -
                       P_{K} F  \big( X^{N,\tau}(t) \big) 
                \right) 
            \,\mathrm{d}t 
        \Bigg].
\end{align}
We start by estimating the term $I_{1,1}^{ (i) }$. Using Lemmas \ref{lem:moment-bound-sptailDiscre[weak4AC-25]}, \ref{lem:DU^N(t,X)(regularity)[weak4AC-25]}, Theorem \ref{thm:X^N,tau(bound)[weak4AC-25]} and the property \eqref{eq:E(t)_semigroup_property[weak4AC-25]} yields that for any $\kappa \in [0, 1)$,
\begin{equation}\label{eq:I^i_11[weak4AC-25]}
\begin{aligned}
      \big| I_{1,1}^{ (i) } \big|
      & \leq
        C( Q, \kappa, \varphi )
        \int_{ t_i }^{ t_{i+1} }
        \left(
              1 
              +
              (t_m - t)^{-\kappa} 
        \right)
        e^{-c(t_m - t)}
        \bigg(
            1 + 
              \sup_{ s \in [ 0, t_m - t ] } 
              \E\left[ 
                \left\| 
                X^{K} 
                \big( s, X^{N, \tau}(t) \big) 
                \right\|_V^{2q} 
                \right]
        \bigg)
\\ &\qquad \qquad \qquad \qquad \qquad \qquad
        \cdot
        \E\left[
             \left\| 
               A^{-\kappa} 
               ( E( t - t_i ) - I ) 
               P_N
               F_{\tau,N}
               \big( X_{t_i}^{N,\tau} \big)
            \right\|
        \right]
        \,\mathrm{d}t
\\
      & \leq
        C( X_0 ,Q, q, \kappa, \varphi)
        \tau^{ \kappa }
        \int_{ t_i }^{ t_{i+1} }
        \left(
           1
           +
           (t_m - t)^{-\kappa} 
        \right)
        e^{-c (t_m - t)}
        \,\mathrm{d}t
    \cdot
        \E\left[
            \big\|
            F_{\tau,N}
            \big( X_{t_i}^{N,\tau} \big) 
            \big\|
        \right]
\\ 
      & \leq
        C( X_0 , Q, q, \kappa, \varphi )
        \tau^{ \kappa }
        \int_{ t_i }^{ t_{i+1} }
        \left(
           1 
           +
           (t_m - t)^{-\kappa} 
        \right)
        e^{ -c(t_m - t) }
            \,\mathrm{d}t
\cdot
        \left(
        1 +
        \E\left[
            \big\|
              X_{t_i}^{N,\tau} 
            \big\|_V^{ 2q-1 } 
          \right]
        \right) ,
\end{aligned} 
\end{equation}
where the assumptions \eqref{eq:assume_|F_tau(u)|_lambda[weak4AC-25]} and \eqref{eq:asuume_|f(u)|[weak4AC-25]} were used in the last inequality.
In the same manner, we get
\begin{equation}\label{eq:I^i_12[weak4AC-25]}
\begin{aligned}
    \big|  I_{1,2}^{ (i) }\big|
      & \leq
        C(X_0, Q,q, \varphi)
        \int_{ t_i }^{ t_{i+1} }
        e^{-c(t_m - t)}
        \,\mathrm{d}t
        \cdot
        \E\left[ 
            \left\| 
              P_N
              \left( 
              F_{\tau,N}
              \big( X^{{N,\tau}}_{t_i} \big)
                 -
               F
               \big( X^{{N,\tau}}_{t_i}
                 \big)
               \right) 
               \right\| \right]
\\
      & \leq
        C(X_0, Q,q,\varphi)
        (\tau^{\theta}+\lambda_N^{-\rho})
        \int_{ t_i }^{ t_{i+1} }
        e^{-c(t_m - t)}
        \,\mathrm{d}t
        \cdot
            \E\left[ 
            \Big(
               1 +
               \big\| X_{t_i}^{N,\tau} 
               \big\|_V^{ \widetilde{l} } 
             \Big)
          \big\|F \big(X_{t_i}^{N,\tau} \big) \big\|
             \right]
\\ 
      & \leq
        C(X_0, Q,q, \varphi)
        (\tau^{\theta}+\lambda_N^{-\rho})
        \int_{ t_i }^{ t_{i+1} }
        e^{-c(t_m - t)}
        \,\mathrm{d}t
        \cdot
        \left( 1 +
            \E\Big[ 
                 \big\| X_{t_i}^{N,\tau} 
                \big\|_V^{ \widetilde{l} +2q-1}
             \Big]
        \right)
,
\end{aligned}  
\end{equation}
where the assumption \eqref{eq:assume_|F_tau(u)-F(u)|[weak4AC-25]} was used in the second inequality.
With regard to the term $I_{1,3}^{ (i) }$,
by taking
Lemmas \ref{lem:moment-bound-sptailDiscre[weak4AC-25]}, \ref{lem:DU^N(t,X)(regularity)[weak4AC-25]}, Theorem \ref{thm:X^N,tau(bound)[weak4AC-25]} and the assumption \eqref{eq:asuume_|f(u)|[weak4AC-25]}
into account,
and further using the property \eqref{eq:P_N(regularity)[weak4AC-25]}, we
arrive at for all $\iota \in [0,1)$,
\begin{equation}\label{eq:I^i_13[weak4AC-25]}
\begin{aligned}
      \big|  I_{1,3}^{ (i) } \big|
        &
        \leq
        C(  X_0 ,Q, q, \iota, \varphi)
        \int_{ t_i }^{ t_{i+1} }
        \left( 
          1
          + 
          (t_m - t)^{-\iota}
        \right)
        e^{-c(t_m - t)}
        \E\left[
            \left\| 
               A^{-\iota} 
               P_{K} ( P_N - I ) 
               F
               \big( X^{{N,\tau}}_{t_i}
               \big) 
            \right\| \right]
        \,\mathrm{d}t
\\
        &
        \leq
        C(  X_0 ,Q, q, \iota, \varphi)
         \lambda_N^{-\iota}
        \int_{ t_i }^{ t_{i+1} }
        \left( 
           1 
           + 
           (t_m - t)^{-\iota} 
        \right)
        e^{-c(t_m - t)}
        \E\left[
          \big\| 
          F
          \big( X^{{N,\tau}}_{t_i} \big) 
          \big\| 
          \right]
        \,\mathrm{d}t
\\ 
        &
        \leq
        C(  X_0 ,Q, q, \iota, \varphi)
         \lambda_N^{-\iota}
        \int_{ t_i }^{ t_{i+1} }
        \left(  
           1 
           +
           ( t_m - t )^{-\iota}
        \right)
        e^{ -c( t_m - t ) }
        \left( 
          1 
          + 
          \E\Big[
          \big\| X^{{N,\tau}}_{t_i} \big\|_{ V}^{2q-1}\Big]
        \right)
        \,\mathrm{d}t.
\end{aligned}  
\end{equation}
We are now in a position to estimate
the term $I_{1,4}^{ (i) }$, which must be handled more carefully.
In view of \eqref{eq:D^2-U^N(t,X)(regularity)[weak4AC-25]},
Assumption \ref{assump:F(Nonlinearity)[weak4AC-25]},
Lemma \ref{lem:moment-bound-sptailDiscre[weak4AC-25]} and Theorem \ref{thm:X^N,tau(bound)[weak4AC-25]},
one obtains
\begin{equation}\label{eq:I^i_14[weak4AC-25]}
\begin{aligned}
&
\big| I_{1,4}^{ (i) } \big|
\leq
\Bigg|
    \mathbb E
      \Bigg[ \!
             \int_{t_i}^{t_{i+1}} 
             \!\!\!
             \Big(
             \!
             D \nu^{K} 
             \big(
             t_m - t
             ,
             X^{N,\tau}(t) 
             \big)
             -
             D \nu^{K} 
             \big(
             t_m - t 
             ,
             X^{N,\tau}_{t_i} 
             \big)
             \Big)
             \Big( 
                      P_{K} 
                      F \big( X^{{N,\tau}}_{t_i} 
                      \big)
                      -
                       P_{K}
                       F  \big( X^{N,\tau}(t) \big)
                \Big) 
            \mathrm{d}t 
        \Bigg] 
\Bigg|
\\
&\qquad
+
\Bigg|
   \mathbb E
      \Bigg[ 
             \int_{t_i}^{t_{i+1}} 
             D \nu^{K} 
             \left( t_m - t , X^{N,\tau}_{t_i} 
             \right)
             . \left( 
                      P_{K} F  \big( X^{{N,\tau}}_{t_i} 
                      \big)
                       -
                       P_{K} F  \big( X^{N,\tau}(t) \big) 
                \right) 
            \,\mathrm{d}t 
        \Bigg] 
\Bigg|
\\
&\leq
C (X_0, Q, q, \varphi)
\!\!
\int_{t_i}^{t_{i+1}}
\!\!\!\!
e^{-c(t_m-t)}
 \mathbb E
 \Big[  
 \big\|
 X^{N,\tau}(t)
 -
 X^{N,\tau}_{t_i} 
 \big\|^2
 \Big(
 1 \!
 + \!
 \big\|
X^{N,\tau}(t) 
\big\|_V^{2q-2}
\!
 +
 \big\|
X^{N,\tau}(t_i) 
\big\|_V^{2q-2}
\Big)
 \Big]
 \mathrm{d}t
 +
 J^{(i)}
\\
&\leq
C (X_0, Q, q, \varphi)
\tau^{\gamma \wedge 1}
\int_{t_i}^{t_{i+1}}
e^{-c(t_m-t)}
 \,
 \mathrm{d}t
 +
 J^{(i)},
\end{aligned}
\end{equation}
where we utilized the H\"older inequality and Lemma \ref{lem:holder-full-discretization[weak4AC-25]} in the last inequality and denote
\begin{equation}
    J^{(i)}
    :=
    \Bigg|
   \mathbb E
      \Bigg[ 
             \int_{t_i}^{t_{i+1}} 
             D \nu^{K} 
             \left( t_m - t , X^{N,\tau}_{t_i} 
             \right)
             . \left( 
                      P_{K} F  \big( X^{{N,\tau}}_{t_i} 
                      \big)
                       -
                       P_{K} F  \big( X^{N,\tau}(t) \big) 
                \right) 
            \,\mathrm{d}t 
        \Bigg] 
\Bigg|.
\end{equation}
Using the Taylor expansion we further split it as follows:
 \begin{equation}
 \begin{aligned}
  &   J^{(i)} 
     \leq
    \Bigg|
   \mathbb E
      \Bigg[ 
             \int_{t_i}^{t_{i+1}} 
             D \nu^{K} 
             \left( t_m - t , X^{N,\tau}_{t_i} 
             \right)
             . \left( 
                      P_{K} F'
                      \big(
                      X^{N,\tau}_{t_i} 
                      \big)
                      \left( 
                      X^{N,\tau}(t)
                      -
                      X^{N,\tau}_{t_i}
                      \right) 
                \right) 
            \,\mathrm{d}t 
        \Bigg] 
\Bigg|
\\
&
+
    \Bigg|
   \mathbb E
      \Bigg[ 
             \int_{t_i}^{t_{i+1}} 
             D \nu^{K} 
             \left( t_m - t , X^{N,\tau}_{t_i} 
             \right)
             . \left( 
             \int_0^1
                      P_{K} F''
                      ( \chi )
                      \Big(
                      X^{N,\tau}(t)
                      -
                      X^{N,\tau}_{t_i}
                      ,
                      X^{N,\tau}(t)
                      -
                      X^{N,\tau}_{t_i}
                      \Big)
                      (1 - r)
                      \mathrm{d}
                      r 
                \right) 
            \mathrm{d}t 
        \Bigg] 
\Bigg|
\\
&
=:
J^{(i)}_1
+
J^{(i)}_2,
 \end{aligned}   
 \end{equation}
where we denote 
$    \chi:= 
X^{N,\tau}_{t_i} 
                      +
                      r
                      (
                      X^{N,\tau}(t)
                      -
                      X^{N,\tau}_{t_i}
                      )
$ for simplicity.
For the term $J^{(i)}_1$, 
by noticing that 
$
 X^{N,\tau}(t)
=
E_N( t - t_i )
X^{N,\tau}_{t_i}
+
( t - t_i )
E_N( t - t_i )
P_N
F_{\tau,N}( X^{N,\tau}_{t_i} )
+
E_N( t - t_i )
(
W(t)
-
W(t_i)
),
$
and employing Lemmas \ref{lem:moment-bound-sptailDiscre[weak4AC-25]}-\ref{lem:F(negativeSobolev)[weak4AC-25]}
and Theorem \ref{thm:X^N,tau(bound)[weak4AC-25]}, one acquires
\begin{equation}\label{eq:J_1[weak4AC-25]}
\begin{aligned}
&
    J^{(i)}_1
\leq
    \Bigg|
   \mathbb E
      \Bigg[ 
             \int_{t_i}^{t_{i+1}} 
             D \nu^{K} 
             \left( t_m - t , X^{N,\tau}_{t_i} 
             \right)
             . \left( 
                      P_{K} F'
                      \big(
                      X^{N,\tau}_{t_i} 
                      \big)
                      \big(
                      (
                      E_N( t- t_i )
                      -
                      I
                      )
                      X^{N,\tau}_{t_i}
                      \big) 
                \right) 
            \,\mathrm{d}t 
        \Bigg] 
      \Bigg|
\\&\qquad
        +
      \Bigg|
       \mathbb E
      \Bigg[ 
             \int_{t_i}^{t_{i+1}} 
             D \nu^{K} 
             \left( t_m - t , X^{N,\tau}_{t_i} 
             \right)
             . \left( 
             ( t- t_i )
                      P_{K} F'
                      \big(
                      X^{N,\tau}_{t_i} 
                      \big)
                      E_N( t- t_i )
                      P_N
                     F_{\tau,N}
                     (X^{N,\tau}_{t_i})
                \right) 
            \,\mathrm{d}t 
        \Bigg]     
\Bigg|
\\
    &
    \leq
    C(X_0, Q, q, \eta, \varphi)
    \int_{t_i}^{t_{i+1}}
    e^{-c (t_m - t) }
    \big(
    1 + (t_m - t)^{-\frac{\eta}{2}} 
    \big)
       \mathbb E
       \Big[
        \big\|
               F'(X^{N,\tau}_{t_i})
               (
                      E_N( t- t_i )
                      -
                      I
                      )
                      X^{N,\tau}_{t_i}
              \big\|_{-\eta}
        \Big]
         \,\mathrm{d}t 
\\&\qquad
 +
      C(X_0, Q, q, \varphi)
      \tau
    \int_{t_i}^{t_{i+1}}
    e^{-c (t_m - t) }   
       \mathbb E
       \Big[
    \big\|F'(X^{N,\tau}_{t_i})\big\|_V
    \big\|F(X^{N,\tau}_{t_i})\big\|
    \Big]
    \,\mathrm{d}t 
\\
    &
    \leq
    C(X_0, Q, q, \eta, \varphi)
    \int_{t_i}^{t_{i+1}}
    e^{-c (t_m - t) }
    \big(
    1 + (t_m - t)^{-\frac{\eta }{2} } 
    \big)
       \mathbb E
\Big[
       \Big( 
       1
       +
       \big\| X^{N,\tau}_{t_i} \big\|_V^{2q-1}
       +
       \big\| X^{N,\tau}_{t_i} \big\|_\kappa^{2q-1}
       \Big)
      \big\| X^{N,\tau}_{t_i} \big\|_{\kappa}  
\\&\
\cdot
        \|
             A^{ -\kappa }
               ( E_N( t- t_i )
                 -
                 I
                )
         \|_{\mathcal{L}(H)}  
\Big]
         \,\mathrm{d}t 
 +
      C(X_0, Q, q, \varphi)
      \tau
    \int_{t_i}^{t_{i+1}}
    e^{-c (t_m - t) }   
       \Big(
       1 + 
       \mathbb E
       \Big[
    \|X^{N,\tau}_{t_i}\|_V^{4q-3}
    \Big]
\Big)
    \,\mathrm{d}t 
\\
&
\leq
      C(X_0, Q, q,\eta, \varphi)
      \tau^{\kappa}
    \int_{t_i}^{t_{i+1}}
    e^{-c (t_m - t) }    
      \big(
      1 + (t_m - t)^{-\frac{\eta}{2} } 
      \big)
    \,\mathrm{d}t ,
\end{aligned}
\end{equation}
where we set $  (\tfrac{d}{2} \vee 1)  < \eta <2 $,
$\kappa < \gamma \wedge 1$,
and also used the property \eqref{eq:E(t)_semigroup_property[weak4AC-25]}.
Moreover,
by using 
 Lemmas \ref{lem:holder-full-discretization[weak4AC-25]}-\ref{lem:DU^N(t,X)(regularity)[weak4AC-25]},
   Theorem \ref{thm:X^N,tau(bound)[weak4AC-25]}, Assumption \ref{assump:F(Nonlinearity)[weak4AC-25]} and the H\"older inequality,
for the term $J^{(i)}_2$ we deduce
\begin{equation}\label{eq:J_2[weak4AC-25]}
 \begin{aligned}
  J^{(i)}_2 
     &
     \leq
C(X_0, Q, q, \vartheta, \varphi)
 \int_{t_i}^{t_{i+1}} 
\big( 1 + (t_m - t)^{-\vartheta} \big)
e^{-c (t_m - t) }
\\&\qquad
\cdot
\left\| 
             \int_0^1
                      P_{K} F''
                      ( \chi )
                      \Big(
                      X^{N,\tau}(t)
                      -
                      X^{N,\tau}_{t_i}
                      ,
                      X^{N,\tau}(t)
                      -
                      X^{N,\tau}_{t_i}
                      \Big)
                      (1 - r)
                      \mathrm{d}
                      r 
                \right\|_{-2\vartheta}
          \,\mathrm{d}t
\\
     &
     \leq
C(X_0, Q, q, \vartheta, \varphi)
 \int_{t_i}^{t_{i+1}} 
\big( 1 + (t_m - t)^{-\vartheta} \big)
e^{-c (t_m - t) }
\\&\qquad
\cdot
   \mathbb E
      \Big[ 
            \big\|
             X^{N,\tau}(t)
             -
             X^{N,\tau}_{t_i}
            \big\|^2
        \big(
        1 
        + 
        \| X^{N,\tau}(t) \|_V^{2q-2}
        +
        \| X^{N,\tau}_{t_i} \|_V^{2q-2}
        \big)
        \Big] 
    \,\mathrm{d}t
\\
&
     \leq
C(X_0, Q, q, \vartheta, \varphi)
\tau^{\gamma \wedge 1}
 \int_{t_i}^{t_{i+1}} 
\big(
1 + (t_m - t)^{-\vartheta} 
\big)
e^{-c (t_m - t) }
    \,\mathrm{d}t,
 \end{aligned}   
 \end{equation}
for any
$\vartheta \in (\frac{d}{4}, 1)$.
Inserting \eqref{eq:J_1[weak4AC-25]}-\eqref{eq:J_2[weak4AC-25]} into \eqref{eq:I^i_14[weak4AC-25]}, 
gathering \eqref{eq:I^i_11[weak4AC-25]}-\eqref{eq:I^i_14[weak4AC-25]} together and utilizing Theorem \ref{thm:X^N,tau(bound)[weak4AC-25]}, one
concludes that for any $ \kappa < ( \gamma \wedge \theta \wedge 1 )$, $\iota < \rho \wedge 1$,
there exists $\delta \in (0,1)$ such that

\begin{equation}
\big|  I_1^{ (i) } \big|
    \leq 
    C(  X_0, Q, q, \varphi, \kappa, \iota ) 
    ( \tau^{\kappa }
      +
      \lambda_N^{ -\iota  }  )
    \int_{ t_i }^{ t_{i+1} }
        \left(
        1 + (t_m - t)^{-1 + \delta}
        \right)
        e^{-c(t_m - t)}
    \,\mathrm{d}t.
\end{equation}

For the term $I_2^{ (i) }, i \in \{0,...,m-1\}$,
we proceed in the same way as above.
Using Lemmas \ref{lem:moment-bound-sptailDiscre[weak4AC-25]}, \ref{lem:DU^N(t,X)(regularity)[weak4AC-25]}, Assumption \ref{assump:X_0(Initial Value)[weak4AC-25]} and Theorem \ref{thm:X^N,tau(bound)[weak4AC-25]},
we get for any $\varsigma \in \big[ (\gamma-1) \vee 0, \gamma \big)$, i.e., $0\leq 1-\gamma+\varsigma <1$, $\varsigma \geq 0$,
\begin{equation}
\begin{aligned}
&
\big| I_2^{ (i) } \big|
=  \Bigg| 
        \int_{t_i}^{t_{i+1}}
            \sum_{j \in \N} 
                \E \bigg[
                D^2 \nu^{K}
                \left( 
                t_m - t, X^{N,\tau}(t)
                \right) 
        \bigg(
           \Big( 
                E_N( t - t_i )
                P_N
                Q^{\frac12} e_j
                ,
                (  E_N( t - t_i )
                   P_N
                   -
                   P_{K}
                ) Q^{\frac12} e_j
           \Big)
\\
        & \qquad \qquad \qquad \qquad
        +
        \Big(  E_N( t - t_i )
                P_N
                Q^{\frac12} e_j
                ,
                P_{K}
                Q^{\frac12} e_j
           \Big) 
         -
        \Big(   P_{K}
                Q^{\frac12} e_j
                ,
                P_{K}
                Q^{\frac12} e_j
           \Big) 
        \bigg)
        \bigg] 
     \,\mathrm{d}t
   \Bigg| 
\\
& 
 \leq
       \Bigg| 
        \int_{t_i}^{t_{i+1}}
            \sum_{j \in \N} 
                \E \bigg[
                D^2 \nu^{K}
                \left( 
                t_m - t, X^{N,\tau}(t)
                \right) 
                \Big( 
                 E_N( t - t_i )
                P_N
                Q^{\frac12} e_j
                ,
                 (  E_N( t - t_i )
                   P_N
                   -
                   I
                )  
                 P_{K}
                Q^{\frac12} e_j
                \Big)
                          \bigg] 
            \,\mathrm{d}t
        \Bigg| 
\\ 
& \quad
    +
        \bigg|
        \int_{t_i}^{t_{i+1}} 
        \sum_{j \in \N} 
        \E 
        \Big[ D^2 \nu^{K}
        \big( t_m - t, X^{N,\tau}(t) \big)      
        \Big( 
           ( E_N( t - t_i ) P_N - I ) 
            P_{K}
           Q^{\frac12} e_j
           , 
           P_{K} 
           Q^{\frac12} e_j
        \Big) \Big] 
       \,\mathrm{d}t
        \bigg| 
\\
 & 
 \leq 
        C (Q, \varphi, \varsigma ) 
        \int_{t_i}^{t_{i+1}} 
\sum_{j \in \N} 
        e^{-c( t_m - t)}
  \bigg(
            1 + 
              \sup_{ s \in [ 0, t_m - t ] } 
              \E\left[ 
                \left\| 
                X^{K} 
                \big( s, X^{N, \tau}(t) \big) 
                \right\|_V^{8q-2} 
                \right]
        \bigg)
\notag
\\  &\qquad \qquad \qquad 
   \cdot
      \big( 
          1 + ( t_m - t)^{ - (1 - \gamma +\varsigma )}
        \big)
{\color{black}
        \big\|
          A^{- (\frac{1 -\gamma}{2} +\varsigma) }
          ( I - E_N( t - t_i ) ) Q^{\frac12} e_j
        \big\|
        \| A^{- \frac{1 -\gamma}{2} } Q^{\frac12} e_j \|
}
       \,\mathrm{d}t
\\
 & 
\leq
        C (X_0, Q,q, \varphi,\varsigma ) 
        \int_{t_i}^{t_{i+1}} 
        e^{-c( t_m - t)}
        \big( 
          1 + ( t_m - t)^{-(1 - \gamma +\varsigma )}
        \big)
        \big\| 
         A^{\frac{\gamma-1}{2}} Q^{\frac12}
       \big\|_{\mathcal{L}_2}^2 
        \big\|
          A^{- \varsigma }
          ( I - E_N( t - t_i ) ) 
        \big\|_{\mathcal{L}(H)} 
       \,\mathrm{d}t
\\
    & \leq 
        C (X_0, Q,q, \varphi,\varsigma ) 
           \left(
           \lambda_N^{-\varsigma }
           +
           \tau^{\varsigma} 
            \right) 
            \int_{t_i}^{t_{i+1}}
            e^{-c( t_m - t)}
            \left( 1 + ( t_m - t )^{ -1 + \gamma -\varsigma} 
            \right) 
           \,\mathrm{d}t,
\end{aligned}   
\end{equation}
where we used the fact
$ \|  A^{- \varsigma }
          ( I - E_N( t - t_i ) ) 
   \|_{\mathcal{L}(H)} 
\leq
\|  A^{- \varsigma } ( I - P_N ) 
 \|_{\mathcal{L}(H)} 
+
\|  A^{- \varsigma} ( I - E( t - t_i ) ) 
\|_{\mathcal{L}(H)} 
$ 
and the properties
\eqref{eq:E(t)_semigroup_property[weak4AC-25]}, \eqref{eq:P_N(regularity)[weak4AC-25]}
in the last inequality,
as well as Assumption \ref{assump:W(noise)[weak4AC-25]}. 
After summing over $i$ for both $I_1^{(i)}$ and $I_2^{(i)}$,
the desired result is thus obtained.
\end{proof}

By taking $\theta = \rho =1$ and $\alpha < 1- \tfrac{d}{4} $, the obtained weak convergence rates coincide with those in \cite{CUI2021weak} obtained for the backward Euler scheme. For example, in the space-time white noise case, the weak convergence rate is nearly order $O(\tau^{\frac12} + \lambda_N^{-\frac12})$ and in the trace-class noise case  the weak convergence rate is nearly order $O(\tau + \lambda_N^{-1})$.

In light of Theorem \ref{thm:uniform_weakerror[weak4AC-25]}, along with the exponential ergodicity of SPDE \eqref{eq:considered_SEE[weak4AC-25]} as established in Proposition \ref{prop:V-uniform_ergodicity_X(t)[weak4AC-25]}, the following corollary is immediately derived.

\begin{cor}\label{cor:convergence_full-invariant[weak4AC-25]}
   Let Assumptions \ref{assump:A(linear_operator)[weak4AC-25]}-\ref{assump:X_0(Initial Value)[weak4AC-25]} and Assumptions \ref{assump:F_N,tau[weak4AC-25]}, \ref{assume:contractive_or_non-degeneracy[weak4AC-25]}
   hold.
For $\mu$ being the unique invariant measure of SPDE \eqref{eq:considered_SEE[weak4AC-25]} and $ X_{t_m}^{N,\tau}, m \in \N_0$ defined by \eqref{eq:full_discretization[weak4AC-25]},
there exist constants $c>0$, $C(X_0, Q,q, \varphi, \kappa, \iota) > 0$ such that 
for any $\varphi \in \mathcal{C}_b^2(H)$,$\tau >0,N \in \N $ and large $M \in \N$, it holds
   \begin{equation}
       \left| 
       \E\left[
           \varphi( X_{t_M}^{N,\tau} )
       \right]
       -
       \int_{H} 
         \varphi 
       \,\mathrm{d}\mu 
        \right|
       \leq
       C(X_0, Q,q, \varphi, \kappa, \iota)
       \left( 
          \tau^{\kappa} 
          + 
          \lambda_N^{-\iota} 
          +
          e^{-c M \tau}
       \right),
   \end{equation}
with $\kappa \in (0, (\gamma \wedge \theta \wedge 1) )$, $\iota \in (0, (\gamma \wedge \rho \wedge 1) )$,
where 
$\gamma$ comes from Assumption \ref{assump:W(noise)[weak4AC-25]} and $\theta, \rho$ 
are method parameters coming from Assumption \ref{assump:F_N,tau[weak4AC-25]}.
\end{cor}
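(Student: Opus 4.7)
}]
The plan is to split the error using the triangle inequality into a weak-approximation piece at the finite horizon $t_M$ and an ergodic-decay piece, then invoke Theorem \ref{thm:uniform_weakerror[weak4AC-25]} and Proposition \ref{prop:V-uniform_ergodicity_X(t)[weak4AC-25]} respectively. More precisely, write
\begin{equation*}
    \Big|
        \E\big[\varphi(X_{t_M}^{N,\tau})\big]
        -
        \int_{H}\varphi\,\mathrm{d}\mu
    \Big|
    \leq
    \Big|
        \E\big[\varphi(X_{t_M}^{N,\tau})\big]
        -
        \E\big[\varphi(X(t_M))\big]
    \Big|
    +
    \Big|
        \E\big[\varphi(X(t_M))\big]
        -
        \int_{H}\varphi\,\mathrm{d}\mu
    \Big|.
\end{equation*}

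First I would handle the weak approximation term. Since $\varphi \in \mathcal{C}_b^2(H)$ and the initial value $X_0$ satisfies Assumption \ref{assump:X_0(Initial Value)[weak4AC-25]}, Theorem \ref{thm:uniform_weakerror[weak4AC-25]} applies, giving
\begin{equation*}
    \big|\E[\varphi(X(t_M))] - \E[\varphi(X_{t_M}^{N,\tau})]\big|
    \leq
    C(X_0, Q, q, \varphi, \kappa, \iota)\,(1 + t_M^{-\iota})\,(\tau^{\kappa} + \lambda_N^{-\iota}).
\end{equation*}
For $M$ large (say $t_M \geq 1$), the factor $(1 + t_M^{-\iota})$ is uniformly bounded by $2$, so this term is absorbed into $C(\tau^\kappa + \lambda_N^{-\iota})$.

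Next I would control the ergodic term. Since $\mu$ is invariant for the semigroup generated by \eqref{eq:considered_SEE[weak4AC-25]}, one has the identity $\int_H \varphi\,\mathrm{d}\mu = \int_H \E[\varphi(X(t_M, y))]\,\mathrm{d}\mu(y)$ for every $t_M \geq 0$. Therefore
\begin{equation*}
    \Big|\E[\varphi(X(t_M))] - \int_H \varphi\,\mathrm{d}\mu\Big|
    =
    \Big|\int_H \big(\E[\varphi(X(t_M, X_0))] - \E[\varphi(X(t_M, y))]\big)\,\mathrm{d}\mu(y)\Big|,
\end{equation*}
and applying Proposition \ref{prop:V-uniform_ergodicity_X(t)[weak4AC-25]} inside the integral yields the upper bound
\begin{equation*}
    C\,\|\varphi\|_{\mathcal{C}_b^1(H)}\,e^{-c t_M}
    \Big(1 + \E\|X_0\|^2 + \int_H \|y\|^2\,\mathrm{d}\mu(y)\Big),
\end{equation*}
where the second-moment integrability of $\mu$ follows from a standard argument (for instance, taking $\varphi(x) = \|x\|^2 \wedge R$ and using the moment bound for $X(t)$ together with invariance and Fatou's lemma, or directly from $\mu$ being supported on finite-moment data).

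Combining the two estimates gives the claimed bound. The only subtle point, which I would address explicitly in the write-up, is the integrability of $\|y\|^2$ under $\mu$; modulo that standard fact, the corollary is essentially an immediate consequence of the two invoked results, so no new analytical obstacle arises.
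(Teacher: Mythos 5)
Your proposal is correct and matches the paper's intended argument exactly: the paper states the corollary is "immediately derived" from Theorem \ref{thm:uniform_weakerror[weak4AC-25]} together with Proposition \ref{prop:V-uniform_ergodicity_X(t)[weak4AC-25]}, which is precisely the triangle-inequality decomposition you spell out. Your extra care about the factor $(1+t_M^{-\iota})$ for large $M$ and the second-moment integrability of $\mu$ only fills in details the paper leaves implicit.
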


In what follows, we prove that
the full-discretization scheme \eqref{eq:full_discretization[weak4AC-25]} possesses a unique invariant measure $\mu^{N, \tau}$ and thus the convergence rate between $\mu$ and $\mu^{N, \tau}$ is also obtained.

\begin{prop}\label{prop:invariant_measure_approximation[weak4AC-25]}
   Let Assumptions \ref{assump:A(linear_operator)[weak4AC-25]}-\ref{assump:X_0(Initial Value)[weak4AC-25]} and Assumptions \ref{assump:F_N,tau[weak4AC-25]}, \ref{assume:contractive_or_non-degeneracy[weak4AC-25]}
   hold.
 For $\tau \in (0, \tfrac{1}{2 \widetilde{c}_0})$ and the covariance operator $Q$ being invertible, the full-discretization scheme \eqref{eq:full_discretization[weak4AC-25]} is geometric ergodic, possessing a unique invariant measure $\mu^{N, \tau}$.
   Moreover, 
   there exists constant $C(X_0, Q,q, \varphi, \kappa, \iota) > 0$
   such that for $N \in \N $,
   for any $\varphi \in \mathcal{C}_b^2(H)$, it holds
   \begin{equation}
       \left| 
       \int_{H^N} 
          \varphi 
        \,\mathrm{d} \mu^{N, \tau}
       -
       \int_{H} 
          \varphi 
        \,\mathrm{d} \mu 
        \right|
       \leq
       C(X_0, Q, q, \varphi, \kappa, \iota)
       \left( 
          \tau^{\kappa} 
          + 
          \lambda_N^{-\iota} 
       \right),
   \end{equation}
with $\kappa \in (0, (\gamma \wedge \theta \wedge 1) )$, $\iota \in (0, (\gamma \wedge \rho \wedge 1) )$,
where 
$\gamma$ comes from Assumption \ref{assump:W(noise)[weak4AC-25]} and $\theta, \rho$ are method parameters coming from Assumption
\ref{assump:F_N,tau[weak4AC-25]}.
\end{prop}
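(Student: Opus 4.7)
The plan is to decouple the argument into two layers. First, I would establish existence, uniqueness and geometric ergodicity of an invariant measure $\mu^{N,\tau}$ for the Markov chain $\{X^{N,\tau}_{t_m}\}_{m\in\N_0}$ on the finite-dimensional subspace $H^N$. Second, I would quantify the distance between $\mu^{N,\tau}$ and $\mu$ by combining this ergodicity with the uniform-in-time weak error bound already supplied by Corollary \ref{cor:convergence_full-invariant[weak4AC-25]}.

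For the first layer, existence comes from the Krylov--Bogoliubov theorem applied in $H^N$: Theorem \ref{thm:X^N,tau(bound)[weak4AC-25]} supplies a uniform-in-$m$ bound of $X^{N,\tau}_{t_m}$ in $L^p(\Omega;\dot H^\kappa)$ for some $\kappa\in(0,\gamma)$, so the Cesaro averages of $\mathrm{Law}(X^{N,\tau}_{t_m})$ are tight in $H^N$ and any weak subsequential limit is an invariant probability measure $\mu^{N,\tau}$ supported in $H^N$. For uniqueness and geometric ergodicity, I would write one step as
\begin{equation*}
X^{N,\tau}_{t_{m+1}}=E_N(\tau)\bigl(X^{N,\tau}_{t_m}+\tau P_N F_{\tau,N}(X^{N,\tau}_{t_m})\bigr)+E_N(\tau)P_N\Delta W_m,
\end{equation*}
and observe that the Gaussian noise increment in $H^N$ has covariance $\tau E_N(\tau) P_N Q P_N E_N(\tau)^{*}$. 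The hypothesis $\|Q^{-1/2}(-A)^{-1/2}\|_{\mathcal L}<\infty$ together with the invertibility of $E_N(\tau)$ on $H^N$ makes this covariance strictly positive definite on $H^N$, so the one-step transition kernel has a continuous strictly positive Lebesgue density on $H^N$. This delivers strong Feller and full-support irreducibility, which together with Doob's theorem forces uniqueness of $\mu^{N,\tau}$. Geometric ergodicity then follows from Harris' theorem (or Meyn--Tweedie) applied with the Lyapunov function $V(x):=1+\|x\|^2$, whose drift inequality $\E[V(X^{N,\tau}_{t_{m+1}})\mid X^{N,\tau}_{t_m}=x]\leq \lambda V(x)+K$ with $\lambda<1$ arises from repeating the one-step computation behind Lemma \ref{lem:X^N,tau(L2_bound)[weak4AC-25]}, exploiting \eqref{eq:assume_(u+v)f_tau(u)[weak4AC-25]} and the stepsize restriction $\tau<1/(2\widetilde c_0)$; since sub-level sets of $V$ are compact in the finite-dimensional $H^N$, the Gaussian density yields a small-set minorization on them and Harris' conclusion applies. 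The output is constants $c>0$ and $C>0$ (both allowed to depend on $N,\tau$) such that
\begin{equation*}
\Bigl|\E[\varphi(X^{N,\tau}_{t_M})]-\int_{H^N}\varphi\,\mathrm d\mu^{N,\tau}\Bigr|\leq C(X_0,\varphi)\,e^{-cM\tau},\quad M\in\N.
\end{equation*}

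For the second layer, the quantitative bound between the two invariant measures is a triangle inequality: for every $M\in\N$,
\begin{equation*}
\Bigl|\int_{H^N}\varphi\,\mathrm d\mu^{N,\tau}-\int_H\varphi\,\mathrm d\mu\Bigr|\leq \Bigl|\int_{H^N}\varphi\,\mathrm d\mu^{N,\tau}-\E[\varphi(X^{N,\tau}_{t_M})]\Bigr|+\Bigl|\E[\varphi(X^{N,\tau}_{t_M})]-\int_H\varphi\,\mathrm d\mu\Bigr|.
\end{equation*}
The first summand is controlled by the geometric ergodicity from Step 1, while the second is bounded by $C(X_0,Q,q,\varphi,\kappa,\iota)(\tau^\kappa+\lambda_N^{-\iota}+e^{-cM\tau})$ via Corollary \ref{cor:convergence_full-invariant[weak4AC-25]}. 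Sending $M\to\infty$ eliminates the exponentially vanishing remainders and leaves exactly the advertised rate $\tau^\kappa+\lambda_N^{-\iota}$.

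The hard part will be the geometric ergodicity argument in Step 1, specifically verifying that the one-step noise covariance $E_N(\tau)P_N Q P_N E_N(\tau)^{*}$ is strictly positive definite on $H^N$ without relying on commutativity of $A$ and $Q$ (which is only assumed under the low-regularity regime $\gamma\leq d/2$). This should be deducible from $\|Q^{-1/2}(-A)^{-1/2}\|_{\mathcal L}<\infty$ combined with the equivalence of norms on the finite-dimensional space $H^N$ and the invertibility of $E_N(\tau)|_{H^N}$, but the argument needs to be spelled out carefully. The remaining ingredients---the Lyapunov drift, the small-set minorization from the Gaussian density, and the triangle-inequality assembly---all reduce to routine adaptations of tools already developed earlier in the paper.
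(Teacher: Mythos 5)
Your proposal is correct and takes essentially the same route as the paper: a one-step Lyapunov drift inequality for $\|X^{N,\tau}_{t_m}\|^2$ derived exactly as in Lemma \ref{lem:X^N,tau(L2_bound)[weak4AC-25]} via \eqref{eq:assume_(u+v)f_tau(u)[weak4AC-25]} and the restriction $\tau<1/(2\widetilde c_0)$, combined with the non-degeneracy of the additive Gaussian noise on $H^N$ to obtain geometric ergodicity, and then Corollary \ref{cor:convergence_full-invariant[weak4AC-25]} plus a triangle inequality with $M\to\infty$ to extract the rate $\tau^{\kappa}+\lambda_N^{-\iota}$. The only difference is cosmetic: the paper delegates the Harris/Meyn--Tweedie minorization step to cited references, whereas you spell it out (and your worry about positive definiteness of the one-step covariance is resolved directly by the proposition's standing assumption that $Q$ is invertible, which gives $\langle P_NQP_Nx,x\rangle=\langle Qx,x\rangle\geq c\|x\|^2$ on $H^N$).
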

\begin{proof}
    According to Corollary \ref{cor:convergence_full-invariant[weak4AC-25]}, it suffices to prove the geometric ergodicity of the full discretization schemes \eqref{eq:full_discretization[weak4AC-25]}. Indeed, using a similar strategy as in Lemma \ref{lem:X^N,tau(L2_bound)[weak4AC-25]}, one derives 
    \begin{equation}
        \begin{aligned}
            \big\| X^{N,\tau}_{t_{m+1}} \big\|^2
            & \leq
            \big\| X^{N,\tau}_{t_m} 
               +
               \tau 
               P_N
               F_{\tau,N}\big( X^{N,\tau}_{t_m} \big)
               +
               P_N \Delta W_m
             \big\|^2  
        \\
            & =
              \big\| 
              X^{N,\tau}_{t_m} 
              \big\|^2 
               +
              \tau^2 
              \big\|
              P_N F_{\tau,N}
              \big( X^{N,\tau}_{t_m} \big)
              \big\|^2
               +
              \| P_N \Delta W_m \|^2   
               +
              2 \tau 
              \langle
              X^{N,\tau}_{t_m}, 
              P_N F_{\tau,N}
              \big( X^{N,\tau}_{t_m} \big)
              \rangle
        \\
        &\qquad
              +
              2 
              \langle
              X^{N,\tau}_{t_m} , P_N\Delta W_m
              \rangle
               +
               2 \tau
               \langle
               P_N
               F_{\tau,N}
               \big( X^{N,\tau}_{t_m} \big),
               P_N \Delta W_m
               \rangle
 \\
     & \leq  (1 -2 \widetilde{c}_0 \tau)
              \big\| 
              X^{N,\tau}_{t_m} 
              \big\|^2 
               +
              \| P_N \Delta W_m \|^2   
              +
              2 
              \langle
              X^{N,\tau}_{t_m} , P_N\Delta W_m
              \rangle
               +
               2 \tau
               \langle
               P_N
               F_{\tau,N}
               \big( X^{N,\tau}_{t_m} \big),
               P_N \Delta W_m
               \rangle.
        \end{aligned}
    \end{equation}
In view of Theorem \ref{thm:X^N,tau(bound)[weak4AC-25]}, recalling the fact that $\sup_{m \in \N_0}\E[  \| X^{N,\tau}_{t_m} \|_V^p ] < \infty, p \geq 1$,
and taking the conditional expectation on both sides of the above inequality, one gets the following Lyapunov structure
\begin{align}
        \E
        \left[  
        \big\| X^{N,\tau}_{t_{m+1}} \big\|^2 
        \Big| \mathcal{F}_{t_m} 
        \right]
     & \leq  (1 -2 \widetilde{c}_0 \tau)
              \big\| 
              X^{N,\tau}_{t_m} 
              \big\|^2 
               +
               \lambda_N^{ \max( 1-\gamma ,0 ) } \|A^{ \frac{\gamma-1}{2} } Q^\frac12 \|^2_{\mathcal{L}_2} \tau  .
\end{align}
Additionally,
since the noise of \eqref{eq:full_discretization[weak4AC-25]} is additive and non-degenerate,
following arguments used in proofs of
\cite[Corollary 3.2]{liuliu2024unique} or \cite[Theorem 3.4]{liu-shen_2025geometric},
one can prove the geometric ergodicity of the full discretization scheme \eqref{eq:full_discretization[weak4AC-25]}.
The desired result follows immediately.
%
\end{proof}

\section{Numerical experiments}
\label{sec:numerical[weak4AC-25]}

In this section, we conduct numerical experiments to support the theoretical results established previously.
In what follows, we consider the following one-dimensional SPDE model:
\begin{equation}\label{eq:ACE_numeri[weak4AC-25]}
   \left\{\begin{array}{l}
\frac{\partial u}{\partial t} (t,x)
=
\frac{\partial^2 u}{\partial x^2} (t,x) + \sigma u(t,x) - u^3(t,x) + \dot{W}(t,x), 
\quad (t,x) \in (0,1]\times(0,1), 
\\
u(0, x) = \sin (\pi x), \quad x \in(0,1),
\\
u(t, 0) = u(t, 1) = 0, \quad t \in(0,1],
\end{array}\right.
\end{equation}
where $\{W(t)\}_{t \in[0, T]}$ is a cylindrical $Q$-Wiener process. 
%
%
In the space-time white noise case (i.e., $Q=I$), Assumption \ref{assump:W(noise)[weak4AC-25]} holds for $\gamma < 1 / 2$. For the trace-class noise case, by choosing $Q$ such that
$
    Q e_1=1, Q e_i=\tfrac{1}{1 + i \log (i)^2} e_i, \forall\  i \geq 2,
$
Assumption \ref{assump:W(noise)[weak4AC-25]},
as well as the non-degeneracy condition in Assumption \ref{assume:contractive_or_non-degeneracy[weak4AC-25]} hold for $\gamma =1$, which can by easily verified by following arguments in \cite[Example 5.3]{kruse2012optimal}.
Evidently, the model \eqref{eq:ACE_numeri[weak4AC-25]} satisfies Assumption \ref{assump:F(Nonlinearity)[weak4AC-25]} with $c_0 = 0.9, c_3 = 1.5$ and $L_f = \sigma$.
For the contractive case $L_f < \lambda_1$, we set $\sigma=1$, and $\sigma =10$ for the non-contractive case $L_f > \lambda_1$.
%
%
%
%
%
%
%
%
We test the scheme \eqref{eq:full_discretization[weak4AC-25]} with $f_{\tau_N}$ given by \eqref{eq:example_f_tau[weak4AC-25]}, where we take $\theta=1, \rho=1, \alpha = \tfrac14, \beta_1=\beta_2=1$.
Throughout the tests,
the expectation is approximated by averaging over $2000$ samples.

\begin{figure}[htp]   
  \centering     
   \subfloat[Long-time behaviors]  
  {
      \includegraphics[width=0.30\textwidth]{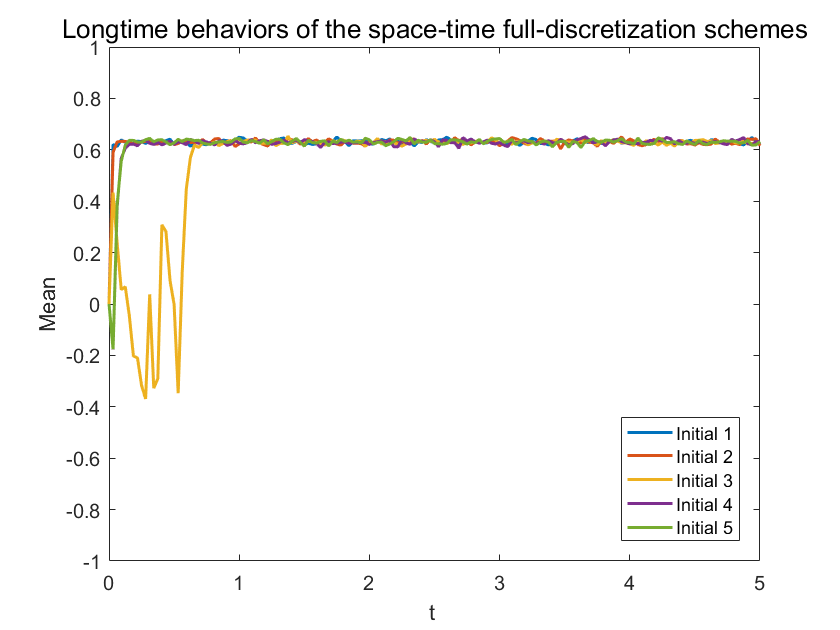}
  }      \hspace{3em}
    \subfloat[Probability densities]  
  {
      \includegraphics[width=0.30\textwidth]{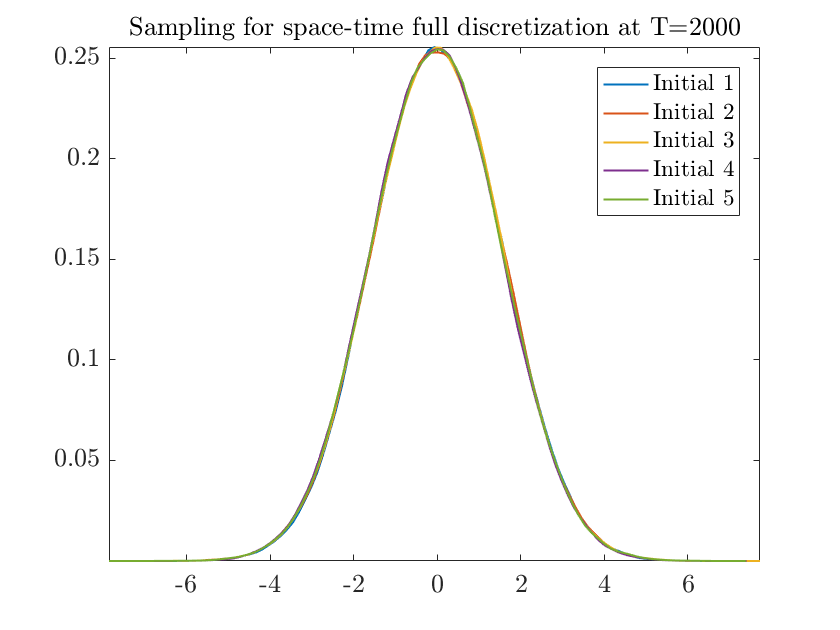}
  }
  \caption{Long-time behaviors and the probability densities for the scheme ($Q=I$)}
  \label{fig:longtime_and_pdf_ex2_I-Wiener[weak4AC-25]} 
\end{figure}

\begin{figure}[htp]   
  \centering     
   \subfloat[Long-time behaviors]  
  {
      \includegraphics[width=0.30\textwidth]{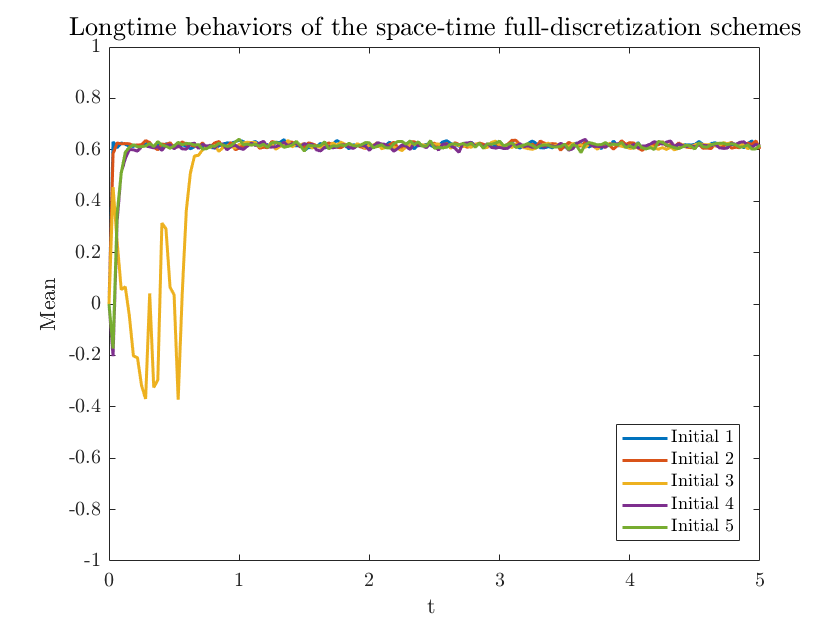}
  }
       \hspace{3em}
    \subfloat[Probability densities]  
  {
      \includegraphics[width=0.30\textwidth]{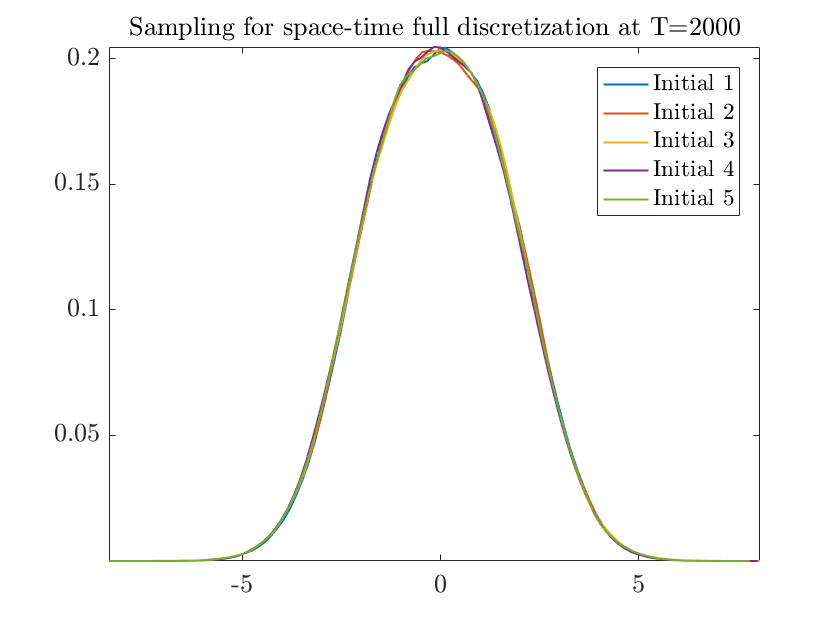}
  }
  \caption{Long-time behaviors and the probability densities for the scheme ($\mathrm{Tr}(Q) < \infty$)}
  \label{fig:longtime_and_pdf_ex2_Q-Wiener[weak4AC-25]} 
\end{figure}

Based on the spatial discretization \eqref{eq:spatial_disc[weak4AC-25]} with $N = 2^{6}$, we first test the long-time behaviors of the proposed scheme.
%
%
%
%
In Figure \ref{fig:longtime_and_pdf_ex2_I-Wiener[weak4AC-25]}
and 
Figure
\ref{fig:longtime_and_pdf_ex2_Q-Wiener[weak4AC-25]},
we show the averages $\E[ \sin ( \| X_m^{N, \tau} \|)]$
started from five different initial values,
where the former is for the contractive SPDE with $\sigma=1$ and driven by cylindrical $I$-Wiener process,
while the latter is for the non-contractive SPDE with $\sigma=10$ and driven by trace-class noise ($\mathrm{Tr}(Q) < \infty$).
As indicated in Figures \ref{fig:longtime_and_pdf_ex2_I-Wiener[weak4AC-25]}, \ref{fig:longtime_and_pdf_ex2_Q-Wiener[weak4AC-25]}, 
the averages $\E[ \sin ( \| X_m^{N, \tau} \|)]$ started
from different initial values converge to the same equilibrium in a short time.
In the same setting,
we also draw samplings for $X_{M}^{N, \tau}$ at $M = T \tau^{-1}$, $T=2000$,
by taking over $5000$ samples,
and depict the associated probability density functions for the first component of $X_{M}^{N, \tau}$ (see right pictures of Figures \ref{fig:longtime_and_pdf_ex2_I-Wiener[weak4AC-25]}, \ref{fig:longtime_and_pdf_ex2_Q-Wiener[weak4AC-25]}).

Next we test the weak convergence rates of the proposed scheme.
%
%
We simulate the weak errors at the endpoint $T=10$.
Particularly, the ``exact" solutions are computed by numerical solutions using sufficiently small stepsizes $N_{\text{exact}} =  2^{12}$ and $ M_{\text{exact}} = 2^{16}$.
%
As shown in Figure \ref{fig:weakerror[weak4AC-25]}, the weak errors of the space-time full-discretizations are depicted on a log-log scale, against $T/M$ with $M = 2^i, i = 9, ...,14$, using three different test functions.
In the left picture of Figure \ref{fig:weakerror[weak4AC-25]}, we show weak errors for the contractive SPDE with $\sigma=1$ and driven by cylindrical $I$-Wiener process. 
The right picture of Figure \ref{fig:weakerror[weak4AC-25]} indicates the weak errors for the non-contractive SPDE with $\sigma=10$ and driven by trace-class noise ($\mathrm{Tr}(Q) < \infty$).
%
%
%
%
%
%
It is shown that the weak convergence rate for the space-time white noise case
is close to $1/2$ in time,
while the weak rate for 
the trace-class noise case is almost $1$ in time.
All the above numerical results confirm the theoretical findings.


\begin{figure}[htp]   
  \centering 
  \subfloat[$Q=I$]
  {
  \includegraphics[width=0.30\textwidth]{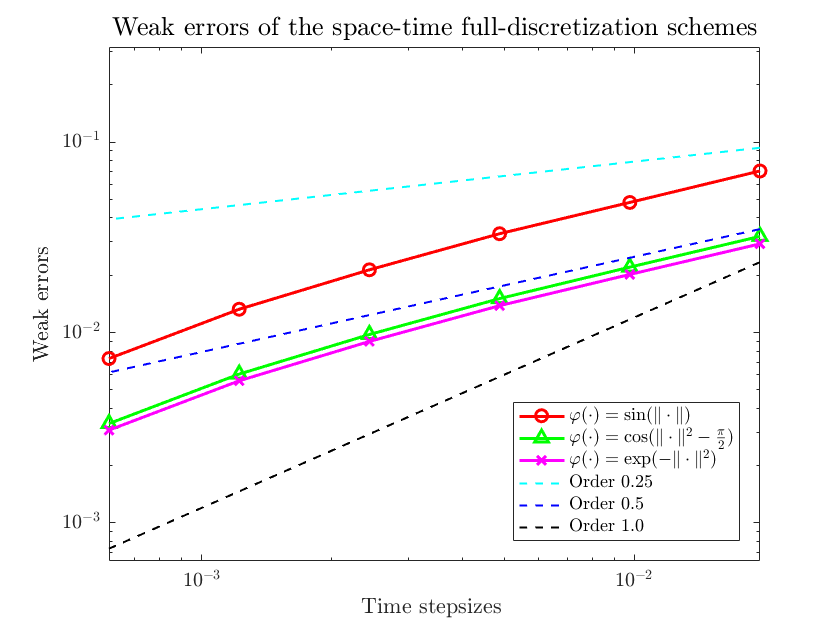}
  }
       \hspace{3em}
  \subfloat[$\mathrm{Tr}(Q) < \infty$]
  {
      \includegraphics[width=0.30\textwidth]{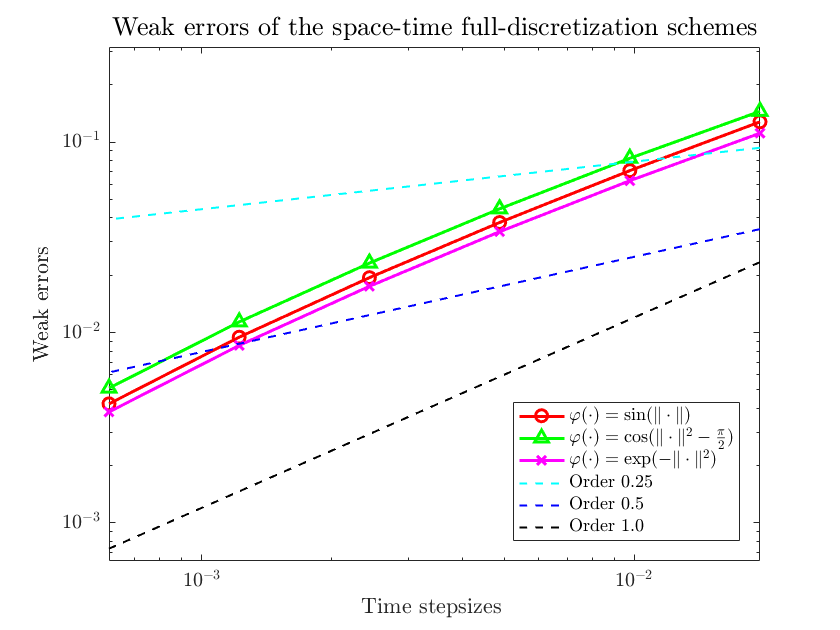}
  }
  \caption{Weak convergence rates
  }
  \label{fig:weakerror[weak4AC-25]} 
\end{figure}

\section{Conclusion and future work}
In this work, we introduce a class of novel full-discretization schemes for long time approximations of parabolic SPDEs. The fully discrete schemes are explicit, easily implementable, and preserve the ergodicity of the original dissipative SPDEs. By fully exploiting a contractive property of the semi-group and the dissipativity of the nonlinearity,
we obtain uniform-in-time moment bounds and uniform-in-time weak convergence rates of the proposed schemes. 
Approximations of the invariant measures are also examined.
We would like to mention that the time-stepping schemes can be also applied to finite element based approximations, whose analysis is, however, more involved and would encounter essential difficulties. This as well as total variation error bounds for the proposed time-stepping schemes are our ongoing projects. Long-time weak approximations of SPDEs with multiplicative noises are also on a list of our future works. 

\vskip6mm
\bibliography{ref}

\begin{thebibliography}{10}

\bibitem{andersson2016weak}
A.~Andersson and S.~Larsson.
\newblock {Weak convergence for a spatial approximation of the nonlinear
  stochastic heat equation}.
\newblock {\em Mathematics of Computation}, 85(299):1335–1358, 2016.

\bibitem{angeli2023uniform}
L.~Angeli, D.~Crisan, and M.~Ottobre.
\newblock {Uniform in time convergence of numerical schemes for stochastic
  differential equations via strong exponential stability: Euler methods,
  split-step and tamed schemes}.
\newblock {\em arXiv preprint arXiv:2303.15463}, 2023.

\bibitem{anton2020fully}
R.~Anton, D.~Cohen, and L.~Quer-Sardanyons.
\newblock {A fully discrete approximation of the one-dimensional stochastic
  heat equation}.
\newblock {\em IMA Journal of Numerical Analysis}, 40(1):247–284, 2020.

\bibitem{antonopoulou2021numerical}
D.~C. Antonopoulou, {\'L}.~Ba{\v{n}}as, R.~N{\"u}rnberg, and A.~Prohl.
\newblock {Numerical approximation of the stochastic Cahn–Hilliard equation
  near the sharp interface limit}.
\newblock {\em Numerische Mathematik}, 147(3):505–551, 2021.

\bibitem{Yan2024IMA_posteriori}
D.~C. Antonopoulou, B.~Egwu, and Y.~Yan.
\newblock {A posteriori error analysis of space-time discontinuous Galerkin
  methods for the $\varepsilon$-stochastic Allen–Cahn equation}.
\newblock {\em IMA Journal of Numerical Analysis}, 44(3):1862–1902, 2024.

\bibitem{becker2023strong}
S.~Becker, B.~Gess, A.~Jentzen, and P.~E. Kloeden.
\newblock {Strong convergence rates for explicit space-time discrete numerical
  approximations of stochastic Allen–Cahn equations}.
\newblock {\em Stochastics and Partial Differential Equations: Analysis and
  Computations}, 11(1):211–268, 2023.

\bibitem{brehier2014PA}
C.~E. Br{\'e}hier.
\newblock {Approximation of the invariant measure with an Euler scheme for
  stochastic PDEs driven by space-time white noise}.
\newblock {\em Potential Analysis}, 40(1):1–40, 2014.

\bibitem{brehier2022ESAIM}
C.~E. Br{\'e}hier.
\newblock {Approximation of the invariant distribution for a class of ergodic
  SPDEs using an explicit tamed exponential Euler scheme}.
\newblock {\em ESAIM: Mathematical Modelling and Numerical Analysis},
  56(1):151–175, 2022.

\bibitem{brehier2024FCM}
C.~E. Br{\'e}hier.
\newblock {Analysis of a modified regularity-preserving Euler scheme for
  parabolic semilinear SPDEs: Total Variation error bounds for the numerical
  approximation of the invariant distribution}.
\newblock {\em Foundations of Computational Mathematics}, page 1–76, 2024.

\bibitem{brehier2024SiamTV}
C.~E. Br{\'e}hier.
\newblock {Total variation error bounds for the accelerated exponential Euler
  scheme approximation of parabolic semilinear SPDEs}.
\newblock {\em SIAM Journal on Numerical Analysis}, 62(3):1171–1190, 2024.

\bibitem{Brehier_Allen_C}
C.~E. Br\'{e}hier and L.~Gouden\`{e}ge.
\newblock {Weak convergence rates of splitting schemes for the stochastic
  Allen–Cahn equation}.
\newblock {\em BIT}, 60(3):543–582, 2020.

\bibitem{Cai2021weak4ACE}
M.~Cai, S.~Gan, and X.~Wang.
\newblock {Weak convergence rates for an explicit full-discretization of
  stochastic Allen–Cahn equation with additive noise}.
\newblock {\em Journal of Scientific Computing}, 86:1–30, 2021.

\bibitem{cerrai2001second}
S.~Cerrai.
\newblock {\em {Second Order PDE’s in Finite and Infinite Dimension: A
  Probabilistic Approach}}.
\newblock Springer, 2001.

\bibitem{Chen_Dang_Hong2024adaptive_IMA}
C.~Chen, T.~Dang, and J.~Hong.
\newblock {Strong convergence of adaptive time-stepping schemes for the
  stochastic Allen–Cahn equation}.
\newblock {\em IMA Journal of Numerical Analysis}, 2024.

\bibitem{chen2020fullANM}
Z.~Chen, S.~Gan, and X.~Wang.
\newblock {A full-discrete exponential Euler approximation of the invariant
  measure for parabolic stochastic partial differential equations}.
\newblock {\em Applied Numerical Mathematics}, 157:135–158, 2020.

\bibitem{CUI2021weak}
J.~Cui, J.~Hong, and L.~Sun.
\newblock {Weak convergence and invariant measure of a full discretization for
  parabolic SPDEs with non-globally Lipschitz coefficients}.
\newblock {\em Stochastic Processes and their Applications}, 134:55–93, 2021.

\bibitem{da2006introduction}
G.~Da~Prato.
\newblock {\em {An Introduction to Infinite-Dimensional Analysis}}.
\newblock Springer Science \& Business Media, 2006.

\bibitem{Feng_AC}
X.~Feng, Y.~Li, and Y.~Zhang.
\newblock {Finite element methods for the stochastic Allen–Cahn equation with
  gradient-type multiplicative noise}.
\newblock {\em SIAM Journal on Numerical Analysis}, 55(1):194–216, 2017.

\bibitem{Goldys2006}
B.~Goldys and B.~Maslowski.
\newblock {\em {Stochastic Partial Differential Equations and
  Applications—VII, Exponential Ergodicity for Stochastic Reaction-Diffusion
  Equations, in: Lect. Notes Pure Appl. Math., vol. 245, pp. 115–131}}.
\newblock Chapman \& Hall/CRC, Boca Raton, FL, 2006.

\bibitem{HuangC_ShenJ2023Mathcomp}
C.~Huang and J.~Shen.
\newblock {Stability and convergence analysis of a fully discrete semi-implicit
  scheme for stochastic Allen–Cahn equations with multiplicative noise}.
\newblock {\em Mathematics of Computation}, 92(344):2685–2713, 2023.

\bibitem{hutzenthaler2012strong}
Martin Hutzenthaler, Arnulf Jentzen, and Peter~E Kloeden.
\newblock Strong convergence of an explicit numerical method for {SDEs} with
  nonglobally {Lipschitz} continuous coefficients.
\newblock {\em The Annals of Applied Probability}, 22(4):1611--1641, 2012.

\bibitem{Arnulf_strong4non_global_lips}
A.~Jentzen and P.~Pušnik.
\newblock {Strong convergence rates for an explicit numerical approximation
  method for stochastic evolution equations with non-globally Lipschitz
  continuous nonlinearities}.
\newblock {\em IMA Journal of Numerical Analysis}, 40(2):1005–1050, 04 2019.

\bibitem{Kruse2014}
R.~Kruse.
\newblock {\em {Strong and Weak Approximation of Semilinear Stochastic
  Evolution Equations}}.
\newblock Springer, 2014.

\bibitem{kruse2012optimal}
R.~Kruse and S.~Larsson.
\newblock Optimal regularity for semilinear stochastic partial differential
  equations with multiplicative noise.
\newblock {\em Electronic Journal of Probability}, 17(65):1–19, 2012.

\bibitem{kruse2023bdf2}
R.~Kruse and R.~Weiske.
\newblock {The BDF2-Maruyama method for the stochastic Allen–Cahn equation
  with multiplicative noise}.
\newblock {\em Journal of Computational and Applied Mathematics}, 419:114634,
  2023.

\bibitem{liuliu2024unique}
Z.~Liu and Z.~Liu.
\newblock {Unique ergodicity of stochastic theta method for monotone SDEs
  driven by nondegenerate multiplicative noise.}
\newblock {\em arXiv preprint arXiv:2401.01112}, 2024.

\bibitem{liu-shen_2025geometric}
Z.~Liu and J.~Shen.
\newblock {Geometric ergodicity and optimal error estimates for a class of
  novel tamed schemes to super-linear stochastic PDEs}.
\newblock {\em arXiv preprint arXiv:2502.19117}, 2025.

\bibitem{MajeeProhl_AC}
A.~K. Majee and A.~Prohl.
\newblock {Optimal strong rates of convergence for a space-time discretization
  of the stochastic Allen–Cahn equation with multiplicative noise}.
\newblock {\em Computational Methods in Applied Mathematics}, 18(2):297–311,
  2018.

\bibitem{neufeld2025non}
A.~Neufeld and Y.~Zhang.
\newblock {Non-asymptotic convergence bounds for modified tamed unadjusted
  Langevin algorithm in non-convex setting}.
\newblock {\em Journal of Mathematical Analysis and Applications},
  543(1):128892, 2025.

\bibitem{Pazy1983}
A.~Pazy.
\newblock {\em {Semigroups of Linear Operators and Applications to Partial
  Differential Equations}}.
\newblock Springer, 1983.

\bibitem{QiWang2019optimal}
R.~Qi and X.~Wang.
\newblock {Optimal error estimates of Galerkin finite element methods for
  stochastic Allen–Cahn equation with additive noise}.
\newblock {\em Journal of Scientific Computing}, 80:1171–1194, 2019.

\bibitem{sabanis2016euler}
S.~Sabanis.
\newblock {Euler approximations with varying coefficients: the case of
  superlinearly growing diffusion coefficients}.
\newblock {\em The Annals of Applied Probability}, 26(4):2083–2105, 2016.

\bibitem{thomee2007galerkin}
V.~Thom{\'e}e.
\newblock {\em {Galerkin Finite Element Methods for Parabolic Problems}},
  volume~25.
\newblock Springer Science \& Business Media, 2007.

\bibitem{wang2024IMA}
M.~Wang and X.~Wang.
\newblock {A linearly implicit finite element full-discretization scheme for
  SPDEs with nonglobally Lipschitz coefficients}.
\newblock {\em IMA Journal of Numerical Analysis}, 45(1):516–579, 2025.

\bibitem{wang2014DCDSweak}
X.~Wang.
\newblock {Weak error estimates of the exponential Euler scheme for semi-linear
  SPDEs without Malliavin calculus}.
\newblock {\em Discrete and Continuous Dynamical Systems}, 36(1):481–497,
  2016.

\bibitem{wang2020efficient}
X.~Wang.
\newblock {An efficient explicit full-discrete scheme for strong approximation
  of stochastic Allen–Cahn equation}.
\newblock {\em Stochastic Processes and their Applications},
  130(10):6271–6299, 2020.

\bibitem{wangyibo2024ACE}
Y.~Wang and W.~Cao.
\newblock {Approximation of the invariant measure for stochastic Allen–Cahn
  equation via an explicit fully discrete scheme}.
\newblock {\em arXiv preprint arXiv:2408.00953}, 2024.

\bibitem{zhang2025weak}
M.~Zhang, Y.~Zou, R.~Zhang, and Y.~Cao.
\newblock {Weak convergence analysis for the finite element approximation to
  stochastic Allen–Cahn equation driven by multiplicative white noise}.
\newblock {\em arXiv preprint arXiv:2503.17981}, 2025.

\end{thebibliography}

\end{document}